\newtheorem{theorem}{Theorem}[section]
\newtheorem{corollary}{Corollary}[section]
\newtheorem{remark}{Remark}[section]
\newtheorem{definition}{Definition}[section]
\newenvironment{proof}{\textit{Proof}:}{\hfill$\square$}
\numberwithin{equation}{section}
\begin{document}

	\title{\textbf{Lorentzian manifolds equipped with a concircularly semi-symmetric metric connection}}
	\date{}
	\author{\textbf{Miroslav D. Maksimovi\'c\footnote{Corresponding: miroslav.maksimovic@pr.ac.rs}, Milan Lj. Zlatanovi\'c, Milica R. Vu\v{c}urovi\'c}}
	\maketitle

	\begin{abstract}
		{\small Building upon previous works characterizing GRW space-times using concircular and torse-forming vectors, this paper investigates a Lorentzian manifold equipped with a concircularly semi-symmetric metric connection. We demonstrate that such a manifold reduces to a GRW space-time under specific conditions: when the generator of the observed connection is a unit timelike vector. Also, in that case, the mentioned connection becomes a semi-symmetric metric $P$-connection. The non-zero nature of the three curvature tensors and their corresponding Ricci tensors motivates an exploration of manifold symmetries. In this way, we derive necessary and sufficient conditions for the manifold to be Einstein and we prove that a perfect fluid space-time with a semi-symmetric metric $P$-connection is Ricci pseudo-symmetric manifold of constant type. Furthermore, we show that if this space-time satisfies the Einstein's field equations without the cosmological constant,  the strong energy condition is violated.

		}
		
		\vskip0.25cm
		\noindent\textbf{Keywords}: Lorentzian manifold, semi-symmetric connection, space-time, perfect fluid, GRW space-time.
		
		\vskip0.25cm
		\noindent\textbf{MSC 2020}: 53B05, 53B30, 53C05, 53C50, 83C05.
	\end{abstract}

	\section{Introduction}
	
	A Lorentzian manifold is a pseudo-Riemannian manifold with a Lorentzian metric $g$ of signature $(-,+,+,\dots,+)$ (or, equivalently, $(+,+,\dots,+,-)$). This manifold has been applied in the theory of relativity and cosmology, since space-time is a four-dimensional time-oriented Lorentzian manifold.  A generalized Robertson-Walker (briefly, GRW) space-time is a special class of Lorentzian manifolds and is defined in \cite{alias1995}. 
	
	\begin{definition}
		An $n$-dimensional Lorentzian manifold ($n\geq3$) is a generalized Robertson-Walker space-time if the metric $g$ has the form
		\begin{equation*}
			\mathrm{d}s^2=g_{ij}\mathrm{d}x^i \mathrm{d}x^j = -(\mathrm{d}t)^2 + f(t)^2 g^{*}_{\mu\nu}(\vec{x}) \mathrm{d}x^{\mu}\mathrm{d}x^{\nu},
		\end{equation*}
		where $t$ is time and $g^{*}_{\mu\nu}(\vec{x})$ is the metric of a Riemannian submanifold (of dimension $(n-1)$).
	\end{definition}
	Thus, a GRW space-time can be represented by the warped product $-I \times f^2 \mathcal{M}^{*}$, where $I$ is an open interval of real line $\mathbb{R}$, $\mathcal{M}^{*}$ is a Riemannian $(n-1)$-dimensional manifold and $f>0$ is a smooth function (or scale factor).
	
	If the metric $g^{*}$ has dimension 3 and constant curvature then a GRW space-time is actually a Robertson-Walker (briefly, RW) space-time. A GRW space-time is sometimes called a RW space-time of dimension $n>4$ (see \cite{manticamolinari2017}). Thus, GRW space-times extend a RW space-time and also encompass some other space-times, such as Lorentz-Minkowski, Einstein-de Sitter, de Sitter, Friedman cosmological model, etc.
	
	On a pseudo-Riemannian manifold, a non-zero vector $X$ is called \textit{timelike}, \textit{isotropic} (\textit{null}, \textit{lightlike}) and \textit{spacelike} if $g(X,X)<0$, $g(X,X)=0$ and $g(X,X)>0$, respectively.
	
	A \textit{torse-forming vector field} $P$ is defined in \cite{yano1944} as a vector that satisfies the relation
	\begin{equation*}
		\overset{g}{\nabla}_{X} P= \omega X + \eta(X)P,
	\end{equation*}
	where $\eta$ is an arbitrary 1-form and $\omega$ is a scalar function. In special cases, when $\eta=0$ then a torse-forming vector $P$ becomes a \textit{concircular vector field in Fialkow's sense} and if the 1-form $\eta$ is closed then a torse-forming vector field $P$ becomes a \textit{concircular vector field in Yano's sense} (see \cite{chaubey2022}). 
	By observing these vectors on Lorentzian manifolds, the following theorems were proved in the papers \cite{chen2014} and \cite{manticamolinari2017}.

	\begin{theorem}\cite{chen2014} 
		A Lorentzian manifold of dimension $n\geq3$ is a GRW space-time if and only if admits a timelike concircular vector field (in Fialkow's sense).
	\end{theorem}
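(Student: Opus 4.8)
The plan is to prove the two implications separately, handling the forward direction (a GRW space-time carries such a field) by direct computation and reserving the structural work for the converse.

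For the direction ``GRW $\Rightarrow$ timelike concircular vector'', I would work in the warped-product coordinates of the definition and take the candidate field $P=f(t)\,\partial_t$. Using O'Neill's connection formulas for a warped product $-I\times_{f^2}\mathcal{M}^*$ --- namely $\overset{g}{\nabla}_{\partial_t}\partial_t=0$ and $\overset{g}{\nabla}_{\partial_t}V=\overset{g}{\nabla}_V\partial_t=\tfrac{f'}{f}V$ for $V$ tangent to $\mathcal{M}^*$ --- a one-line computation in each of the cases $X=\partial_t$ and $X=V$ gives $\overset{g}{\nabla}_X P=f'(t)\,X$. Thus $P$ is concircular in Fialkow's sense with $\omega=f'$, and since $g(P,P)=f^2 g(\partial_t,\partial_t)=-f^2<0$ it is timelike. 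This settles one direction with no real obstacle.

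For the converse, suppose $P$ is a timelike concircular field, $\overset{g}{\nabla}_X P=\omega X$. First I would record that the dual $1$-form $P^\flat$ satisfies $(\overset{g}{\nabla}_X P^\flat)(Y)=\omega\,g(X,Y)$, which is symmetric in $X,Y$; hence $P^\flat$ is closed and locally $P=\mathrm{grad}\,\phi$. Setting $\rho=-g(P,P)>0$, differentiation gives $X(\rho)=-2\omega\,g(X,P)$, so $\mathrm{grad}\,\rho=-2\omega P$ is proportional to $\mathrm{grad}\,\phi$; consequently $\rho$, and then $\omega=-\tfrac12\rho'(\phi)$, is a function of $\phi$ alone. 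Next I would normalise $u=P/\sqrt{\rho}$ and compute $\overset{g}{\nabla}_X u=\tfrac{\omega}{\sqrt\rho}\,h(X)$, where $h(X)=X+g(X,u)u$ projects onto $u^\perp$. From this I read off three facts: $u$ is unit timelike, its integral curves are geodesics ($\overset{g}{\nabla}_u u=0$), and the orthogonal distribution $\mathcal{D}=u^\perp$ is integrable with $g(\overset{g}{\nabla}_X Y,u)=-\tfrac{\omega}{\sqrt\rho}g(X,Y)$ symmetric, so its leaves are totally umbilical.

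The decisive step is to upgrade this infinitesimal data to a local warped-product splitting. Because $\rho$ and $\omega$ are constant on the level sets of $\phi$, which are exactly the leaves of $\mathcal{D}$, the umbilicity factor $\omega/\sqrt\rho$ is constant along each leaf; hence the mean-curvature normal is parallel in the normal bundle and $\mathcal{D}$ is in fact a spherical distribution, while the timelike line field spanned by $u$ is autoparallel. By Hiepko's decomposition theorem --- or, equivalently, by introducing $t$ as arc length along the $u$-geodesics and dragging coordinates of a fixed leaf along the flow --- $M$ is locally isometric to the warped product $-I\times_{f^2}\mathcal{M}^*$ with $f=\sqrt\rho$; one then checks $u=\partial_t$ and $f'=u(\sqrt\rho)=\omega$, reproducing the GRW metric $\mathrm{d}s^2=-\mathrm{d}t^2+f(t)^2 g^*_{\mu\nu}\,\mathrm{d}x^\mu\mathrm{d}x^\nu$. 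I expect this passage --- verifying the spherical condition and invoking the warped-product recognition theorem --- to be the main obstacle, the earlier computations being routine.
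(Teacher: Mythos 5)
The paper does not prove this theorem: it is imported verbatim from \cite{chen2014}, so the only meaningful comparison is with Chen's original argument --- which your proof essentially reproduces. Your forward computation with $P=f(t)\,\partial_t$ and O'Neill's formulas is the standard one, and your converse (normalize $u=P/\sqrt{\rho}$, show that $\mathrm{span}(u)$ is autoparallel and that $u^{\perp}$ is integrable and totally umbilical with umbilicity factor $\omega/\sqrt{\rho}$ constant along the leaves, hence spherical, then invoke a warped-product decomposition theorem) is exactly the route taken in the cited source. The one point to tighten is the citation of Hiepko: his theorem is formulated for Riemannian metrics, so in the Lorentzian setting you should invoke its pseudo-Riemannian analogue (Ponge--Reckziegel) or fall back on your alternative direct construction of coordinates by flowing a fixed leaf along the $u$-geodesics; with that adjustment the proof is complete and correct.
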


	\begin{theorem}\cite{manticamolinari2017} 
		A Lorentzian manifold of dimension $n\geq3$ is a GRW space-time if and only if admits a unit timelike torse-forming vector field, $\overset{g}{\nabla}\pi = \omega (g +\pi\otimes\pi)$, which is also an eigenvector of the Ricci tensor.
	\end{theorem}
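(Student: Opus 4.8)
The plan is to prove both implications by reducing to Chen's theorem (the result stated immediately above), by setting up a dictionary between a unit timelike torse-forming field $\pi$ of the prescribed type, $\overset{g}{\nabla}_X\pi=\omega(X+g(\pi,X)\pi)$, and a timelike concircular vector field $P$ in Fialkow's sense, $\overset{g}{\nabla}_X P=\varphi X$. In both directions the bridge is a scalar rescaling $P=\mu\pi$ together with a careful analysis of the gradient of the defining function, the point being that the Ricci-eigenvector hypothesis is exactly what makes that rescaling possible.

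For the ``only if'' part I would start from a GRW space-time, invoke Chen's theorem to obtain a timelike concircular field $P$ with $\overset{g}{\nabla}_X P=\varphi X$, and set $\phi=\sqrt{-g(P,P)}$, $\pi=P/\phi$. Differentiating $g(P,P)=-\phi^2$ gives $X\phi=-\varphi\,g(\pi,X)$, and substituting into $\overset{g}{\nabla}_X(P/\phi)$ yields $\overset{g}{\nabla}_X\pi=(\varphi/\phi)\bigl(X+g(\pi,X)\pi\bigr)$ with $g(\pi,\pi)=-1$; thus $\pi$ is a unit timelike torse-forming field of exactly the prescribed form, with $\omega=\varphi/\phi$. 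To verify the eigenvector property I would compute $R(X,Y)P=(X\varphi)Y-(Y\varphi)X$ and then exploit the pair symmetry $g(R(P,Y)P,Z)=g(R(P,Z)P,Y)$; this forces $d\varphi\wedge P^\flat=0$, i.e.\ $\mathrm{grad}\,\varphi\parallel P$. Tracing $R(X,Y)P$ over a pseudo-orthonormal frame then gives $\mathrm{Ric}(Y,P)=-(n-1)Y\varphi$, which is proportional to $g(P,Y)$, so $P$, and hence $\pi$, is an eigenvector of the Ricci tensor.

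For the ``if'' part I would run the dictionary in reverse. Given a unit timelike $\pi$ with $\overset{g}{\nabla}_X\pi=\omega(X+g(\pi,X)\pi)$, I would compute $R(X,Y)\pi$ explicitly and take its trace to obtain $\mathrm{Ric}(Y,\pi)=-(n-2)Y\omega+\bigl(\pi\omega+(n-1)\omega^2\bigr)g(\pi,Y)$. The hypothesis that $\pi$ be a Ricci eigenvector kills the part that is not proportional to $g(\pi,Y)$, forcing $\mathrm{grad}\,\omega=-(\pi\omega)\pi$, that is $\mathrm{grad}\,\omega\parallel\pi$. Since the torse-forming relation and metricity give at once that $\pi^\flat$ is closed, it follows that $\eta=\omega\,\pi^\flat$ is closed (so $\pi$ is concircular in Yano's sense). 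I would then look for $\mu$ making $P=\mu\pi$ concircular in Fialkow's sense; expanding $\overset{g}{\nabla}_X(\mu\pi)$ shows this amounts to $d(\ln\mu)=-\omega\,\pi^\flat$, and the closedness of $\omega\,\pi^\flat$ guarantees a local solution $\mu$. Then $\overset{g}{\nabla}_X P=\mu\omega X$ with $g(P,P)=-\mu^2<0$, so $P$ is a timelike concircular field and Chen's theorem delivers the GRW structure.

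The main obstacle I expect is the ``if'' direction: correctly carrying out the trace of $R(X,Y)\pi$, keeping track of the signs $\epsilon_i=g(e_i,e_i)$ and of the identity $\sum_i\epsilon_i\,g(\pi,e_i)^2=g(\pi,\pi)=-1$, so as to isolate precisely the term that the eigenvector hypothesis must annihilate; note that this eigenvector condition is a genuine extra assumption, since it is not automatic for an arbitrary torse-forming $\pi$. The remaining delicate point is to pass from ``$\omega\,\pi^\flat$ closed'' to the existence of the rescaling factor $\mu$: as Chen's theorem is local in character, constructing $\mu$ on simply connected neighbourhoods and verifying the resulting concircular data are compatible suffices, but this should be stated rather than glossed over.
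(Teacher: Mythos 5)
This statement is quoted from \cite{manticamolinari2017} and the paper gives no proof of it, so there is no internal argument to compare against; your reduction to Chen's theorem via the rescaling $P=\mu\pi$ is, however, essentially the standard proof found in that reference. Your computations check out: in the ``only if'' direction the normalization $\pi=P/\phi$ with $X\phi=-\varphi\,g(\pi,X)$ does give $\overset{g}{\nabla}_X\pi=(\varphi/\phi)(X+g(\pi,X)\pi)$, and the pair symmetry applied to $R(X,Y)P=(X\varphi)Y-(Y\varphi)X$ correctly forces $\mathrm{grad}\,\varphi\parallel P$ and hence $\mathrm{Ric}(\cdot,P)\propto g(P,\cdot)$; in the ``if'' direction the trace formula $\mathrm{Ric}(Y,\pi)=-(n-2)Y\omega+\bigl(\pi\omega+(n-1)\omega^2\bigr)g(\pi,Y)$ is right (with $n-2\neq 0$ for $n\geq 3$), the torse-forming relation does make $\pi^\flat$ closed, and the eigenvector hypothesis then makes $\omega\,\pi^\flat$ exact locally, yielding the integrating factor $\mu$ with $d(\ln\mu)=-\omega\,\pi^\flat$ and the Fialkow-concircular field $P=\mu\pi$. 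The one genuine care point is the one you already flag: the integrating factor $\mu$ is only constructed on simply connected neighbourhoods, so as written you obtain the GRW structure locally; to get the global statement one must either invoke the local form of Chen's theorem (which is how the definition of GRW space-time is used in this paper) or argue that $\omega\,\pi^\flat$ is exact globally. Making that explicit would close the only gap.
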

	
	The study of manifold symmetries offers valuable insights into both the geometric and physical properties of these manifolds. We now introduce some specific types of manifold symmetries.
	Let $\overset{g}{\mathcal{R}}$ denote the Riemannian curvature tensor of type (0,4), and $\overset{g}{{R}}$ denote the Riemannian curvature tensor of type (1,3). For a $(0,k)$-tensor field $B$, $k\geq1$, we define the tensor fields $\overset{g}{\mathcal{R}}\cdot B$ and $Q(g,B)$ by the equations (for example, see \cite{defever1994})
	\begin{equation}\label{eq:RputaB}
		\begin{split}
			(\overset{g}{\mathcal{R}}\cdot B)(X_1,X_2,\dots,X_k;X,Y) & = (\overset{g}{R} (X,Y)\cdot B)(X_1,X_2,\dots,X_k) \\
			& = - \sum_{i=1}^{k} B(X_1,\dots,X_{i-1},\overset{g}{R} (X,Y)X_i,X_{i+1},\dots, X_k)
		\end{split}
	\end{equation}
	and 
	\begin{equation}\label{eq:QTachibana}
		\begin{split}
			Q(g,B)(X_1,X_2,\dots,X_k;X,Y) & = ((X\wedge_{g} Y)\cdot B)(X_1,X_2,\dots,X_k) \\
			& = - \sum_{i=1}^{k} B(X_1,\dots,X_{i-1},(X\wedge_{g} Y)X_i,X_{i+1},\dots, X_k),
		\end{split}
	\end{equation}
	where $(X\wedge_{g} Y)$ is endomorphism defined by
	\begin{equation*}
		(X\wedge_{g} Y)Z=g(Y,Z)X-g(X,Z)Y.
	\end{equation*}
	The tensor $Q$ is called Tachibana tensor. Several relations involving these tensors in the context of GRW space-times were investigated in \cite{arslan2014}. By Einstein manifolds (in relation to the Levi-Civita connection) we mean manifolds that satisfy equation
		\begin{equation*}
			\overset{g}{R}ic=\lambda g,
		\end{equation*}
		where $\lambda=const.$ for dimension $n>2$ and $\overset{g}{R}ic$ is Ricci tensor with respect to the Levi-Civita connection. Denoting the conformal curvature tensor of type (0,4) by $\overset{g}{\mathcal{C}}$, the following theorem holds.
	
	\begin{theorem}\cite{arslan2014}\label{thm:RgCg-CgRg}
		On any Einstein manifold, $n\geq 4$, we have
		\begin{equation*}
			\overset{g}{\mathcal{R}}\cdot	\overset{g}{\mathcal{C}} - \overset{g}{\mathcal{C}}\cdot \overset{g}{\mathcal{R}}=\frac{1}{n-1} Q(\overset{g}{R}ic, \overset{g}{\mathcal{R}}).
		\end{equation*}
		In particular, this equation holds on any Einstein GRW space-time.
	\end{theorem}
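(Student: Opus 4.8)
The plan is to exploit the drastic simplification of the conformal curvature tensor on an Einstein manifold and then to compare the two composite operators term by term. First I would record that where $\overset{g}{R}ic=\lambda g$ with $\lambda=\text{const}$, substituting $\overset{g}{R}ic=\lambda g$ and $r=n\lambda$ into the standard definition of the conformal curvature tensor and collecting the two purely metric contributions makes the $(1,3)$ conformal curvature collapse to
\begin{equation*}
	\overset{g}{C}(X,Y)Z = \overset{g}{R}(X,Y)Z - \frac{\lambda}{n-1}(X\wedge_{g}Y)Z,
\end{equation*}
the coefficient arithmetic producing exactly $-\lambda/(n-1)$. Equivalently, for the $(0,4)$ tensors $\overset{g}{\mathcal{C}}=\overset{g}{\mathcal{R}}-\frac{\lambda}{n-1}\mathcal{G}$, where $\mathcal{G}$ is the purely metric $(0,4)$ tensor corresponding to the endomorphism $X\wedge_{g}Y$ in the same way that $\overset{g}{\mathcal{R}}$ corresponds to $\overset{g}{R}$.

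Next I would stress the asymmetry hidden in the two products defined by \eqref{eq:RputaB}: in $\overset{g}{\mathcal{R}}\cdot\overset{g}{\mathcal{C}}$ the acting endomorphism is the Riemann curvature $\overset{g}{R}(X,Y)$ applied to the tensor $\overset{g}{\mathcal{C}}$, whereas in $\overset{g}{\mathcal{C}}\cdot\overset{g}{\mathcal{R}}$ the acting endomorphism is the conformal curvature $\overset{g}{C}(X,Y)$ applied to $\overset{g}{\mathcal{R}}$. Using linearity of the $\cdot$ operation in both arguments together with the Einstein reduction, I would expand
\begin{equation*}
	\overset{g}{\mathcal{R}}\cdot\overset{g}{\mathcal{C}} = \overset{g}{\mathcal{R}}\cdot\overset{g}{\mathcal{R}} - \frac{\lambda}{n-1}\,\overset{g}{\mathcal{R}}\cdot\mathcal{G},
	\qquad
	\overset{g}{\mathcal{C}}\cdot\overset{g}{\mathcal{R}} = \overset{g}{\mathcal{R}}\cdot\overset{g}{\mathcal{R}} - \frac{\lambda}{n-1}\,Q(g,\overset{g}{\mathcal{R}}),
\end{equation*}
where the second identity uses that replacing the acting endomorphism $\overset{g}{C}(X,Y)$ by its metric part $(X\wedge_{g}Y)$ in the derivation of $\overset{g}{\mathcal{R}}$ reproduces precisely the Tachibana tensor $Q(g,\overset{g}{\mathcal{R}})$ by its defining formula \eqref{eq:QTachibana}. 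Subtracting, the common term $\overset{g}{\mathcal{R}}\cdot\overset{g}{\mathcal{R}}$ cancels and the difference reduces to $\frac{\lambda}{n-1}\bigl[Q(g,\overset{g}{\mathcal{R}})-\overset{g}{\mathcal{R}}\cdot\mathcal{G}\bigr]$.

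The remaining, and main, step is the vanishing $\overset{g}{\mathcal{R}}\cdot\mathcal{G}=0$. I would prove it from the skew-symmetry of the endomorphism $\overset{g}{R}(X,Y)$ with respect to $g$ (the pair antisymmetry of the curvature tensor), which forces the induced derivation to annihilate the metric, $\overset{g}{\mathcal{R}}\cdot g=0$; since $\mathcal{G}$ is built algebraically out of $g$ and the derivation obeys the Leibniz rule over tensor products, it annihilates $\mathcal{G}$ as well. Concretely, applying $\overset{g}{R}(X,Y)$ slotwise to each metric factor of $\mathcal{G}$ and pairing the two contributions that hit a common factor leaves terms of the form $g(\overset{g}{R}(X,Y)U,V)+g(U,\overset{g}{R}(X,Y)V)=0$. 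This is the one place where genuine curvature symmetries, rather than mere linearity, enter, so it is the key obstacle. Finally, combining $\overset{g}{\mathcal{R}}\cdot\mathcal{G}=0$ with the Einstein identity $X\wedge_{\overset{g}{R}ic}Y=\lambda\,(X\wedge_{g}Y)$, whence $Q(\overset{g}{R}ic,\overset{g}{\mathcal{R}})=\lambda\,Q(g,\overset{g}{\mathcal{R}})$, converts $\frac{\lambda}{n-1}Q(g,\overset{g}{\mathcal{R}})$ into $\frac{1}{n-1}Q(\overset{g}{R}ic,\overset{g}{\mathcal{R}})$, which is the asserted formula. The last statement is then immediate, since an Einstein GRW space-time is in particular Einstein.
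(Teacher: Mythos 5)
The paper does not prove this statement at all: it is quoted verbatim from \cite{arslan2014} and used as a black box, so there is no internal argument to compare against. Your proposal supplies a correct, self-contained derivation, and every step checks out. The coefficient computation is right: substituting $\overset{g}{R}ic=\lambda g$, $\overset{g}{r}=n\lambda$ into the Weyl tensor gives $-\frac{2\lambda}{n-2}+\frac{n\lambda}{(n-1)(n-2)}=-\frac{\lambda}{n-1}$, so $\overset{g}{C}(X,Y)=\overset{g}{R}(X,Y)-\frac{\lambda}{n-1}(X\wedge_g Y)$ as endomorphisms. Since both $\overset{g}{\mathcal{R}}\cdot B$ and $Q(g,B)$ in \eqref{eq:RputaB}--\eqref{eq:QTachibana} are the derivations induced by the respective endomorphisms, linearity in the acting endomorphism gives your two expansions, and the key cancellation $\overset{g}{\mathcal{R}}\cdot\mathcal{G}=0$ follows exactly as you say from $g(\overset{g}{R}(X,Y)U,V)+g(U,\overset{g}{R}(X,Y)V)=0$ applied factorwise to the purely metric tensor; the final rewriting $Q(\overset{g}{R}ic,\overset{g}{\mathcal{R}})=\lambda\,Q(g,\overset{g}{\mathcal{R}})$ is immediate from $X\wedge_{\overset{g}{R}ic}Y=\lambda\,(X\wedge_g Y)$. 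What your route buys, compared with the paper's bare citation, is transparency about where each hypothesis enters: the Einstein condition is used only to collapse $\overset{g}{\mathcal{C}}$ to $\overset{g}{\mathcal{R}}$ minus a metric term, and the pair antisymmetry of the curvature is the sole genuinely geometric input; the identity is purely algebraic and pointwise, so it in fact holds wherever $\overset{g}{R}ic=\lambda g$ holds pointwise, with no need for $\lambda$ to be constant. The only presentational caveat is that the paper never formally defines $\overset{g}{\mathcal{C}}\cdot B$, so you should state explicitly (as you implicitly do) that it denotes the derivation induced by the endomorphism $\overset{g}{C}(X,Y)$ in the same pattern as \eqref{eq:RputaB}.
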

	
	A manifold satisfying $\overset{g}{\nabla}_{X}\overset{g}{\nabla}_{Y}\overset{g}{\mathcal{R}}-  \overset{g}{\nabla}_{Y}\overset{g}{\nabla}_{X}\overset{g}{\mathcal{R}}=0$, i.e. $\overset{g}{\mathcal{R}}\cdot \overset{g}{\mathcal{R}}=0$, is called \textit{semi-symmetric}. If, instead, it satisfies $\overset{g}{\mathcal{R}}\cdot \overset{g}{\mathcal{R}}=f_1Q(g,\overset{g}{\mathcal{R}})$, it is called \textit{pseudo-symmetric manifold}, where $f_1$ is a function on the set $\mathcal{U}_1=\{x \in \mathcal{M}| Q(g,\overset{g}{\mathcal{R}})(x)\ne 0\}$. It is important to note that the term "pseudo-symmetric manifold" is used in \cite{chaki1987} to describe manifolds satisfying the condition
	\begin{equation*}
		\begin{split}
			\overset{g}{\nabla}_{X}\overset{g}{\mathcal{R}} (X_1,X_2,X_3,X_4)= & 2\eta(X)\overset{g}{\mathcal{R}} (X_1,X_2,X_3,X_4) + \eta(X_1)\overset{g}{\mathcal{R}} (X,X_2,X_3,X_4) \\
			& + \eta(X_2)\overset{g}{\mathcal{R}} (X_1,X,X_3,X_4) + \eta(X_3)\overset{g}{\mathcal{R}} (X_1,X_2,X,X_4) \\
			& + \eta(X_4)\overset{g}{\mathcal{R}} (X_1,X_2,X_3,X),
	\end{split}\end{equation*}
	where $\eta$ is an arbitrary covector. This definition is not equivalent to the previous one. The relationship between these two distinct notions of pseudo-symmetry is discussed in \cite{shaikh2015}.
	
	A manifold satisfying $\overset{g}{\nabla}_{X}\overset{g}{\nabla}_{Y}\overset{g}{R}ic-  \overset{g}{\nabla}_{Y}\overset{g}{\nabla}_{X}\overset{g}{R}ic=0$, i.e. $\overset{g}{\mathcal{R}}\cdot \overset{g}{R}ic=0$, is called \textit{Ricci semi-symmetric}. If, instead, it satisfies $\overset{g}{\mathcal{R}}\cdot \overset{g}{R}ic=fQ(g,\overset{g}{R}ic)$, it is called \textit{Ricci pseudo-symmetric manifold}, where $f_2$ is a function on the set $\mathcal{U}_2=\{x \in \mathcal{M}| Q(g,\overset{g}{R}ic)(x)\ne 0\}$. When the function $f_2=const.$ the manifold is said to be a \textit{Ricci pseudo-symmetric manifold of constant type}. 
	
	Every pseudo-symmetric manifold is also Ricci pseudo-symmetric, but the converse does not hold \cite{deszcz1989}. Every Ricci semi-symmetric manifold is also Ricci pseudo-symmetric \cite{deszcz1989}. The relationships between these classes of manifolds of dimension $n\geq4$ are illustrated in the following diagram (see \cite{deszcz2023}):
	
	\vskip0.5cm
	\begin{tabular}{ccccc}
		\framebox[4.5cm]{$\overset{g}{\mathcal{R}}\cdot \overset{g}{R}ic = f_2 Q(g,\overset{g}{R}ic)$} & $\supset$  &  \framebox[4.5cm]{$\overset{g}{\mathcal{R}}\cdot \overset{g}{\mathcal{R}} = f_1 Q(g,\overset{g}{\mathcal{R}})$} & $\subset$ & \framebox[4.5cm]{$\overset{g}{\mathcal{R}}\cdot \overset{g}{\mathcal{C}} = f_3 Q(g,\overset{g}{\mathcal{C}})$}  \\
		$\cup$ &  & $\cup$ &  & $\cup$ \\
		\framebox[4.5cm]{$\overset{g}{\mathcal{R}}\cdot \overset{g}{R}ic = 0$} & $\supset$  &  \framebox[4.5cm]{$\overset{g}{\mathcal{R}}\cdot \overset{g}{\mathcal{R}} = 0$} & $\subset$ & \framebox[4.5cm]{$\overset{g}{\mathcal{R}}\cdot \overset{g}{\mathcal{C}} = 0$}  \\
		$\cup$ &  & $\cup$ &  & $\cup$ \\
		\framebox[4.5cm]{$\overset{g}{\nabla} \overset{g}{R}ic = 0$} & $\supset$  &  \framebox[4.5cm]{$\overset{g}{\nabla} \overset{g}{\mathcal{R}} = 0$} & $\subset$ & \framebox[4.5cm]{$\overset{g}{\nabla} \overset{g}{\mathcal{C}} = 0$}  \\
		$\cup$ &  & $\cup$ &  & $\cup$ \\
		\framebox[4.5cm]{$ \overset{g}{R}ic = \frac{\overset{g}{r}}{n} g$} & $\supset$  &  \framebox[4.5cm]{$\overset{g}{\mathcal{R}} = \frac{\overset{g}{r}}{2n(n-1)}g\wedge g$} & $\subset$ & \framebox[4.5cm]{$ \overset{g}{\mathcal{C}} = 0$}
	\end{tabular}\label{dijagram1}
	\vskip0.5cm
	
 In the early 1920s, E. Cartan introduced the torsion tensor and studied it in the theory of relativity \cite{cartan1923}. After that, A. Friedmann and J. A. Schouten \cite{friedmann1924} defined a semi-symmetric connection, which is also known as a connection with vectorial torsion (for example, see \cite{agricola2016}). In paper \cite{lehel2024}, the authors described the obtaining of some types of semi-symmetric connections using symmetric connections, gave their geometric properties and cosmological application. The study of semi-symmetric connections on Lorentzian manifolds has garnered significant attention in recent years \cite{li2023Ln,ucd2024,yilmaz2023,chaubeysuhde2020, chaubey2021, li2024Ln,chaubey2022b,suh2024}. This paper focuses on Lorentzian manifolds with a concircularly semi-symmetric metric connection $\overset{1}{\nabla}$. 
	
		In addition to the connection $\overset{1}{\nabla}$, we will also study its mutual and symmetric connection.	The mutual connection $\overset{2}{\nabla}$ of $\overset{1}{\nabla}$ is uniquely defined by
		\begin{equation}\label{eq:tdualconnectionnabla2} 
			\overset{2}{\nabla}_{X} Y = \overset{1}{\nabla}_{Y} X + [X,Y],\quad \text{or equivalently}\quad  \overset{2}{\nabla}_{X} Y = \overset{1}{\nabla}_{X} Y - \overset{1}{T}(X,Y).
		\end{equation}
		In \cite{ivanov2010}, the symbols $\nabla^{+}$ and $\nabla^{-}$ are used for the connections $\overset{1}{\nabla}$ and $\overset{2}{\nabla}$ (but authors considered connections with a totally skew-symmetric torsion). In \cite{iosifidis2024}, the author used the term "torsion dual connection" for the mutual connection.
		The connection $\overset{2}{\nabla}$ has the same torsion tensor as $\overset{1}{\nabla}$ (but with the opposite sign), i.e. holds
		\begin{align*} 
			\overset{1}{T}(X,Y) & = \overset{1}{\nabla}_{X} Y - \overset{1}{\nabla}_{Y} X - [X,Y] = -(\overset{2}{\nabla}_{X} Y - \overset{2}{\nabla}_{Y} X - [X,Y]) = -\overset{2}{T}(X,Y).
		\end{align*}
		From equation (\ref{eq:tdualconnectionnabla2}) we have
		\begin{equation*}
			\overset{1}{\nabla}_{X} Y - \overset{2}{\nabla}_{Y} X - [X,Y] =\overset{2}{\nabla}_{X} Y - \overset{1}{\nabla}_{Y} X   - [X,Y]= 0,
		\end{equation*}
		which means that in the parallel transfer of vectors $X$ and $Y$ an infinitesimal parallelogram is preserved (see the Figure \ref{fig:ttandmc}). To show the significance of introducing mutual connection $\overset{2}{\nabla}$ of $\overset{1}{\nabla}$, we first explain the geometrical interpretation of the mutual connection. Let us consider two vectors $X$ and $Y$ at the same point $p$ and with endpoints $q$ and $r$, respectively. If we perform a parallel displacement of a vector $X$ along $Y$ and a parallel displacement of a vector $Y$ along $X$ with respect to the connection $\overset{1}{\nabla}$, then we obtain the vectors $X^{||1}_r$ and $Y^{||1}_q$, respectively. If the vectors $X_p$, $Y_p$, $X^{||1}_r$ and $Y^{||1}_q$ do not form an infinitesimal parallelogram, then the infinitesimal "parallelogram" is closed using the torsion tensor $\overset{1}{T}$ (actually, in this case we have a pentagon). On the other hand, if we perform a parallel displacement of $X_p$ along $Y_p$ with respect to the connection $\overset{1}{\nabla}$ (see a vector $X^{||1}_r$), and a parallel displacement of $Y_p$ along $X_p$ with respect to the connection $\overset{2}{\nabla}$ (see a vector $Y^{||2}_q$) or, vice versa, a parallel displacement of $X_p$ along $Y_p$ with respect to the connection $\overset{2}{\nabla}$ (see a vector $X^{||2}_r$), and a parallel displacement of $Y_p$ along $X_p$ with respect to the connection $\overset{1}{\nabla}$ (see a vector $Y^{||1}_q$), then the infinitesimal parallelogram ($X_pY_pX^{||1}_rY^{||2}_q$ or $X_pY_pX^{||2}_rY^{||1}_q$, respectively) is closed \cite{graiff1952}.

		\begin{figure}[ht]
			\centering
			\includegraphics[width=9cm, height=8.8cm]{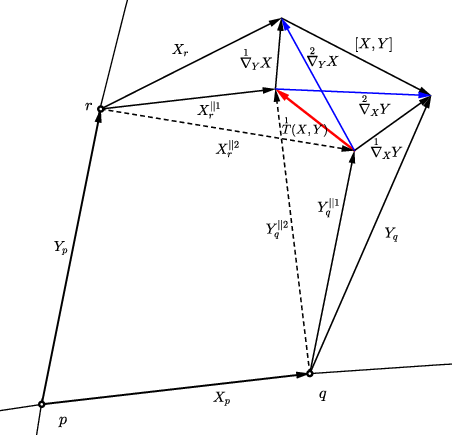}
			\caption{Geometrical interpretation of the torsion tensor $\overset{1}{T}$ and the mutual connection  $\overset{2}{\nabla}.$}
			\label{fig:ttandmc}
		\end{figure}
		
		Using the connection $\overset{1}{\nabla}$ and its mutual connection $\overset{2}{\nabla}$, the associated symmetric connection $\overset{0}{\nabla}$ is defined by the following equation
		\begin{equation*} 
			\overset{0}{\nabla}_{X} Y = \frac{1}{2} (\overset{1}{\nabla}_{X} Y + \overset{2}{\nabla}_{X} Y).
		\end{equation*}
		
		In addition to using the mutual connection to preserve the infinitesimal parallelogram during parallel transport of vectors, another motivation for considering such a connection is that it allows us to observe six linearly independent curvature tensors which are given by the following equations
		\begin{align*}
			\overset{\beta}{R} (X,Y)Z & = \overset{\beta}{\nabla}_{X}\overset{\beta}{\nabla}_{Y} Z - \overset{\beta}{\nabla}_{Y}\overset{\beta}{\nabla}_{X} Z - \overset{\beta}{\nabla}_{[X,Y]} Z, \; \beta=0,1,2, \\
			\overset{3}{R} (X,Y)Z & = \overset{2}{\nabla}_{X}\overset{1}{\nabla}_{Y} Z - \overset{1}{\nabla}_{Y}\overset{2}{\nabla}_{X} Z + \overset{2}{\nabla}_{\overset{1}{\nabla}_{Y} X} Z - \overset{1}{\nabla}_{\overset{2}{\nabla}_{X} Y} Z, \\
			\overset{4}{R} (X,Y)Z & = \overset{2}{\nabla}_{X}\overset{1}{\nabla}_{Y} Z - \overset{1}{\nabla}_{Y}\overset{2}{\nabla}_{X} Z + \overset{2}{\nabla}_{\overset{2}{\nabla}_{Y} X} Z - \overset{1}{\nabla}_{\overset{1}{\nabla}_{X} Y} Z, \\
			\overset{5}{R} (X,Y)Z & = \frac{1}{2} ( \overset{1}{\nabla}_{X}\overset{1}{\nabla}_{Y} Z - \overset{2}{\nabla}_{Y}\overset{1}{\nabla}_{X} Z + \overset{2}{\nabla}_{X}\overset{2}{\nabla}_{Y} Z - \overset{1}{\nabla}_{Y}\overset{2}{\nabla}_{X} Z  - \overset{1}{\nabla}_{[X,Y]} Z - \overset{2}{\nabla}_{[X,Y]} Z ).
		\end{align*}

		\begin{remark}
			In our previous investigation, we used the term "dual connection" instead of "mutual connection". However, to avoid confusion, the definition of mutual connection is different from the definition of dual connection that is usually used in statistical manifolds, where two connections $\nabla$ and $\nabla^*$ are said to be \emph{dual} if they satisfy the equation
			\[
			Xg(Y,Z) = g(\nabla_X Y,Z) + g(Y,\nabla^*_X Z).
			\]
		\end{remark}

	The structure of the paper is as follows: Section \ref{section2} establishes the necessary preliminaries concerning concircularly semi-symmetric metric connection on pseudo-Riemannian manifolds. Theorem \ref{thm:CSSm-SSmP} demonstrates that this connection is more general than the semi-symmetric metric $P$-connection. The equations for the linearly independent curvature tensors are also presented. Section \ref{section3} investigates Lorentzian manifolds with a concircularly semi-symmetric metric connection. We prove that, in this setting, a concircularly semi-symmetric metric connection reduces to a semi-symmetric metric $P$-connection. As a consequence, such a manifold is shown to be a GRW space-time (Corollary \ref{cor:CSSGRW}). Section \ref{section4} examines the properties of curvature tensors in a GRW space-time with a semi-symmetric metric $P$-connection. We prove that the three curvature tensors, as well as their corresponding Ricci tensors, are non-zero (Theorem \ref{thm:Rrazlicitiodnule} and Theorem \ref{thm:Ricirazlicitiodnule}). This motivates the study of the tensors $\overset{\theta}{\mathcal{R}}\cdot \overset{\theta}{R}ic$. In particular, we derive necessary and sufficient conditions for the manifold to be Einstein: the manifold is Einstein if and only if $ \overset{\alpha}{\mathcal{R}}\cdot \overset{\alpha}{R}ic=0$, $\alpha=0,4$, holds (Theorem \ref{thm:RalphaputaRic=0}).  Section \ref{section5} considers perfect fluid space-times. We prove that a perfect fluid space-time equipped with a semi-symmetric metric $P$-connection is Ricci pseudo-symmetric manifold of constant type (Theorem \ref{thm:Ricipseudosimetricankonst}) and satisfies the relation 
	$\overset{1}{\mathcal{R}}\cdot \overset{1}{R}ic=0$. Finally, section \ref{section6} explores implications for the theory of relativity. We demonstrate that if a perfect fluid space-time $(\mathcal{M},g,\overset{1}{\nabla})$ satisfies Einstein's field equations without the cosmological constant, then the strong energy condition is violated (Theorem \ref{thm:SEC}).

	

	
	\section{Concircularly semi-symmetric metric connection}\label{section2}

	Let $(\mathcal{M},g)$ be a pseudo-Riemannian manifold (dimension $n>2$) with a \textit{semi-symmetric metric connection} \cite{pak1969}
	\begin{equation}\label{eq:SSmc}
		\overset{1}{\nabla}_{X} Y = \overset{g}{\nabla}_{X} Y + \pi(Y) X - g (X,Y) P,
	\end{equation}
	whose torsion tensor is given by 
	\begin{equation*}
		\overset{1}{T}(X,Y)= \pi(Y) X - \pi(X)Y,
	\end{equation*}
	where $\pi$ is a covector, $P$ its associated vector such that $\pi(\cdot)=g(\cdot,P)$ and $\overset{g}{\nabla}$ denotes the Levi-Civita connection. Associated \textit{symmetric connection} $\overset{0}{\nabla}$ and \textit{mutual connection} $\overset{2}{\nabla}$ are defined by (see \cite{zlatanovic2021})
	\begin{align*}
		\overset{0}{\nabla}_{X} Y & = \overset{g}{\nabla}_{X} Y + \frac{1}{2}\pi(Y) X + \frac{1}{2}\pi(X) Y - g (X,Y) P,
		\\
		\overset{2}{\nabla}_{X} Y & = \overset{g}{\nabla}_{X} Y + \pi(X) Y - g (X,Y) P.
	\end{align*}

	A concircularly semi-symmetric metric connection is a special class of a semi-symmetric metric connection. Namely, if a 1-form $\pi$ satisfies equation
	\begin{equation}\label{eq:conditionforconcircularmapping}
		(\overset{g}{\nabla}_{X} \pi )(Y) - \pi(X)\pi(Y)=\omega g(X,Y),
	\end{equation}
	where $\omega$ is an arbitrary scalar function, then such a connection is said to be a \textit{concircularly semi-symmetric metric connection}\cite{slosarska1984}.
	\begin{remark}
		A conformal mapping (i.e. angle-preserving mapping) satisfying condition (\ref{eq:conditionforconcircularmapping}) is termed a concircular mapping. Concircular mappings are characterized by their preservation of  geodesic circles \cite{yano1940}.
	\end{remark}
	

	The covariant derivative of a covector $\pi$ with respect to the semi-symmetric metric connection (\ref{eq:SSmc}) satisfies the relation
	\begin{equation*}
		(\overset{1}{\nabla}_{X} \pi )(Y) = (\overset{g}{\nabla}_{X} \pi )(Y) - \pi(X)\pi(Y) + \pi(P) g(X,Y).
	\end{equation*}
	Substituting the condition  (\ref{eq:conditionforconcircularmapping}) into this expression, for a covariant derivative with respect to a concircularly semi-symmetric metric connection of a vector $P$ we obtain
	\begin{equation}\label{eq:nabla1XP}
		\overset{1}{\nabla}_{X} P = (\omega + \pi(P))X,
	\end{equation}
	from which the following statement follows. 
	\begin{theorem}
		A vector field $P$ is concircular in Fialkow's sense with respect to a concircularly semi-symmetric metric connection.
	\end{theorem}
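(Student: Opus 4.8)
The plan is to read off the result directly from the covariant-derivative identity (\ref{eq:nabla1XP}) established immediately above the statement. Recall that, by definition, a vector field $P$ is concircular in Fialkow's sense with respect to a connection $\nabla$ precisely when it is torse-forming with vanishing $1$-form part, i.e.\ when $\nabla_X P = \mu X$ for some scalar function $\mu$ and all vector fields $X$. Thus the whole task reduces to exhibiting $\overset{1}{\nabla}_X P$ in this form relative to the concircularly semi-symmetric metric connection $\overset{1}{\nabla}$.

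First I would invoke equation (\ref{eq:nabla1XP}), namely $\overset{1}{\nabla}_X P = (\omega + \pi(P))X$. This is exactly the required shape: the right-hand side is a scalar multiple of $X$ with no term proportional to $P$. I would then set $\mu := \omega + \pi(P)$ and note that $\mu$ is a genuine scalar function on $\mathcal{M}$, which is immediate since $\omega$ is a scalar function by the concircularity hypothesis (\ref{eq:conditionforconcircularmapping}) and $\pi(P) = g(P,P)$ is smooth. With this identification, $\overset{1}{\nabla}_X P = \mu X$ holds for every $X$, which is precisely the assertion that $P$ is concircular in Fialkow's sense with respect to $\overset{1}{\nabla}$.

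There is no substantive obstacle to overcome: all of the work is already contained in the derivation of (\ref{eq:nabla1XP}), which combines the transformation law $(\overset{1}{\nabla}_X \pi)(Y) = (\overset{g}{\nabla}_X \pi)(Y) - \pi(X)\pi(Y) + \pi(P)g(X,Y)$ with the concircularity condition (\ref{eq:conditionforconcircularmapping}) and the metric property of $\overset{1}{\nabla}$, so that $(\overset{1}{\nabla}_X \pi)(Y) = g(\overset{1}{\nabla}_X P, Y)$ and non-degeneracy of $g$ permits stripping off the test vector $Y$. The only point worth emphasising is conceptual: although $P$ is concircular with respect to the Levi-Civita connection only up to the self-interaction term encoded in (\ref{eq:conditionforconcircularmapping}), passing to the semi-symmetric metric connection $\overset{1}{\nabla}$ absorbs the $\pi(X)\pi(Y)$ contribution and yields a clean concircular vector, with the scale factor merely shifted from $\omega$ to $\omega + \pi(P)$.
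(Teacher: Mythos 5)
Your proposal is correct and follows essentially the same route as the paper: the paper also derives $\overset{1}{\nabla}_X P = (\omega + \pi(P))X$ by combining the transformation law for $(\overset{1}{\nabla}_X\pi)(Y)$ with the concircularity condition (\ref{eq:conditionforconcircularmapping}) and then reads off that the right-hand side has the Fialkow form $\mu X$ with no $P$-component. Your added remarks on the metric property of $\overset{1}{\nabla}$ and the non-degeneracy of $g$ simply make explicit the steps the paper leaves implicit.
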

	
	If a vector $P$ is parallel with respect to the connection $\overset{1}{\nabla}$, i.e. if $\overset{1}{\nabla} P=0$ holds, then such a connection is called a \textit{semi-symmetric metric $P$-connection} \cite{chaubey2019}. Equation (\ref{eq:nabla1XP}) shows that $\overset{1}{\nabla} P=0$ holds if and only if $\omega + \pi(P)=0$, which means that a semi-symmetric metric $P$-connection is a special case of a concircularly semi-symmetric metric connection.
	
	\begin{theorem}\label{thm:CSSm-SSmP}
		A concircularly semi-symmetric metric connection reduces to a semi-symmetric metric $P$-connection if and only if $\omega =-g(P,P)$.
	\end{theorem}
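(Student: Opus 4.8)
The plan is to read off the claim directly from equation (\ref{eq:nabla1XP}), which the excerpt has already established. Recall that a semi-symmetric metric $P$-connection is, by the definition given just before the statement, precisely the case in which $P$ is parallel with respect to $\overset{1}{\nabla}$, i.e. $\overset{1}{\nabla}P=0$. Equation (\ref{eq:nabla1XP}) expresses the covariant derivative as
\begin{equation*}
	\overset{1}{\nabla}_{X} P = (\omega + \pi(P))X,
\end{equation*}
so the vanishing of $\overset{1}{\nabla}P$ is an identity in the arbitrary vector field $X$ multiplied by the scalar $\omega+\pi(P)$.

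First I would argue the forward implication. Suppose the concircularly semi-symmetric metric connection is in fact a semi-symmetric metric $P$-connection, so that $\overset{1}{\nabla}_{X}P=0$ for every $X$. Substituting into (\ref{eq:nabla1XP}) gives $(\omega+\pi(P))X=0$ for all $X$; choosing $X\neq 0$ forces the scalar coefficient to vanish, so $\omega+\pi(P)=0$, i.e. $\omega=-\pi(P)$. Then I would invoke the defining relation $\pi(\cdot)=g(\cdot,P)$ to rewrite $\pi(P)=g(P,P)$, yielding $\omega=-g(P,P)$, as claimed.

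For the converse I would simply reverse the computation: assuming $\omega=-g(P,P)=-\pi(P)$, the coefficient $\omega+\pi(P)$ in (\ref{eq:nabla1XP}) is zero, hence $\overset{1}{\nabla}_{X}P=0$ for all $X$, which is exactly the condition $\overset{1}{\nabla}P=0$ defining a semi-symmetric metric $P$-connection. Both directions therefore hinge only on the non-degeneracy needed to cancel the arbitrary factor $X$ and on the identification $\pi(P)=g(P,P)$.

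There is essentially no obstacle here; the statement is a direct corollary of (\ref{eq:nabla1XP}) together with the definition of the $P$-connection, and the only thing to be careful about is the translation between $\pi(P)$ and $g(P,P)$ via the musical identification $\pi(\cdot)=g(\cdot,P)$. If anything, the one point worth stating explicitly is that the equivalence $\overset{1}{\nabla}P=0\iff \omega+\pi(P)=0$ is already noted in the text preceding the theorem, so the theorem is really just recording that the scalar condition $\omega+\pi(P)=0$ can be written geometrically as $\omega=-g(P,P)$.
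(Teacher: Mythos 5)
Your proposal is correct and follows exactly the paper's own argument: the paper also derives the theorem directly from equation (\ref{eq:nabla1XP}), observing that $\overset{1}{\nabla}P=0$ holds if and only if $\omega+\pi(P)=0$ and then rewriting $\pi(P)=g(P,P)$.
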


	In this paper, we continue our investigation of the curvature tensors of a concircularly semi-symmetric metric connection, initiated in \cite{mpvz2023}. 
	
	\begin{theorem} \cite{mpvz2023}
		Let $(\mathcal{M},g,\overset{1}{\nabla})$ be a pseudo-Riemannian manifold with a concircularly semi-symmetric metric connection.
		The curvature tensors $\overset{\theta}{R}$, $\theta=0,1,2,\ldots,5$, and the Riemannian curvature tensor $\overset{g}{R}$ are related by equations
		\begin{align}
			\label{eq:R0css}
			\overset{0}{R} (X,Y)Z  = &  \overset{g}{R} (X,Y)Z  +\frac{1}{2} (3\omega + \pi(P))(g(X,Z)Y - g(Y,Z) X) - \frac{1}{4}\pi(Z)(\pi(Y)X - \pi(X)Y),
			\\
			\label{eq:R1css}
			\overset{1}{R} (X,Y)Z  = & \overset{g}{R} (X,Y)Z + (2\omega + \pi(P))(g(X,Z)Y   - g(Y,Z) X),
			\\
			\label{eq:R2css}
			\overset{2}{R} (X,Y)Z  = & \overset{g}{R} (X,Y)Z + \omega(g(X,Z)Y   - g(Y,Z) X),
			\\
			\label{eq:R3css}
			\overset{3}{R} (X,Y)Z  = & \overset{g}{R} (X,Y)Z + \omega (2g(X,Z)Y  -g(Y,X) Z - g(Y,Z)X )
			+ \pi(P)(g (X,Z)Y - g (X,Y)Z),
			\\
			\label{eq:R4css}
			\begin{split}
				\overset{4}{R} (X,Y)Z  =  & \overset{g}{R} (X,Y)Z  + \omega (2g(X,Z)Y  -g(Y,X) Z - g(Y,Z)X ) 
				\\ & + \pi(P)(g (X,Z)Y - g (X,Y)Z)- \pi(Z)(\pi(Y)X - \pi(X)Y),
			\end{split}
			\\
			\label{eq:R5css}
			\overset{5}{R} (X,Y)Z  = & \overset{g}{R} (X,Y)Z + \frac{1}{2}(3\omega + \pi(P)) (g(X,Z)Y - g(Y,Z) X)  -\frac{1}{2}\pi(Y)(\pi(Z)X - \pi(X)Z).
		\end{align}
	\end{theorem}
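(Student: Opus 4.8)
The plan is to write every connection in the family as the Levi--Civita connection plus a correction term and then feed these expressions into the defining formulas for the curvature tensors. Concretely, set $\overset{1}{\nabla}_X Y = \overset{g}{\nabla}_X Y + A(X,Y)$ and $\overset{2}{\nabla}_X Y = \overset{g}{\nabla}_X Y + B(X,Y)$ with
\[
A(X,Y) = \pi(Y)X - g(X,Y)P, \qquad B(X,Y) = \pi(X)Y - g(X,Y)P,
\]
and note the crucial symmetry relation $B(X,Y) = A(Y,X)$, which encodes the fact that $\overset{2}{\nabla}$ is the mutual connection of $\overset{1}{\nabla}$. The connection $\overset{0}{\nabla}$ differs from $\overset{g}{\nabla}$ by the symmetric tensor $\tfrac12(A+B)$. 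Throughout I would use the two consequences of the concircular condition (\ref{eq:conditionforconcircularmapping}): namely $(\overset{g}{\nabla}_X \pi)(Y) = \pi(X)\pi(Y) + \omega g(X,Y)$ and, raising the index by metric compatibility, $\overset{g}{\nabla}_X P = \omega X + \pi(X)P$. These let me eliminate all covariant derivatives of $\pi$ and $P$ in favour of $\pi$, $\omega$ and $g$, and, importantly, no derivative $d\omega$ survives, since each curvature term differentiates $A$ or $B$ at most once.

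For $\beta = 0,1,2$ I would apply the standard change-of-connection formula
\[
\overset{\beta}{R}(X,Y)Z = \overset{g}{R}(X,Y)Z + (\overset{g}{\nabla}_X C)(Y,Z) - (\overset{g}{\nabla}_Y C)(X,Z) + C(X,C(Y,Z)) - C(Y,C(X,Z)),
\]
valid because $\overset{g}{\nabla}$ is torsion-free, where $C$ is the relevant correction tensor. Substituting $C = A$, inserting the concircular identities, and collecting the coefficients of $X$, $Y$, $Z$ and $P$ yields (\ref{eq:R1css}); the $P$-terms cancel identically, leaving only the stated multiple of $g(X,Z)Y - g(Y,Z)X$. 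The same computation with $C=B$ gives (\ref{eq:R2css}), and with $C = \tfrac12(A+B)$ gives (\ref{eq:R0css}), the additional $\pi$-quadratic terms there arising from the symmetric part of the correction.

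The mixed tensors $\overset{3}{R}$, $\overset{4}{R}$, $\overset{5}{R}$ require expanding their defining formulas term by term. Here I would first check that the purely Levi--Civita second-order contributions reassemble into $\overset{g}{R}(X,Y)Z$: for $\overset{3}{R}$, for instance, the four leading terms combine to $\overset{g}{\nabla}_X\overset{g}{\nabla}_Y Z - \overset{g}{\nabla}_Y\overset{g}{\nabla}_X Z + \overset{g}{\nabla}_{\overset{g}{\nabla}_Y X - \overset{g}{\nabla}_X Y}Z$, and torsion-freeness turns the last piece into $-\overset{g}{\nabla}_{[X,Y]}Z$. The remaining first-order terms split into a genuinely tensorial part $(\overset{g}{\nabla}_X A)(Y,Z) - (\overset{g}{\nabla}_Y B)(X,Z)$ plus several connection-mixing terms of the schematic form $A(\overset{g}{\nabla}_\bullet\bullet,Z)$ and $B(\bullet,\overset{g}{\nabla}_\bullet\bullet)$; using $A(Y,X) = B(X,Y)$ these cancel in pairs. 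What is left, after inserting the concircular identities and the quadratic products $A(\cdot,A(\cdot,\cdot))$, $B(\cdot,B(\cdot,\cdot))$, and their cross terms, collapses to the stated right-hand sides of (\ref{eq:R3css})--(\ref{eq:R5css}).

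The main obstacle is purely the combinatorial bookkeeping in the mixed cases: because $\overset{3}{R}$, $\overset{4}{R}$, $\overset{5}{R}$ are built from compositions of two \emph{different} connections together with non-standard correction terms such as $\overset{2}{\nabla}_{\overset{1}{\nabla}_Y X}Z$, one must track a large number of first-order terms and verify precisely which ones telescope. The relation $B(X,Y) = A(Y,X)$ is the single fact that makes this tractable, and the final sanity check in each case is that all coefficients of $P$ vanish, leaving an expression built only from $g$, $\pi$, $\omega$ and $\pi(P)$.
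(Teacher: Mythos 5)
Your proof strategy is correct: the change-of-connection formula you quote is valid, substituting $C=A$, $C=B$, $C=\tfrac12(A+B)$ does reproduce (\ref{eq:R1css}), (\ref{eq:R2css}), (\ref{eq:R0css}), and in the expansions of $\overset{3}{R}$, $\overset{4}{R}$, $\overset{5}{R}$ the first-order mixing terms indeed cancel in pairs while the lower-slot terms $\overset{g}{\nabla}_{A(Y,X)}Z$ and $\overset{g}{\nabla}_{B(X,Y)}Z$ cancel exactly because $B(X,Y)=A(Y,X)$, after which the remaining derivative and quadratic terms collapse to the stated right-hand sides (I verified (\ref{eq:R3css})--(\ref{eq:R5css}) explicitly, including the vanishing of all $P$-coefficients). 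Note that the present paper does not itself prove this theorem---it is imported from \cite{mpvz2023}---so there is no in-paper proof to compare against; your argument is the natural direct computation that such a proof consists of, stated here in coordinate-free form.
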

	Curvature tensors of type (0,4) we will denote by $\overset{\theta}{\mathcal{R}}$, i.e.
	\begin{align*}
		\overset{\theta}{\mathcal{R}} (X,Y,Z,W) & =g(\overset{\theta}{R} (X,Y)Z,W), \; \; \theta=0,1,\dots,5.
	\end{align*}	
	
	In \cite{mpvz2023}, conditions for the invariance of the Riemannian curvature tensor under some connection transformations were determined. For example, if $\overset{g}{R}$ is invariant under transformation $\overset{g}{\nabla} \rightarrow \overset{0}{\nabla}$, then $\overset{g}{R} = \overset{0}{R}$ holds, and using equation (\ref{eq:R0css}) we have
	\begin{equation*}
		\frac{1}{2} (3\omega + \pi(P))(g(X,Z)Y - g(Y,Z) X) - \frac{1}{4}\pi(Z)(\pi(Y)X - \pi(X)Y)=0.
	\end{equation*}
	Contracting this equation with respect to $X$ yields
	\begin{equation*}
		\frac{n-1}{2} (3\omega + \pi(P)) g(Y,Z)  + \frac{n-1}{4}\pi(Y)\pi(Z)=0.
	\end{equation*}
	Furthermore, if we take  $Z=P$, the previous equation takes the form\footnote{The approach used in \cite{mpvz2023} differed, leading to a result that depends on the dimension of the manifold.}
	\begin{equation*}
		\frac{3(n-1)}{4} (2\omega + \pi(P)) \pi(Y)=0,
	\end{equation*}
	from where
	\begin{equation*}
		2\omega +\pi(P)=0.
	\end{equation*}
	\begin{theorem} \label{thm:transfconnCSSM}
		Let $(\mathcal{M},g,\overset{1}{\nabla})$ be a pseudo-Riemannian manifold with a concircularly semi-symmetric metric connection $\overset{1}{\nabla}$, $\overset{0}{\nabla}$ be the symmetric connection of $\overset{1}{\nabla}$, $\overset{2}{\nabla}$ be the dual connection of $\overset{1}{\nabla}$ and $\overset{g}{\nabla}$ be the Levi-Civita connection. Then, we have:
		\begin{itemize}
			\item[1)] If the Riemannian curvature tensor $\overset{g}{R}$ is invariant under connection transformation $\overset{g}{\nabla} \rightarrow \overset{0}{\nabla}$,  then $2\omega=-g(P,P)$.
			\item[2)] The Riemannian curvature tensor $\overset{g}{R}$ is invariant under connection transformation $\overset{g}{\nabla} \rightarrow \overset{1}{\nabla}$ if and only if $2\omega=-g(P,P)$.
			\item[3)] The Riemannian curvature tensor $\overset{g}{R}$ is invariant under connection transformation $\overset{g}{\nabla} \rightarrow \overset{2}{\nabla}$ if and only if $\omega=0$.
		\end{itemize}
	\end{theorem}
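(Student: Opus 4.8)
The plan is to prove all three parts by directly comparing the explicit curvature formulas (\ref{eq:R0css})--(\ref{eq:R2css}) with the Riemannian curvature tensor $\overset{g}{R}$, recalling that ``$\overset{g}{R}$ is invariant under $\overset{g}{\nabla}\to\overset{\theta}{\nabla}$'' means precisely the tensor identity $\overset{\theta}{R}=\overset{g}{R}$, and that $\pi(P)=g(P,P)$ by definition of $P$. Throughout I would use that for $n>2$ the tensor $(X,Y,Z)\mapsto g(X,Z)Y-g(Y,Z)X$ is not identically zero, so any relation of the form (scalar)$\cdot(g(X,Z)Y-g(Y,Z)X)\equiv 0$ forces the scalar to vanish.

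Parts 2) and 3) are then immediate. Subtracting $\overset{g}{R}$ from (\ref{eq:R1css}) gives $\overset{1}{R}(X,Y)Z-\overset{g}{R}(X,Y)Z=(2\omega+\pi(P))(g(X,Z)Y-g(Y,Z)X)$, a single scalar multiple of the above tensor; hence $\overset{1}{R}=\overset{g}{R}$ holds if and only if $2\omega+\pi(P)=0$, i.e. $2\omega=-g(P,P)$. Likewise (\ref{eq:R2css}) gives $\overset{2}{R}-\overset{g}{R}=\omega(g(X,Z)Y-g(Y,Z)X)$, so invariance under $\overset{g}{\nabla}\to\overset{2}{\nabla}$ holds if and only if $\omega=0$. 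Both are genuine ``iff'' statements because in each case the whole discrepancy is carried by one coefficient.

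Part 1) requires more care, and I expect it to be the main obstacle, precisely because the discrepancy in (\ref{eq:R0css}) now involves two algebraically independent terms rather than one. Setting $\overset{0}{R}=\overset{g}{R}$ yields
\begin{equation*}
\frac{1}{2}(3\omega+\pi(P))(g(X,Z)Y-g(Y,Z)X)-\frac{1}{4}\pi(Z)(\pi(Y)X-\pi(X)Y)=0,
\end{equation*}
and since the two tensorial terms are not proportional in general, one cannot simply read off a coefficient. My strategy would be to contract over $X$ and the output slot to pass to the Ricci level, obtaining $\frac{n-1}{2}(3\omega+\pi(P))g(Y,Z)+\frac{n-1}{4}\pi(Y)\pi(Z)=0$, and then specialise $Z=P$; using $g(Y,P)=\pi(Y)$ and $\pi(P)=g(P,P)$ this collapses to $\frac{3(n-1)}{4}(2\omega+\pi(P))\pi(Y)=0$ for all $Y$, whence $2\omega+\pi(P)=0$, i.e. $2\omega=-g(P,P)$.

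The remaining subtlety is that this last step requires a $Y$ with $\pi(Y)\neq 0$, so the degenerate locus where $\pi=0$ must be handled separately: there $P=0$, and (\ref{eq:conditionforconcircularmapping}) forces $\omega g=0$, hence $\omega=0$ (as $n>2$), so $2\omega=0=-g(P,P)$ and the conclusion still holds. This also explains why part 1) is stated only as a one-way implication: substituting $2\omega+\pi(P)=0$ back into (\ref{eq:R0css}) leaves the residual term $\frac{1}{2}\omega(g(X,Z)Y-g(Y,Z)X)-\frac{1}{4}\pi(Z)(\pi(Y)X-\pi(X)Y)$, which does not vanish in general, so invariance of $\overset{g}{R}$ under $\overset{g}{\nabla}\to\overset{0}{\nabla}$ is strictly stronger than the stated necessary condition.
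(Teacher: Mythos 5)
Your proposal is correct and follows essentially the same route as the paper: for part 1) the paper likewise sets $\overset{0}{R}=\overset{g}{R}$ in (\ref{eq:R0css}), contracts over $X$, and then substitutes $Z=P$ to obtain $\frac{3(n-1)}{4}(2\omega+\pi(P))\pi(Y)=0$, while parts 2) and 3) are read off from the single-coefficient discrepancies in (\ref{eq:R1css}) and (\ref{eq:R2css}). Your explicit treatment of the degenerate locus $\pi=0$ and your remark on why part 1) is only a one-way implication are sensible minor additions, not a different method.
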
 
	
	By contracting equations (\ref{eq:R0css})-(\ref{eq:R5css}) with respect to the vector $X$, we obtain the corresponding Ricci tensors $\overset{\theta}{R}ic$, $\theta=0,1,\dots,5$. With $\overset{\theta}{r}$ and $\overset{g}{r}$ we denote the scalar curvatures, i.e. the trace of Ricci tensors $\overset{\theta}{R}ic$, $\theta=0,1,\dots,5$, and $\overset{g}{R}ic$.
	
	\begin{theorem}
		Let $(\mathcal{M},g,\overset{1}{\nabla})$ be a pseudo-Riemannian manifold with a concircularly semi-symmetric metric connection.
		Ricci tensors $\overset{\theta}{R}ic$, $\theta=0,1,2,\ldots,5$, and the Ricci tensor $\overset{g}{R}ic$ are related by equations
		\begin{align}
			\label{eq:Ric0css}
			\overset{0}{R}ic  = &  \overset{g}{R}ic  -\frac{n-1}{2}  (3\omega + \pi(P)) g - \frac{n-1}{4}\pi\otimes\pi,
			\\
			\label{eq:Ric1css}
			\overset{1}{R}ic = & \overset{g}{R}ic - (n-1)(2\omega +\pi(P)) g,
			\\
			\label{eq:Ricalphacss}
			\overset{\alpha}{R}ic = & \overset{g}{R}ic - (n-1)\omega g, \;\; \alpha=2,3,
			\\
			\label{eq:Ric4css}
			\overset{4}{R}ic = & \overset{g}{R}ic - (n-1) \omega  g - (n-1)\pi\otimes\pi,
			\\
			\label{eq:Ric5css}
			\overset{5}{R}ic = & \overset{g}{R}ic- \frac{n-1}{2} (3\omega + \pi(P) )  g - \frac{n-1}{2}\pi\otimes\pi.
		\end{align}
	\end{theorem}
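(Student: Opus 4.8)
The plan is to obtain each Ricci tensor $\overset{\theta}{R}ic$ directly by contracting the corresponding curvature identity (\ref{eq:R0css})--(\ref{eq:R5css}) over the slot $X$, exactly as announced in the sentence preceding the statement. By definition the contraction of $\overset{g}{R}(X,Y)Z$ over $X$ is $\overset{g}{R}ic(Y,Z)$, so the leading term on the right-hand side of every identity automatically reproduces $\overset{g}{R}ic$. It then remains only to contract the algebraic correction terms, each of which is built from $g$ and $\pi$, and to collect coefficients.

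First I would record once and for all how the contraction over $X$ acts on the handful of elementary building blocks that occur. Writing $c_X[\,\cdot\,]$ for this contraction (the trace taken over the output index and the $X$-slot), a short computation in a local frame gives
\begin{align*}
	c_X[g(X,Z)Y] &= g(Y,Z), & c_X[g(Y,Z)X] &= n\,g(Y,Z), \\
	c_X[g(Y,X)Z] &= g(Y,Z), & c_X[g(X,Y)Z] &= g(Y,Z), \\
	c_X[\pi(Z)\pi(Y)X] &= n\,(\pi\otimes\pi)(Y,Z), & c_X[\pi(Z)\pi(X)Y] &= (\pi\otimes\pi)(Y,Z), \\
	c_X[\pi(Y)\pi(Z)X] &= n\,(\pi\otimes\pi)(Y,Z), & c_X[\pi(Y)\pi(X)Z] &= (\pi\otimes\pi)(Y,Z).
\end{align*}
The governing principle is simple: a factor $n$ appears precisely when $X$ occupies the output-vector position (its index is contracted with itself), whereas $X$ contributes only a factor $1$ when it merely fills an argument of $g$ or $\pi$. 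In particular the recurring bracket satisfies $c_X[g(X,Z)Y-g(Y,Z)X]=(1-n)\,g=-(n-1)g$ and $c_X[\pi(Z)(\pi(Y)X-\pi(X)Y)]=(n-1)\,\pi\otimes\pi$.

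With this table in hand each identity is dispatched by linearity. For instance, the bracket multiplying $2\omega+\pi(P)$ in (\ref{eq:R1css}) contracts to $-(n-1)g$, yielding (\ref{eq:Ric1css}) at once; the same bracket with different scalar coefficients handles (\ref{eq:R0css}), (\ref{eq:R2css}) and (\ref{eq:R5css}), while the $\pi$-brackets contract to $(n-1)\pi\otimes\pi$ and generate the remaining $\pi\otimes\pi$ terms with the correct factors $\tfrac{n-1}{4}$, $n-1$, $\tfrac{n-1}{2}$. The one point that requires genuine care---and the natural place to slip---is the treatment of the mixed blocks in (\ref{eq:R3css}) and (\ref{eq:R4css}). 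There the term $\pi(P)\bigl(g(X,Z)Y-g(X,Y)Z\bigr)$ contracts to $\pi(P)\bigl(g(Y,Z)-g(Y,Z)\bigr)=0$, so the $\pi(P)$ contribution disappears entirely, while the three-term bracket $2g(X,Z)Y-g(Y,X)Z-g(Y,Z)X$ contracts to $(2-1-n)g=-(n-1)g$. Verifying that this cancellation occurs (so that $\overset{3}{R}ic$ and the $g$-part of $\overset{4}{R}ic$ both collapse to $\overset{g}{R}ic-(n-1)\omega g$) is the key consistency check; everything else is linear bookkeeping. Assembling the contracted pieces for $\theta=0,1,2,3,4,5$ then reproduces (\ref{eq:Ric0css})--(\ref{eq:Ric5css}) line by line.
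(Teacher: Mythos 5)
Your proposal is correct and follows exactly the route the paper indicates: contracting the curvature identities (\ref{eq:R0css})--(\ref{eq:R5css}) over the slot $X$, with the trace table and the cancellation of the $\pi(P)(g(X,Z)Y-g(X,Y)Z)$ term all checking out. The coefficients you obtain match (\ref{eq:Ric0css})--(\ref{eq:Ric5css}) line by line, so nothing further is needed.
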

	
	From here, we can conclude that Ricci tensors $\overset{\theta}{R}ic$, $\theta=0,1,\dots,5$, are symmetric.
	
	\begin{theorem}
		Let $(\mathcal{M},g,\overset{1}{\nabla})$ be a pseudo-Riemannian manifold with a concircularly semi-symmetric metric connection.
		Scalar curvatures  $\overset{\theta}{r}$, $\theta=0,1,2,\ldots,5$, and scalar curvature $\overset{g}{r}$ are related by equations
		\begin{align}
			\label{eq:r0css}
			\overset{0}{r}  = &  \overset{g}{r}   -\frac{3n(n-1)}{2}  \omega - \frac{(n-1)(2n+1)}{4}\pi(P),
			\\
			\label{eq:r1css1}
			\overset{1}{r}  = &  \overset{g}{r}   -2n(n-1) (\omega + \frac{1}{2}\pi(P)),
			\\
			\label{eq:ralphacss}
			\overset{\alpha}{r}  = & \overset{g}{r} - n(n-1)\omega, \;\; \alpha=2,3
			\\
			\label{eq:r4css}
			\overset{4}{r}  = & \overset{g}{r}  - n(n-1) \omega - (n-1)\pi(P),
			\\
			\label{eq:r5css}
			\overset{5}{r}= & \overset{g}{r}- \frac{3n(n-1)}{2}\omega - \frac{n^2-1}{2}\pi(P).
		\end{align}
	\end{theorem}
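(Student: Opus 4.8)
The plan is to read off each scalar curvature by contracting the corresponding Ricci identity (\ref{eq:Ric0css})--(\ref{eq:Ric5css}) with the inverse metric $g^{ij}$, since $\overset{\theta}{r}$ is by definition the $g$-trace of $\overset{\theta}{R}ic$ and $\overset{g}{r}$ the $g$-trace of $\overset{g}{R}ic$. Only two auxiliary traces are needed, namely those of the two tensor fields appearing on the right-hand sides: $\mathrm{tr}_{g}\, g = g^{ij}g_{ij} = n$, and $\mathrm{tr}_{g}(\pi\otimes\pi) = \pi(P)$. The latter follows from $\pi(\cdot)=g(\cdot,P)$, since $g^{ij}\pi_i\pi_j = g^{ij}g_{ik}P^{k}\pi_j = P^{j}\pi_j = \pi(P)$.

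With these two facts, each identity collapses to a scalar linear combination. Tracing (\ref{eq:Ric1css}) gives $\overset{1}{r} = \overset{g}{r} - n(n-1)(2\omega+\pi(P))$, which is (\ref{eq:r1css1}); tracing (\ref{eq:Ricalphacss}) gives $\overset{\alpha}{r} = \overset{g}{r} - n(n-1)\omega$ for $\alpha=2,3$, which is (\ref{eq:ralphacss}); and tracing (\ref{eq:Ric4css}) gives $\overset{4}{r} = \overset{g}{r} - n(n-1)\omega - (n-1)\pi(P)$, which is (\ref{eq:r4css}). In these three cases no regrouping is required.

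The only bookkeeping arises in the cases $\theta=0$ and $\theta=5$, where $\pi(P)$ is produced both by the $g$-term and by the $\pi\otimes\pi$-term. For $\theta=0$, contracting (\ref{eq:Ric0css}) yields the $\omega$-coefficient $-\frac{3n(n-1)}{2}$ together with $-\frac{n(n-1)}{2}\pi(P) - \frac{n-1}{4}\pi(P)$, and summing the two $\pi(P)$ contributions as $\frac{2n(n-1)+(n-1)}{4} = \frac{(n-1)(2n+1)}{4}$ recovers (\ref{eq:r0css}). For $\theta=5$, contracting (\ref{eq:Ric5css}) gives the same $\omega$-coefficient and $-\frac{n(n-1)}{2}\pi(P) - \frac{n-1}{2}\pi(P)$, whose sum collapses to $\frac{(n-1)(n+1)}{2} = \frac{n^2-1}{2}$, recovering (\ref{eq:r5css}). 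There is no genuine obstacle here: the whole argument is a termwise trace followed by elementary collection of coefficients, and the only point demanding care is remembering to add both sources of $\pi(P)$ in the $\theta=0$ and $\theta=5$ cases.
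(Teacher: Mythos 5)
Your proposal is correct and follows exactly the route the paper intends (the paper leaves this proof implicit): the scalar curvatures are obtained by taking the $g$-trace of the Ricci identities (\ref{eq:Ric0css})--(\ref{eq:Ric5css}), using $\mathrm{tr}_g\,g=n$ and $\mathrm{tr}_g(\pi\otimes\pi)=\pi(P)$. All five coefficient collections, including the merging of the two $\pi(P)$ contributions in the $\theta=0$ and $\theta=5$ cases, check out.
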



	\section{Lorentzian manifolds} \label{section3}
	
	We now consider the application of a concircularly semi-symmetric metric connection with a unit timelike vector $P$ to Lorentzian manifolds. Differentiating the relation $\pi(P)=g(P,P)=-1$ (with respect to the Levi-Civita connection) and using equation (\ref{eq:conditionforconcircularmapping}) we obtain $(\omega - 1)\pi(X)=0$, which implies $\omega=1$. Equation (\ref{eq:conditionforconcircularmapping}) then takes the form
	\begin{equation*}
		(\overset{g}{\nabla}_{X} \pi )(Y) = g(X,Y) + \pi(X)\pi(Y),
	\end{equation*}
	from where
	\begin{equation*}
		\overset{g}{\nabla}_{X} P= X + \pi(X)P,
	\end{equation*}
	or equivalently
	\begin{equation}\label{eq:Ptorse-forming}
		\overset{g}{\nabla}_{X} P= -\pi(P)(X + \pi(X)P).
	\end{equation}
	This demonstrates that the unit timelike vector $P$ is torse-forming. Furthermore, we see that $\omega + \pi(P)=0$ holds. Substituting this into equation (\ref{eq:nabla1XP}) yields $\overset{1}{\nabla}P=0$, indicating that the connection is a semi-symmetric metric $P$-connection. This establishes the following theorem.
	
	\begin{theorem}
		If the vector field $P$ is unit timelike, then a concircularly semi-symmetric metric connection is a semi-symmetric metric $P$-connection.
	\end{theorem}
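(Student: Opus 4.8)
The plan is to pin down the value of the scalar function $\omega$ from the hypothesis that $P$ is unit timelike, and then to appeal to the characterization already recorded in equation (\ref{eq:nabla1XP}) (equivalently, Theorem \ref{thm:CSSm-SSmP}), namely that the connection is a semi-symmetric metric $P$-connection exactly when $\omega + \pi(P) = 0$. So the whole task reduces to showing $\omega = 1$.

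First I would note that $P$ being unit timelike means $g(P,P) = -1$, hence $\pi(P) = g(P,P) = -1$ is a \emph{constant}. The natural move is to differentiate this constant along an arbitrary $X$ using the Levi-Civita connection. Since $\overset{g}{\nabla}$ is metric, $X\big(g(P,P)\big) = 2\,g(\overset{g}{\nabla}_X P, P) = 0$, so the normalization of $P$ forces $g(\overset{g}{\nabla}_X P, P) = 0$ for every $X$.

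Next I would evaluate the defining concircular relation (\ref{eq:conditionforconcircularmapping}) at $Y = P$. Expanding the covariant derivative of the $1$-form gives $(\overset{g}{\nabla}_X \pi)(P) = X(\pi(P)) - \pi(\overset{g}{\nabla}_X P) = -\,g(\overset{g}{\nabla}_X P, P) = 0$ by the previous step, while the right-hand side of (\ref{eq:conditionforconcircularmapping}) equals $\omega\, g(X,P) + \pi(X)\pi(P) = (\omega - 1)\pi(X)$, using $g(X,P)=\pi(X)$ and $\pi(P)=-1$. Equating the two sides yields $(\omega - 1)\pi(X) = 0$ for all $X$; since $P \neq 0$ the covector $\pi$ is not identically zero, so $\omega = 1$.

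Finally, with $\omega = 1$ and $\pi(P) = -1$ we get $\omega + \pi(P) = 0$, and equation (\ref{eq:nabla1XP}) immediately gives $\overset{1}{\nabla} P = 0$; equivalently $\omega = 1 = -g(P,P)$ is precisely the condition of Theorem \ref{thm:CSSm-SSmP}, so $\overset{1}{\nabla}$ is a semi-symmetric metric $P$-connection. I do not expect a genuine obstacle here — the argument is short and self-contained. The only point requiring care is the differentiation step, where one must use metric compatibility of $\overset{g}{\nabla}$ to rewrite $X(\pi(P))$ as $g(\overset{g}{\nabla}_X P, P)$ and then invoke the normalization of $P$ to kill this term; everything else is bookkeeping with the already-established identities.
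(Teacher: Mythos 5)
Your proposal is correct and follows essentially the same route as the paper: differentiate the normalization $\pi(P)=g(P,P)=-1$, plug $Y=P$ into the concircularity condition to deduce $(\omega-1)\pi(X)=0$ and hence $\omega=1$, and then conclude $\overset{1}{\nabla}P=0$ from $\omega+\pi(P)=0$ via equation (\ref{eq:nabla1XP}). The only difference is that you spell out the metric-compatibility step $X(\pi(P))=2g(\overset{g}{\nabla}_X P,P)=0$ explicitly, which the paper leaves implicit.
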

	
	Lorentzian manifolds equipped with a semi-symmetric metric $P$-connection were studied in \cite{chaubeysuhde2020}, where the following theorem was proven. 
	
	\begin{theorem}\cite{chaubeysuhde2020} \label{thm:SSPGRW}
		Let $\mathcal{M}$ be an $n$-dimensional $(n\geq3)$ Lorentzian manifold equipped with a semi-symmetric metric $P$-connection whose associated vector $P$ is a unit timelike torse-forming vector field. Then $\mathcal{M}$ is a GRW space-time.
	\end{theorem}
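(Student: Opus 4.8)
The plan is to reduce the claim to Chen's concircular-vector characterization of GRW space-times (the theorem of \cite{chen2014}): it suffices to exhibit a \emph{timelike concircular} vector field in Fialkow's sense, i.e.\ a timelike $v$ with $\overset{g}{\nabla}_{X} v = \rho X$ for some scalar $\rho$. By hypothesis $P$ is unit timelike, so $\pi(P)=g(P,P)=-1$, and combining the semi-symmetric metric $P$-connection condition $\overset{1}{\nabla}P=0$ with (\ref{eq:Ptorse-forming}) gives
\begin{equation*}
	\overset{g}{\nabla}_{X} P = X + \pi(X)P .
\end{equation*}
Thus $P$ is torse-forming for the Levi-Civita connection, but the term $\pi(X)P$ prevents $P$ itself from being concircular, so Chen's theorem cannot be applied to $P$ directly; the heart of the proof is to remove this term by a conformal rescaling.

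First I would show that $\pi$ is closed. Lowering the index in the displayed relation (equivalently, using (\ref{eq:conditionforconcircularmapping}) with $\omega=1$) yields $(\overset{g}{\nabla}_{X}\pi)(Y)=g(X,Y)+\pi(X)\pi(Y)$, whose right-hand side is symmetric in $X,Y$; hence $\mathrm{d}\pi(X,Y)=(\overset{g}{\nabla}_{X}\pi)(Y)-(\overset{g}{\nabla}_{Y}\pi)(X)=0$. Locally we may therefore write $\pi=\mathrm{d}\psi$ and set $v=e^{-\psi}P$. A direct computation gives
\begin{equation*}
	\overset{g}{\nabla}_{X} v = (X e^{-\psi})P + e^{-\psi}\overset{g}{\nabla}_{X}P = e^{-\psi}\big(-\pi(X)+\pi(X)\big)P + e^{-\psi}X = e^{-\psi}X ,
\end{equation*}
so $v$ is concircular in Fialkow's sense. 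Since $e^{-\psi}>0$ and $P$ is timelike, $v$ is timelike as well, and Chen's theorem then identifies $\mathcal{M}$ with a GRW space-time.

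An alternative route, which avoids the rescaling and instead invokes the Mantica-Molinari characterization (\cite{manticamolinari2017}), is to verify directly that $P$ is an eigenvector of the Ricci tensor. Differentiating $\overset{g}{\nabla}_{X}P=X+\pi(X)P$ once more and antisymmetrizing, the first-derivative and connection terms cancel by torsion-freeness and the symmetry of $(\overset{g}{\nabla}_{X}\pi)(Y)$, leaving
\begin{equation*}
	\overset{g}{R}(X,Y)P = \pi(Y)X - \pi(X)Y .
\end{equation*}
Contracting over $X$ then gives $\overset{g}{R}ic(Y,P)=(n-1)\pi(Y)=(n-1)g(P,Y)$, so $P$ is a unit timelike torse-forming Ricci eigenvector and the Mantica-Molinari theorem applies. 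In either route the only genuine obstacle is the same: $P$ is torse-forming but not concircular, so one must either establish the integrability needed for the rescaling (closedness of $\pi$) or carry out the curvature contraction to recover the eigenvector property; both are short once the identity $(\overset{g}{\nabla}_{X}\pi)(Y)=g(X,Y)+\pi(X)\pi(Y)$ is in hand.
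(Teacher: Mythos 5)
Your argument is correct, but note that the paper itself does not prove this statement: it is imported verbatim from \cite{chaubeysuhde2020} and used as a black box, so there is no internal proof to compare against. What you have done is reconstruct the proof of the cited result, and your second route is essentially the standard one: from $\overset{1}{\nabla}P=0$ and $\pi(P)=-1$ you get $\overset{g}{\nabla}_{X}P=X+\pi(X)P$, then the curvature identity $\overset{g}{R}(X,Y)P=\pi(Y)X-\pi(X)Y$ and the contraction $\overset{g}{R}ic(Y,P)=(n-1)\pi(Y)$ --- these are exactly the identities the paper records later as (\ref{eq:RXYP}) and (\ref{eq:RicPX}) --- after which the Mantica--Molinari characterization applies directly. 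That route is clean and complete. Your first route (rescaling $v=e^{-\psi}P$ to kill the $\pi(X)P$ term and invoking Chen's theorem) is computationally correct, but it has a small global caveat you should acknowledge: closedness of $\pi$ only gives $\pi=\mathrm{d}\psi$ locally, so $v$ is a priori only a locally defined timelike concircular field, whereas Chen's theorem as stated asks for a vector field on $\mathcal{M}$; either restrict to the simply connected case, argue that the GRW structure is local in nature, or simply rely on the second route, which uses the globally defined $P$ and avoids the issue entirely. One further cosmetic point: the torse-forming hypothesis in the statement is redundant under your reading, since $\overset{1}{\nabla}P=0$ together with $g(P,P)=-1$ already forces $\overset{g}{\nabla}_{X}P=X+\pi(X)P$.
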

	
	A semi-symmetric metric $P$-connection on Lorentzian manifolds was also investigated in \cite{yilmaz2023}, where the previous theorem was expressed in a different form. Combining the preceding two theorems, we have the following corollary.
	
	\begin{corollary}\label{cor:CSSGRW}
		An $n$-dimensional Lorentzian manifold $(n\geq3)$ equipped with a concircularly semi-symmetric metric connection whose associated vector $P$ is a unit timelike vector field is a GRW space-time.
	\end{corollary}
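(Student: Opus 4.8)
The plan is to obtain this corollary by combining the two preceding theorems: the statement that a concircularly semi-symmetric metric connection with unit timelike generator $P$ reduces to a semi-symmetric metric $P$-connection, and Theorem \ref{thm:SSPGRW}, which guarantees the GRW structure for such connections. Concretely, I would check that the unit timelike hypothesis forces both (i) $\overset{1}{\nabla}P=0$, so that the connection is semi-symmetric metric $P$, and (ii) that $P$ is torse-forming; these are precisely the inputs required by Theorem \ref{thm:SSPGRW}.

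First I would use the normalization $\pi(P)=g(P,P)=-1$ supplied by the unit timelike assumption. Differentiating this constant along the Levi-Civita connection and substituting the defining relation (\ref{eq:conditionforconcircularmapping}) of a concircularly semi-symmetric metric connection yields $(\omega-1)\pi(X)=0$ for all $X$; since $P\neq 0$ the covector $\pi$ is not identically zero, forcing $\omega=1$. With this value the concircular condition becomes $(\overset{g}{\nabla}_X\pi)(Y)=g(X,Y)+\pi(X)\pi(Y)$, equivalently
\begin{equation*}
	\overset{g}{\nabla}_X P=-\pi(P)\bigl(X+\pi(X)P\bigr),
\end{equation*}
which is the torse-forming relation (\ref{eq:Ptorse-forming}). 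Hence $P$ is a unit timelike torse-forming vector field.

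Next I would note that $\omega=1$ together with $\pi(P)=-1$ gives $\omega+\pi(P)=0$. Substituting into (\ref{eq:nabla1XP}) produces $\overset{1}{\nabla}_X P=0$ for every $X$, so $\overset{1}{\nabla}$ is a semi-symmetric metric $P$-connection. At this stage all hypotheses of Theorem \ref{thm:SSPGRW} are in force: $\mathcal{M}$ is Lorentzian of dimension $n\geq 3$, the connection is semi-symmetric metric $P$, and the generator $P$ is unit timelike and torse-forming. Applying Theorem \ref{thm:SSPGRW} then gives that $\mathcal{M}$ is a GRW space-time.

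Because the two constituent results are already available, there is no real analytic difficulty; the only point demanding attention is the verification of the torse-forming property, since Theorem \ref{thm:SSPGRW} requires it explicitly whereas the preceding theorem only records that the connection becomes semi-symmetric metric $P$. The derivation of (\ref{eq:Ptorse-forming}) furnishes exactly this link, so the corollary follows at once from assembling the two theorems.
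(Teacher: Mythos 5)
Your proposal is correct and follows essentially the same route as the paper: differentiating $\pi(P)=-1$ to get $\omega=1$, deducing the torse-forming relation (\ref{eq:Ptorse-forming}) and $\overset{1}{\nabla}P=0$, and then invoking Theorem \ref{thm:SSPGRW}. The paper derives exactly these facts in Section \ref{section3} and states the corollary as the combination of the two preceding theorems.
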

	
	In contrast to the aforementioned papers, \cite{ucd2024} studied a semi-symmetric metric connection without the condition that the vector $P$ be parallel with respect to the observed connection and the following theorem was established.
	
	\begin{theorem}\cite{ucd2024}\label{thm:SSRicttGRW}
		A Lorentzian manifold of dimension $n\geq3$ with a semi-symmetric metric connection whose Ricci tensor is symmetric and torsion tensor is recurrent represents a GRW space-time. 
	\end{theorem}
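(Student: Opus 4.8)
The plan is to translate the two structural hypotheses into pointwise relations on the defining covector $\pi$ and then recognise $P$ as a torse-forming vector, at which point the characterization theorems quoted above apply. First I would unwind the recurrence of the torsion. Writing it as $\overset{1}{\nabla}\,\overset{1}{T}=\gamma\otimes\overset{1}{T}$ for some 1-form $\gamma$ and recalling $\overset{1}{T}(X,Y)=\pi(Y)X-\pi(X)Y$, I note that the Kronecker (identity) tensor is parallel for every connection, so differentiation only hits the factors $\pi$. A single contraction then collapses the recurrence of $\overset{1}{T}$ to the recurrence of its generator, $\overset{1}{\nabla}_X\pi=\gamma(X)\pi$. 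Feeding this into the identity $(\overset{1}{\nabla}_X\pi)(Y)=(\overset{g}{\nabla}_X\pi)(Y)-\pi(X)\pi(Y)+\pi(P)g(X,Y)$ recorded just before (\ref{eq:nabla1XP}) yields the Levi-Civita relation $(\overset{g}{\nabla}_X\pi)(Y)=\gamma(X)\pi(Y)+\pi(X)\pi(Y)-\pi(P)g(X,Y)$.

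Next I would bring in the symmetry of the Ricci tensor. For a semi-symmetric metric connection the skew part of $\overset{1}{R}ic$ is a fixed nonzero multiple of the exterior derivative $d\pi$, so that $\overset{1}{R}ic$ being symmetric is equivalent to $\pi$ being closed; this is consistent with the concircular case, where (\ref{eq:conditionforconcircularmapping}) already forces $\overset{g}{\nabla}\pi$ to be symmetric. Antisymmetrising the displayed expression for $\overset{g}{\nabla}_X\pi$ in $X$ and $Y$, the $\pi\otimes\pi$ and $g$ terms drop out and closedness leaves $\gamma(X)\pi(Y)-\gamma(Y)\pi(X)=0$, i.e. $\gamma\wedge\pi=0$. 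Thus $\gamma=\lambda\pi$ for a scalar $\lambda$, and substituting back gives $\overset{g}{\nabla}_X P=-\pi(P)X+(\lambda+1)\pi(X)P$, so that $P$ is torse-forming with its $P$-component proportional to $\pi=g(P,\cdot)$.

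Finally I would pass to the characterization. Assuming (or extracting from the setup) that $P$ is timelike, normalising to the unit timelike field $\bar P=P/\sqrt{-g(P,P)}$ turns the above into the Mantica--Molinari form $\overset{g}{\nabla}\bar\pi=\omega(g+\bar\pi\otimes\bar\pi)$, which is precisely the shape appearing in (\ref{eq:Ptorse-forming}). It then remains to check that $\bar P$ is an eigenvector of the Levi-Civita Ricci tensor, after which Mantica--Molinari \cite{manticamolinari2017} gives that the manifold is a GRW space-time; equivalently, since $\pi$ is closed one may rescale $\bar P$ to annihilate the $\bar\pi$-term, producing a timelike concircular vector in Fialkow's sense and invoking Chen's theorem \cite{chen2014} directly.

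I expect the genuine obstacles to be twofold. The first is the curvature computation showing that the antisymmetric part of $\overset{1}{R}ic$ is exactly proportional to $d\pi$, so that symmetry really does force $d\pi=0$. The second is pinning down the causal character together with the eigenvector (or rescaling) data needed to match the hypotheses of the characterization theorem: the algebra collapsing torsion-recurrence to $\overset{1}{\nabla}\pi=\gamma\pi$ and the wedge argument $\gamma\wedge\pi=0$ are routine, whereas ensuring that $P$ is — or can be rescaled to — a unit timelike vector with the required eigenvector property is what actually delivers the GRW structure.
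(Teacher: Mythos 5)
The paper offers no proof of this statement --- it is quoted from \cite{ucd2024} --- and the route it gestures at is the one sketched immediately afterwards: for a \emph{unit timelike} $P$, metric compatibility of $\overset{1}{\nabla}$ applied to $g(P,P)=-1$ kills the recurrence $1$-form (from $\overset{1}{\nabla}\pi=\gamma\otimes\pi$ one gets $0=Xg(P,P)=2(\overset{1}{\nabla}_X\pi)(P)=-2\gamma(X)$), so recurrent torsion alone forces $\overset{1}{\nabla}P=0$; the connection is then a semi-symmetric metric $P$-connection and Theorem \ref{thm:SSPGRW} finishes. Your route is genuinely different: you spend the Ricci-symmetry hypothesis to get $d\pi=0$, hence $\gamma\wedge\pi=0$ and $\gamma=\lambda\pi$, and deduce that $P$ is torse-forming with $\eta\propto\pi$ without ever normalizing $P$. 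What this buys is that the Ricci hypothesis actually does some work (in the paper's route it is redundant once $P$ is unit timelike); what it costs is the normalization and eigenvector analysis at the end. The algebra you do carry out is correct: contracting $\overset{1}{\nabla}\,\overset{1}{T}=\gamma\otimes\overset{1}{T}$ down to $\overset{1}{\nabla}\pi=\gamma\otimes\pi$, and the skew part of $\overset{1}{R}ic$ being a nonzero multiple of $d\pi$ for $n\geq3$, are both standard and right.

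There are two genuine gaps, both of which you flag but neither of which you close. First, the causal character: nothing in the quoted statement or in your argument makes $P$ timelike, and the conclusion fails without it --- if $\pi(P)=0$ your relation degenerates to $\overset{g}{\nabla}_XP=(\lambda+1)\pi(X)P$, a merely recurrent null vector, which gives no GRW structure. This hypothesis must be imported from \cite{ucd2024}, where $P$ is unit timelike throughout; the statement as reproduced here suppresses it. Second, the deferred step --- that $\bar P=P/\sqrt{-g(P,P)}$ is torse-forming of Mantica--Molinari shape \emph{and} a Ricci eigenvector --- is where the remaining content sits. It does close, and you should do it: writing $u=\sqrt{-g(P,P)}$, your formula $\overset{g}{\nabla}_XP=-\pi(P)X+(\lambda+1)\pi(X)P$ gives $X(u)\propto\pi(X)$, whence $\overset{g}{\nabla}_X\bar P=u\,(X+\bar\pi(X)\bar P)$, i.e.\ $\omega=u$ with $d\omega\propto\bar\pi$; and $d\omega\propto\bar\pi$ is precisely the condition under which a unit timelike torse-forming vector is an eigenvector of $\overset{g}{R}ic$. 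By contrast, the alternative ending via Chen's theorem is not justified as stated: rescaling to a concircular vector in Fialkow's sense requires $\eta=(\lambda+1)\pi$ to be closed, which needs $d\lambda\wedge\pi=0$ and does not follow from $d\pi=0$ alone.
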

	
	From equation (2.18) in \cite{ucd2024}, we deduce the following theorem.
	\begin{theorem}
		A semi-symmetric metric connection with associated unit timelike vector $P$ and recurrent torsion tensor is a semi-symmetric metric $P$-connection.
	\end{theorem}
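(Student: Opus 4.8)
The plan is to deduce the defining property $\overset{1}{\nabla}P=0$ of a semi-symmetric metric $P$-connection directly from the recurrence of the torsion, so that the whole argument takes place at the level of the generator $P$. The torsion of the connection \eqref{eq:SSmc} is $\overset{1}{T}(X,Y)=\pi(Y)X-\pi(X)Y$, and the first step is to compute its covariant derivative with respect to $\overset{1}{\nabla}$ itself. Using only the Leibniz rule and the bilinearity of $\overset{1}{T}$ (neither of which requires the connection to be torsion-free), all terms containing $\overset{1}{\nabla}_W X$ and $\overset{1}{\nabla}_W Y$ cancel, leaving the compact identity
\begin{equation*}
	(\overset{1}{\nabla}_W \overset{1}{T})(X,Y)=(\overset{1}{\nabla}_W \pi)(Y)\,X-(\overset{1}{\nabla}_W \pi)(X)\,Y,
\end{equation*}
which is equation (2.18) of \cite{ucd2024} written in the present notation.

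Next I would impose the recurrence hypothesis, $(\overset{1}{\nabla}_W \overset{1}{T})(X,Y)=\psi(W)\,\overset{1}{T}(X,Y)$ for some $1$-form $\psi$. Substituting the identity above together with the explicit form of $\overset{1}{T}$ gives
\begin{equation*}
	\big[(\overset{1}{\nabla}_W \pi)(Y)-\psi(W)\pi(Y)\big]X-\big[(\overset{1}{\nabla}_W \pi)(X)-\psi(W)\pi(X)\big]Y=0
\end{equation*}
for all $X,Y$. Since $n\geq3$, for a fixed $Y$ one can always choose $X$ linearly independent from it, and matching coefficients forces $(\overset{1}{\nabla}_W \pi)(Y)=\psi(W)\pi(Y)$ for all $W,Y$. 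Because $\overset{1}{\nabla}$ is metric and $\pi=g(P,\cdot)$, one has $(\overset{1}{\nabla}_W \pi)(Y)=g(\overset{1}{\nabla}_W P,Y)$, so this relation is equivalent to $\overset{1}{\nabla}_W P=\psi(W)P$; in other words, the recurrence of the torsion is nothing but the recurrence of the generator $P$ with respect to $\overset{1}{\nabla}$.

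Finally I would invoke that $P$ is unit timelike, i.e. that $g(P,P)=\pi(P)=-1$ is constant. Differentiating this constant along an arbitrary $W$ with the metric connection $\overset{1}{\nabla}$ gives $0=2g(\overset{1}{\nabla}_W P,P)=2\psi(W)g(P,P)=-2\psi(W)$, whence $\psi\equiv0$ and therefore $\overset{1}{\nabla}P=0$. By the characterization recalled just after \eqref{eq:nabla1XP}, this is exactly the assertion that $\overset{1}{\nabla}$ is a semi-symmetric metric $P$-connection. The crux of the proof is the first step, namely showing that the recurrence of the $(1,2)$-torsion tensor collapses to a recurrence of the single covector $\pi$; once that reduction is secured, the metric compatibility of $\overset{1}{\nabla}$ together with the unit timelike normalization makes the recurrence coefficient $\psi$ vanish automatically, which is precisely why the hypothesis $g(P,P)=-1$ is indispensable.
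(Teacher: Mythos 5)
Your argument is correct and follows exactly the route the paper intends: the paper gives no written proof beyond citing equation (2.18) of \cite{ucd2024}, which is precisely the identity $(\overset{1}{\nabla}_W \overset{1}{T})(X,Y)=(\overset{1}{\nabla}_W \pi)(Y)X-(\overset{1}{\nabla}_W \pi)(X)Y$ that you derive, and the subsequent steps (recurrence of $\overset{1}{T}$ collapses to recurrence of $\pi$, metric compatibility converts this to $\overset{1}{\nabla}_W P=\psi(W)P$, and the normalization $g(P,P)=-1$ kills $\psi$) are the standard completion of that deduction. Your write-up is in fact more self-contained than the paper's, but it is the same proof.
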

	
	Therefore, Theorem \ref{thm:SSRicttGRW} is an equivalent formulation of Theorem \ref{thm:SSPGRW} or Corollary \ref{cor:CSSGRW}. Hereinafter, by $(\mathcal{M},g,\overset{1}{\nabla})$ we will denote an $n$-dimensional GRW space-time with a semi-symmetric metric $P$-connection.

	\section{Curvature properties}\label{section4}

	Based on relations (\ref{eq:R0css}) - (\ref{eq:R5css}) we derive the equations for the curvature tensors in a GRW space-time $(\mathcal{M},g,\overset{1}{\nabla})$.
	
	\begin{theorem}
		In a GRW space-time $(\mathcal{M},g,\overset{1}{\nabla})$, curvature tensors $\overset{\theta}{R}$, $\theta=0,1,2,\ldots,5$, and the Riemannian curvature tensor $\overset{g}{R}$ are related by equations
		\begin{align}
			\label{eq:R0cssLn}
			\overset{0}{R} (X,Y)Z  = &  \overset{g}{R} (X,Y)Z  +g(X,Z)Y   - g(Y,Z) X - \frac{1}{4}\pi(Z)(\pi(Y)X - \pi(X)Y),
			\\
			\label{eq:RbetacssLn}
			\overset{\beta}{R} (X,Y)Z  = & \overset{g}{R} (X,Y)Z + g(X,Z)Y   - g(Y,Z) X, \;\; \beta=1,2,3,
			\\
			\label{eq:R4cssLn}
			\begin{split}
				\overset{4}{R} (X,Y)Z  =  & \overset{g}{R} (X,Y)Z  + g(X,Z)Y   - g(Y,Z) X - \pi(Z)(\pi(Y)X - \pi(X)Y),
			\end{split}
			\\
			\label{eq:R5cssLn}
			\overset{5}{R} (X,Y)Z  = & \overset{g}{R} (X,Y)Z + g(X,Z)Y   - g(Y,Z) X  -\frac{1}{2}\pi(Y)(\pi(Z)X - \pi(X)Z).
		\end{align}
	\end{theorem}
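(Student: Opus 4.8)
The plan is to obtain these formulas as a direct specialization of the general curvature identities (\ref{eq:R0css})--(\ref{eq:R5css}), established in the earlier theorem for an arbitrary concircularly semi-symmetric metric connection, to the present GRW setting. The key observation, already made in Section~\ref{section3}, is that when the generator $P$ is unit timelike we have $\pi(P)=g(P,P)=-1$, and differentiating this constraint together with (\ref{eq:conditionforconcircularmapping}) forces $\omega=1$. Thus the whole proof reduces to inserting the two numerical values $\omega=1$ and $\pi(P)=-1$ into (\ref{eq:R0css})--(\ref{eq:R5css}) and simplifying the resulting scalar coefficients.

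First I would record the values of the scalar combinations that appear in those identities: with $\omega=1$ and $\pi(P)=-1$ one has $3\omega+\pi(P)=2$, $2\omega+\pi(P)=1$, and of course $\omega=1$. Substituting $\tfrac{1}{2}(3\omega+\pi(P))=1$ into (\ref{eq:R0css}) and (\ref{eq:R5css}) immediately yields (\ref{eq:R0cssLn}) and (\ref{eq:R5cssLn}), since the quadratic terms $-\tfrac{1}{4}\pi(Z)(\pi(Y)X-\pi(X)Y)$ and $-\tfrac{1}{2}\pi(Y)(\pi(Z)X-\pi(X)Z)$ are left unchanged. Likewise, $2\omega+\pi(P)=1$ turns (\ref{eq:R1css}) into the stated $\overset{1}{R}$, and $\omega=1$ turns (\ref{eq:R2css}) into the stated $\overset{2}{R}$; both coincide with the $\beta=1,2$ cases of (\ref{eq:RbetacssLn}). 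Adding the extra term $-\pi(Z)(\pi(Y)X-\pi(X)Y)$ to the simplified $\overset{3}{R}$ expression then produces (\ref{eq:R4cssLn}).

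The one step that is not pure substitution is verifying that $\overset{3}{R}$ collapses to the same form as $\overset{1}{R}$ and $\overset{2}{R}$, since (\ref{eq:R3css}) contains the extra mixed terms $-\omega\,g(Y,X)Z$ and $\pi(P)\,g(X,Y)Z$. Here I would expand the bracketed expression in (\ref{eq:R3css}) under $\omega=1$, $\pi(P)=-1$:
\begin{equation*}
	\bigl(2g(X,Z)Y-g(Y,X)Z-g(Y,Z)X\bigr)-\bigl(g(X,Z)Y-g(X,Y)Z\bigr),
\end{equation*}
and then use the symmetry $g(X,Y)=g(Y,X)$ of the metric to cancel the two coefficients of $Z$, leaving exactly $g(X,Z)Y-g(Y,Z)X$. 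This cancellation is the only place where the argument is not mechanical, and it is precisely what allows $\overset{3}{R}$ to be merged with $\overset{1}{R}$ and $\overset{2}{R}$ in the single line (\ref{eq:RbetacssLn}); I expect it to be the main, though minor, obstacle. All remaining equalities are term-by-term comparisons requiring no further identities.
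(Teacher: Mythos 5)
Your proposal is correct and follows exactly the paper's route: the paper likewise obtains these formulas by substituting $\omega=1$ and $\pi(P)=-1$ (established in Section \ref{section3}) into the general relations (\ref{eq:R0css})--(\ref{eq:R5css}), and your verification of the cancellation of the $g(Y,X)Z$ and $g(X,Y)Z$ terms in the $\overset{3}{R}$ case is the only non-mechanical step, carried out correctly.
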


	\begin{theorem}
		In a GRW space-time $(\mathcal{M},g,\overset{1}{\nabla})$, Ricci tensors $\overset{\theta}{R}ic$, $\theta=0,1,2,\ldots,5$, and the Ricci tensor $\overset{g}{R}ic$ are related by equations
		\begin{align}
			\label{eq:Ric0cssLn}
			\overset{0}{R}ic  = &  \overset{g}{R}ic  -\frac{n-1}{4} (4g + \pi\otimes\pi ) ,
			\\
			\label{eq:RicbetacssLn}
			\overset{\beta}{R}ic = & \overset{g}{R}ic - (n-1) g, \;\; \beta=1,2,3,
			\\
			\label{eq:Ric4cssLn}
			\overset{4}{R}ic = & \overset{g}{R}ic - (n-1)  (g + \pi\otimes\pi ),
			\\
			\label{eq:Ric5cssLn}
			\overset{5}{R}ic = & \overset{g}{R}ic- \frac{n-1}{2} (2g + \pi\otimes\pi ).
		\end{align}
	\end{theorem}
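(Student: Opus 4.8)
The plan is to obtain each Ricci tensor by contracting the corresponding curvature identity (\ref{eq:R0cssLn})--(\ref{eq:R5cssLn}) over the vector $X$, exactly as the general Ricci relations (\ref{eq:Ric0css})--(\ref{eq:Ric5css}) were obtained from (\ref{eq:R0css})--(\ref{eq:R5css}). Since $\overset{\theta}{R}ic(Y,Z)$ is the trace of the endomorphism $X\mapsto\overset{\theta}{R}(X,Y)Z$ and $\overset{g}{R}$ contracts to $\overset{g}{R}ic$, the whole computation reduces to tracing the algebraic correction terms that distinguish $\overset{\theta}{R}$ from $\overset{g}{R}$ in each line.

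The key step is to record how each type of correction term contracts. For the metric terms, tracing $X\mapsto g(X,Z)Y$ gives $g(Y,Z)$, while tracing $X\mapsto g(Y,Z)X$ gives $n\,g(Y,Z)$; hence $g(X,Z)Y-g(Y,Z)X$ contributes $-(n-1)g(Y,Z)$. For the quadratic terms, tracing $X\mapsto\pi(X)Y$ gives $\pi(Y)$, whereas tracing $X\mapsto\pi(Z)\pi(Y)X$ (or $X\mapsto\frac{1}{2}\pi(Y)\pi(Z)X$, etc.) picks up the factor $n$; consequently every expression of the shape $\pi(Y)X-\pi(X)Y$ or $\pi(Z)X-\pi(X)Z$ contracts, after multiplication by the relevant $\pi(\cdot)$, to $-(n-1)$ times the corresponding component of $\pi\otimes\pi$. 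The only place an error could creep in is the bookkeeping: one must keep track of which argument is being contracted against the free output vector, producing the factor $n$, versus against a one-form or the metric, producing the factor $1$.

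With these two contraction rules in hand, each line follows by inspection. Equation (\ref{eq:RbetacssLn}) gives $\overset{\beta}{R}ic=\overset{g}{R}ic-(n-1)g$ for $\beta=1,2,3$; the coefficient $-\frac{1}{4}$ in (\ref{eq:R0cssLn}) and $-\frac{1}{2}$ in (\ref{eq:R5cssLn}) pass through the contraction to yield the terms $-\frac{n-1}{4}\pi\otimes\pi$ and $-\frac{n-1}{2}\pi\otimes\pi$ respectively, while the unit coefficient in (\ref{eq:R4cssLn}) yields $-(n-1)\pi\otimes\pi$. Collecting the metric and quadratic contributions then produces exactly (\ref{eq:Ric0cssLn})--(\ref{eq:Ric5cssLn}).

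As an independent check, one may instead specialise the already-established general formulas (\ref{eq:Ric0css})--(\ref{eq:Ric5css}) to the GRW setting, where Section~\ref{section3} gives $\omega=1$ and $\pi(P)=g(P,P)=-1$, hence $3\omega+\pi(P)=2$ and $2\omega+\pi(P)=1$; substituting these values reproduces the four displayed identities verbatim. There is no substantive obstacle here, only the routine care described above regarding signs and the $n$ versus $1$ factor in the traces.
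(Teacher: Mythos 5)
Your proposal is correct and follows essentially the same route the paper takes implicitly: the paper states these formulas without a written proof, obtaining them by specialising the general relations (\ref{eq:Ric0css})--(\ref{eq:Ric5css}) to $\omega=1$, $\pi(P)=-1$ (equivalently, by contracting (\ref{eq:R0cssLn})--(\ref{eq:R5cssLn}) over $X$), which is exactly your computation. Your contraction bookkeeping (the $n$ versus $1$ trace factors) and the resulting coefficients $-\frac{n-1}{4}$, $-(n-1)$, $-\frac{n-1}{2}$ all check out.
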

	
	Equation (\ref{eq:RbetacssLn}) shows that the curvature tensors $\overset{1}{R}$, $\overset{2}{R}$ and $\overset{3}{R}$ coincide. Therefore, in what follows, we will use only $\overset{1}{R}$ and $\overset{1}{R}ic$ instead of $\overset{\beta}{R}$ and $\overset{\beta}{R}ic$, $\beta=1,2,3$, respectively.
	
	The parallelism of the vector $P$ with respect to the connection $\overset{1}{\nabla}$, readily yields that equations 
	\begin{align*}
		\overset{1}{R} (X,Y)P & = \overset{1}{R} (P,Y)Z=0, \\
		\pi(\overset{1}{R} (X,Y)Z) & =0, \\
		\overset{1}{R}ic (P,X) & =0,
	\end{align*}
	hold in a GRW space-time $(\mathcal{M},g,\overset{1}{\nabla})$, confirming the results from \cite{yilmaz2023}. On the other hand, the Riemannian curvature tensor $\overset{g}{R}$ and the Ricci tensor $\overset{g}{R}ic$, in a GRW space-time $(\mathcal{M},g,\overset{1}{\nabla})$, satisfy  equations
	\begin{align}\label{eq:RXYP}
		\overset{g}{R} (X,Y)P & = \pi(Y)X-\pi(X)Y= \overset{1}{T} (X,Y), \\
		\label{eq:RPYZ}
		\overset{g}{R} (P,Y)Z & = g(Y,Z)P - \pi(Z)Y, \\
		\label{eq:piRXYZ}
		\pi(\overset{g}{R} (X,Y)Z) & = \pi(X)g(Y,Z)-\pi(Y)g(X,Z)=g(\overset{1}{T} (X,Y),Z)= \overset{1}{T} (X,Y,Z), \\
		\label{eq:RicPX}
		\overset{g}{R}ic(P,X) & = (n-1)\pi(X),
	\end{align}
	which can be found in the equivalent form in \cite{chaubeysuhde2020,siddiqi2019}. Taking into account the previous equations, we can easily show that the curvature tensors $\overset{0}{R}$, $\overset{4}{R}$ and  $\overset{5}{R}$ have the following properties
	\begin{align*}
		4\overset{0}{R} (X,Y)P & = \overset{4}{R} (X,Y)P= 2\overset{5}{R} (X,P)Y = \overset{1}{T} (X,Y),\\
		4\overset{0}{R} (P,Y)Z & = \overset{4}{R} (P,Y)Z = 2\overset{5}{R} (P,Z)Y = - \pi(Z)\overset{g}{\nabla}_Y P,\\
		\overset{0}{R} (P,P)X & = \overset{4}{R} (P,P)X = \overset{5}{R} (P,X)P=0,\\
		\pi(\overset{\theta}{R} (X,Y)Z) & = 0, \theta=0,4,5,
	\end{align*}
	while the corresponding Ricci tensors satisfy the following relations
	\begin{equation*}
		4\overset{0}{R}ic(P,X) = \overset{4}{R}ic(P,X) = 2\overset{5}{R}ic(P,X) = (n-1)\pi(X).
	\end{equation*}
	
	These equations further imply that $\frac{n-1}{4}$, $(n-1)$ and $\frac{n-1}{2}$ are eigenvalues of the Ricci tensors $\overset{0}{R}ic$, $\overset{4}{R}ic$, $\overset{5}{R}ic$, respectively, corresponding to the eigenvector $P$. On the other hand, equation (\ref{eq:RicPX}) implies that $(n-1)$ is an eigenvalue of the Ricci tensor $\overset{g}{R}ic$ corresponding to the eigenvector $P$.
	
	Since $2\omega \ne -g(P,P)$, Theorem \ref{thm:transfconnCSSM} implies that the Riemannian curvature tensor $\overset{g}{R}$ is not invariant under the transformation of connections $\overset{g}{\nabla} \rightarrow \overset{1}{\nabla}$, so the following theorem is easily proven.

	\begin{theorem}
		In a GRW space-time $(\mathcal{M},g,\overset{1}{\nabla})$, the Riemannian curvature tensor $\overset{g}{R}$ cannot be invariant under the transformation of connections $\overset{g}{\nabla} \rightarrow \overset{\beta}{\nabla}$, $\beta=0,1,2$.
	\end{theorem}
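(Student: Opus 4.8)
The plan is to reduce everything to Theorem \ref{thm:transfconnCSSM} by substituting the specific values of $\omega$ and $g(P,P)$ that are forced in this setting. Recall that in a GRW space-time $(\mathcal{M},g,\overset{1}{\nabla})$ the associated vector $P$ is unit timelike, so $g(P,P)=\pi(P)=-1$, and it was established at the beginning of Section \ref{section3} that differentiating this relation forces $\omega=1$. Hence $2\omega=2$ while $-g(P,P)=1$, giving $2\omega\neq -g(P,P)$, and also $\omega=1\neq 0$.

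First I would treat the case $\beta=1$. By part 2) of Theorem \ref{thm:transfconnCSSM}, invariance of $\overset{g}{R}$ under $\overset{g}{\nabla}\rightarrow\overset{1}{\nabla}$ is equivalent to $2\omega=-g(P,P)$; since this equality fails, $\overset{g}{R}$ cannot be invariant under that transformation. Next, for $\beta=0$, I would invoke the contrapositive of part 1): that part asserts that invariance under $\overset{g}{\nabla}\rightarrow\overset{0}{\nabla}$ \emph{implies} $2\omega=-g(P,P)$, so the failure of this equality rules out invariance. Finally, for $\beta=2$, part 3) states that invariance under $\overset{g}{\nabla}\rightarrow\overset{2}{\nabla}$ is equivalent to $\omega=0$, which is impossible here since $\omega=1$.

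There is no real obstacle: the statement is a direct corollary of Theorem \ref{thm:transfconnCSSM} together with the two numerical facts $\omega=1$ and $g(P,P)=-1$ peculiar to the unit timelike case. The only point requiring a little care is the asymmetry between the three parts of Theorem \ref{thm:transfconnCSSM} --- part 1) is a one-sided implication, whereas parts 2) and 3) are equivalences --- so for $\beta=0$ one must argue via the contrapositive rather than directly. Once the values of $\omega$ and $g(P,P)$ are inserted, all three non-invariance conclusions follow immediately.
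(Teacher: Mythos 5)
Your proposal is correct and follows essentially the same route as the paper, which derives the theorem directly from Theorem \ref{thm:transfconnCSSM} together with the facts $\omega=1$ and $g(P,P)=-1$, so that $2\omega\neq -g(P,P)$ and $\omega\neq 0$. Your explicit case-by-case treatment (including the observation that part 1) is only a one-sided implication, so the $\beta=0$ case goes via the contrapositive) simply fills in what the paper leaves as ``easily proven.''
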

	
	If the curvature tensor $\overset{1}{R}$ vanishes, then the GRW space-time $(\mathcal{M},g,\overset{1}{\nabla})$ is locally isometric to a unit sphere $S^n(1)$ (see \cite{chaubeysuhde2020}). We now show that the other curvature tensors $\overset{\theta}{R}$, $\theta=0,4,5$, cannot vanish.
	
	\begin{theorem}\label{thm:Rrazlicitiodnule}
		In a GRW space-time $(\mathcal{M},g,\overset{1}{\nabla})$, the curvature tensors $\overset{0}{R}$, $\overset{4}{R}$ and $\overset{5}{R}$ are non-zero.
	\end{theorem}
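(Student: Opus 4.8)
The plan is to sidestep any new computation by invoking the pointwise identities collected immediately before the statement, namely
\[
4\overset{0}{R}(X,Y)P = \overset{4}{R}(X,Y)P = 2\overset{5}{R}(X,P)Y = \overset{1}{T}(X,Y),
\]
which express each of the three curvature tensors, evaluated on a suitable triple of arguments, as a nonzero constant multiple of the torsion tensor $\overset{1}{T}$. Consequently the entire claim reduces to the single observation that $\overset{1}{T}$ does not vanish identically: if I can exhibit vectors $X,Y$ with $\overset{1}{T}(X,Y)\neq 0$, then each of $\overset{0}{R}$, $\overset{4}{R}$ and $\overset{5}{R}$ produces a nonzero value and is therefore a non-zero tensor.

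First I would verify that $\overset{1}{T}\not\equiv 0$. Since $\overset{1}{T}(X,Y)=\pi(Y)X-\pi(X)Y$ with $\pi(\cdot)=g(\cdot,P)$, the vanishing of $\overset{1}{T}$ would force $\pi(Y)X=\pi(X)Y$ for all $X,Y$, hence $\pi=0$; but $P$ is unit timelike, so $\pi(P)=g(P,P)=-1\neq 0$, a contradiction. Concretely, because $n\geq 3$ there is a (spacelike) vector $Y$ orthogonal to $P$, i.e. $\pi(Y)=0$, and then
\[
\overset{1}{T}(P,Y)=\pi(Y)P-\pi(P)Y=Y\neq 0.
\]
Substituting $X=P$ into the displayed identities then gives $\overset{0}{R}(P,Y)P=\tfrac14 Y$, $\overset{4}{R}(P,Y)P=Y$ and $2\overset{5}{R}(P,P)Y=Y$, each of which is non-zero, completing the argument.

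There is essentially no genuine obstacle here, since the required relations have already been derived. The only point that warrants a moment's care is that $\overset{5}{R}$ is not antisymmetric in its first two slots---the term $-\tfrac12\pi(Y)(\pi(Z)X-\pi(X)Z)$ in equation (\ref{eq:R5cssLn}) breaks that symmetry---so that the quantity $\overset{5}{R}(P,P)Y$ appearing above is legitimately nonzero rather than forced to vanish. It is worth emphasizing the contrast with $\overset{1}{R}$: since $\overset{1}{\nabla}P=0$ yields $\overset{1}{R}(X,Y)P=0$, the tensor $\overset{1}{R}$ annihilates the generator $P$ and may vanish (precisely when the manifold is locally a unit sphere), whereas $\overset{0}{R}$, $\overset{4}{R}$ and $\overset{5}{R}$ retain the nonvanishing torsion information and hence can never be zero.
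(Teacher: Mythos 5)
Your proposal is correct and follows essentially the same route as the paper: the paper's proof likewise specializes the last argument to $P$ in equation (\ref{eq:R0cssLn}) and uses $\overset{g}{R}(X,Y)P=\overset{1}{T}(X,Y)$ to reduce the claim to the non-vanishing of the torsion $\overset{1}{T}$, which is exactly the content of the identities you invoke. The difference is only presentational: the paper argues by contradiction for $\overset{0}{R}$ alone (leaving the remaining cases implicit), whereas you treat all three tensors uniformly and correctly note that $\overset{5}{R}(P,P)Y\neq 0$ is legitimate because $\overset{5}{R}$ is not antisymmetric in its first two slots.
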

	\begin{proof}
		We provide the proof for the curvature tensor of the zero kind $\overset{0}{R}$. If $\overset{0}{R}=0$ holds, then equation (\ref{eq:R0cssLn}) gives
		\begin{equation*}
			\overset{g}{R} (X,Y)Z  +g(X,Z)Y   - g(Y,Z) X - \frac{1}{4}\pi(Z)(\pi(Y)X - \pi(X)Y)=0.
		\end{equation*}
		Taking $Z=P$, from the previous equation we have
		\begin{equation*}
			\overset{g}{R} (X,Y)P  +\pi(X)Y   - \pi(Y) X - \frac{1}{4}\pi(Z)(\pi(Y)X - \pi(X)Y)=0.
		\end{equation*}
		Using equation (\ref{eq:RXYP}), we further obtain
		\begin{equation*}
			\overset{1}{T}(X,Y) - \overset{1}{T}(X,Y) - \frac{1}{4}\pi(Z)\overset{1}{T}(X,Y)=0,
		\end{equation*}
		from where $\overset{1}{T}\otimes\pi=0$, which is impossible.
	\end{proof}
	
	Similarly, if the Ricci tensors $\overset{0}{R}ic$, $\overset{4}{R}ic$ and $\overset{5}{R}ic$ vanish, then the properties of the Ricci tensor $\overset{g}{R}ic$ are violated.
	
	\begin{theorem}\label{thm:Ricirazlicitiodnule}
		In a GRW space-time $(\mathcal{M},g,\overset{1}{\nabla})$, the Ricci tensors $\overset{0}{R}ic$, $\overset{4}{R}ic$ and $\overset{5}{R}ic$ are non-zero.
	\end{theorem}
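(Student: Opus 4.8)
The plan is to argue by contradiction for each of the three tensors separately, exactly mirroring the proof of Theorem \ref{thm:Rrazlicitiodnule}, but now contracting against the distinguished vector $P$ rather than substituting $Z=P$ into a curvature identity. The essential ingredients are the expressions (\ref{eq:Ric0cssLn}), (\ref{eq:Ric4cssLn}) and (\ref{eq:Ric5cssLn}) relating each Ricci tensor to $\overset{g}{R}ic$, together with the established eigenvalue relation (\ref{eq:RicPX}), namely $\overset{g}{R}ic(P,X)=(n-1)\pi(X)$, and the normalisations $\pi(P)=g(P,P)=-1$ and $g(P,X)=\pi(X)$.

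First I would treat $\overset{0}{R}ic$. Assuming $\overset{0}{R}ic=0$, equation (\ref{eq:Ric0cssLn}) forces $\overset{g}{R}ic=\tfrac{n-1}{4}(4g+\pi\otimes\pi)$. Evaluating this at the pair $(P,X)$ and using $g(P,X)=\pi(X)$ and $\pi(P)=-1$ collapses the right-hand side to $\tfrac{3(n-1)}{4}\pi(X)$. Comparing with (\ref{eq:RicPX}) yields $\tfrac{3(n-1)}{4}\pi(X)=(n-1)\pi(X)$ for all $X$; choosing $X=P$, where $\pi(P)=-1\neq 0$, gives the numerical absurdity $\tfrac{3}{4}=1$, since $n\geq 3$ guarantees $n-1\neq 0$.

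The remaining two cases follow the same recipe. For $\overset{4}{R}ic$, setting it to zero in (\ref{eq:Ric4cssLn}) gives $\overset{g}{R}ic=(n-1)(g+\pi\otimes\pi)$, and the contraction against $P$ makes the right-hand side vanish identically (the $g$ and $\pi\otimes\pi$ contributions cancel because $\pi(P)=-1$), contradicting $\overset{g}{R}ic(P,X)=(n-1)\pi(X)$ evaluated at $X=P$. For $\overset{5}{R}ic$, the analogous computation from (\ref{eq:Ric5cssLn}) produces $\overset{g}{R}ic(P,X)=\tfrac{n-1}{2}\pi(X)$, again in conflict with (\ref{eq:RicPX}).

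There is no real obstacle here; the only point requiring attention is to contract with the eigenvector $P$ rather than tracing or testing on an arbitrary direction, so that the $\pi\otimes\pi$ term becomes active through $\pi(P)=-1$ and the assumed vanishing clashes with the fixed eigenvalue $n-1$ of $\overset{g}{R}ic$ on $P$. Equivalently, and perhaps more economically, one could simply invoke the already-recorded identities $4\overset{0}{R}ic(P,X)=\overset{4}{R}ic(P,X)=2\overset{5}{R}ic(P,X)=(n-1)\pi(X)$: setting $X=P$ gives $\overset{\theta}{R}ic(P,P)\neq 0$ at once for $\theta=0,4,5$, which immediately precludes the vanishing of any of these three tensors.
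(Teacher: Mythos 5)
Your argument is correct and follows essentially the same route as the paper: assume the Ricci tensor vanishes, solve for $\overset{g}{R}ic$ from the corresponding relation, evaluate against $P$, and contradict the eigenvalue identity $\overset{g}{R}ic(P,X)=(n-1)\pi(X)$ (the paper writes out only the $\overset{5}{R}ic$ case, whereas you cover all three and also note the immediate shortcut via $\overset{\theta}{R}ic(P,P)\neq 0$). No gaps.
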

	\begin{proof}
		For example, if the Ricci tensor $\overset{5}{R}ic$ vanishes, then equation (\ref{eq:Ric5cssLn}) implies
		\begin{equation*}
			\overset{g}{R}ic= \frac{n-1}{2} (2g + \pi\otimes\pi ).
		\end{equation*}
		From here it follows
		\begin{equation*}
			\overset{g}{R}ic(X,P)= \frac{n-1}{2} (2\pi(X) -\pi(X) ) = \frac{n-1}{2}\pi(X),
		\end{equation*}
		which contradicts equation (\ref{eq:RicPX}).
	\end{proof}
	
	Motivated by these results, we now consider weaker conditions than vanishing for the curvature tensors and Ricci tensors. Analogously to equation (\ref{eq:RputaB}), for a $(0,k)$-tensor field $B$ on $(\mathcal{M},g, \overset{1}{\nabla})$, $k\geq1$, we can define a tensor field $\overset{\theta}{\mathcal{R}}\cdot B$, $\theta\in\{0,1,4,5\}$, by equations
	\begin{equation}\label{eq:RthetaputaB}
		\begin{split}
			(\overset{\theta}{\mathcal{R}}\cdot B)(X_1,X_2,\dots,X_k;X,Y) & = (\overset{\theta}{R} (X,Y)\cdot B)(X_1,X_2,\dots,X_k) \\
			& = - \sum_{i=1}^{k} B(X_1,\dots,X_{i-1},\overset{\theta}{R} (X,Y)X_i,X_{i+1},\dots, X_k).
		\end{split}
	\end{equation}
	
	As shown in the diagram on page \pageref{dijagram1}, every Einstein manifold is Ricci semi-symmetric, but the converse is not necessarily true. We now prove the following theorem.
	
	\begin{theorem}\label{thm:GRWRicipolu-sim}
		A GRW space-time $(\mathcal{M},g,\overset{1}{\nabla})$ is a Ricci semi-symmetric manifold if and only if it is an Einstein manifold.
	\end{theorem}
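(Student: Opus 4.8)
The plan is to treat the two implications separately: the forward direction (Einstein $\Rightarrow$ Ricci semi-symmetric) is purely formal and holds on any manifold, while the converse carries all the content and is where the GRW structure enters through the identities (\ref{eq:RXYP}), (\ref{eq:piRXYZ}), (\ref{eq:RicPX}).

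For Einstein $\Rightarrow$ Ricci semi-symmetric, I would substitute $\overset{g}{R}ic=\lambda g$ into the defining expression (\ref{eq:RputaB}) for $\overset{g}{\mathcal{R}}\cdot\overset{g}{R}ic$ with $B=\overset{g}{R}ic$ and $k=2$. The two resulting summands are proportional to $\overset{g}{\mathcal{R}}(X,Y,X_1,X_2)$ and to $\overset{g}{\mathcal{R}}(X,Y,X_2,X_1)$, and these cancel by the antisymmetry of $\overset{g}{\mathcal{R}}$ in its last two slots. This needs no GRW hypothesis and merely reproduces the inclusion already recorded in the diagram on page \pageref{dijagram1}.

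For the converse I assume $\overset{g}{\mathcal{R}}\cdot\overset{g}{R}ic=0$, written out via (\ref{eq:RputaB}) as $\overset{g}{R}ic(\overset{g}{R}(X,Y)X_1,X_2)+\overset{g}{R}ic(X_1,\overset{g}{R}(X,Y)X_2)=0$, and specialize $X_1=P$. The first summand is evaluated with (\ref{eq:RXYP}), producing $\pi(Y)\overset{g}{R}ic(X,X_2)-\pi(X)\overset{g}{R}ic(Y,X_2)$; the second is evaluated by first applying (\ref{eq:RicPX}) and then (\ref{eq:piRXYZ}), producing $(n-1)\bigl(\pi(X)g(Y,X_2)-\pi(Y)g(X,X_2)\bigr)$. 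Collecting the terms, the identity becomes $\pi(Y)S(X,X_2)-\pi(X)S(Y,X_2)=0$, where $S:=\overset{g}{R}ic-(n-1)g$. The final step is to set $Y=P$ and use $\pi(P)=g(P,P)=-1$ together with $S(P,X_2)=\overset{g}{R}ic(P,X_2)-(n-1)\pi(X_2)=0$, which is exactly (\ref{eq:RicPX}); this collapses the relation to $S(X,X_2)=0$ for all $X,X_2$, i.e. $\overset{g}{R}ic=(n-1)g$. Since $n-1$ is constant, the manifold is Einstein in the sense fixed earlier in the paper.

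I expect no serious obstacle, but two points require care. First, one must choose the specializations in the right order (first $X_1=P$, then $Y=P$) so that all three structural identities become applicable and the second feeds cleanly into the vanishing of $S(P,\cdot)$. Second, it should be emphasized that the value of the Einstein constant is forced to be $\lambda=n-1$ by the eigenvalue equation (\ref{eq:RicPX}) along $P$; this is precisely what makes $S(P,\cdot)=0$, the linchpin of the argument, and it also explains why the general (non-GRW) converse fails, as noted in the diagram.
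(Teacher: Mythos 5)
Your proof is correct and follows essentially the same strategy as the paper's: specialize the Ricci semi-symmetry identity at the vector $P$, invoke the structural identities (\ref{eq:RXYP})--(\ref{eq:RicPX}), and conclude $\overset{g}{R}ic=(n-1)g$. The only difference is cosmetic — you insert $P$ into the first Ricci slot and then into $Y$, using (\ref{eq:RXYP}) and (\ref{eq:piRXYZ}), whereas the paper sets $X=V=P$ and uses (\ref{eq:RPYZ}) — and you additionally spell out the routine converse implication that the paper leaves to the diagram.
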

	\begin{proof}
		If the manifold $(\mathcal{M},g,\overset{1}{\nabla})$ is Ricci semi-symmetric, then 
		\begin{equation*}
			\overset{g}{R}ic(\overset{g}{R} (X,Y)U,V) + \overset{g}{R}ic(U,\overset{g}{R} (X,Y)V)=0,
		\end{equation*}
		holds. Replacing the vectors $X$ and $V$ with $P$ in this equation yields
		\begin{equation*}
			\overset{g}{R}ic(\overset{g}{R} (P,Y)U,P) + \overset{g}{R}ic(U,\overset{g}{R} (P,Y)P)=0.
		\end{equation*}
		Using equation (\ref{eq:RPYZ}), we then obtain
		\begin{equation*}
			g(Y,U)\overset{g}{R}ic(P,P)-\pi(U)\overset{g}{R}ic(Y,P)+ \pi(Y)\overset{g}{R}ic(U,P)+ \overset{g}{R}ic(Z,Y)=0.
		\end{equation*}
		Substituting (\ref{eq:RicPX}) into the previous equation, after rearranging, we arrive at the following form of the Ricci tensor
		\begin{equation*}
			\overset{g}{R}ic = (n-1)g.
		\end{equation*}
		Therefore, the observed manifold is Einstein.
	\end{proof}
	
	We now examine the relations $\overset{\theta}{\mathcal{R}}\cdot \overset{\theta}{R}ic$, $\theta=0,1,4,5$, defined by equation (\ref{eq:RthetaputaB}). First, we define the following Tachibana type tensor
	\begin{equation}\label{eq:QTachibana2}
		\begin{split}
			Q(\overset{g}{R}ic,\Pi)(X_1,X_2;X,Y) & = ((X\wedge_{\overset{g}{R}ic} Y)\cdot \Pi)(X_1,X_2) \\
			& = - \Pi((X\wedge_{\overset{g}{R}ic} Y)X_1, X_2) - \Pi(X_1, (X\wedge_{\overset{g}{R}ic} Y)X_2),
		\end{split}
	\end{equation}
	where $\Pi=\pi\otimes\pi$ and $(X\wedge_{\overset{g}{R}ic} Y)$ is defined by
	\begin{equation*}
		(X\wedge_{\overset{g}{R}ic} Y)Z=\overset{g}{R}ic(Y,Z)X-\overset{g}{R}ic(X,Z)Y.
	\end{equation*}

	Now we can prove the following theorem for the curvature tensors of a semi-symmetric metric $P$-connection.
	\begin{theorem} In a GRW space-time  $(\mathcal{M},g,\overset{1}{\nabla})$, the curvature tensors $\overset{0}{R}$, $\overset{1}{R}$, $\overset{4}{R}$ and the Ricci tensors $\overset{0}{R}ic$, $\overset{1}{R}ic$, $\overset{4}{R}ic$ satisfy the following relations
		\begin{align}\label{eq:R0putaRic0}
			\overset{0}{\mathcal{R}}\cdot \overset{0}{R}ic & = \overset{g}{\mathcal{R}}\cdot \overset{g}{R}ic - Q(g,\overset{g}{R}ic) - \frac{n-1}{4}Q(g,\Pi) + \frac{1}{4}Q(\overset{g}{R}ic,\Pi), \\
			\label{eq:R1putaRic1}
			\overset{1}{\mathcal{R}}\cdot \overset{1}{R}ic & = \overset{g}{\mathcal{R}}\cdot \overset{g}{R}ic - Q(g,\overset{g}{R}ic), \\
			\label{eq:R4putaRic4}
			\overset{4}{\mathcal{R}}\cdot \overset{4}{R}ic & = \overset{g}{\mathcal{R}}\cdot \overset{g}{R}ic - Q(g,\overset{g}{R}ic) - (n-1)Q(g,\Pi) + Q(\overset{g}{R}ic,\Pi),
		\end{align}
		where $\Pi=\pi\otimes\pi$, and $Q(\cdot,\cdot)$ are Tachibana type tensors defined by equations (\ref{eq:QTachibana}) and (\ref{eq:QTachibana2}), respectively.
	\end{theorem}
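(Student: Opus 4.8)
The plan is to compute directly from the defining formula (\ref{eq:RthetaputaB}) taken with $k=2$ and $B=\overset{\theta}{R}ic$, that is from $(\overset{\theta}{\mathcal{R}}\cdot\overset{\theta}{R}ic)(U,V;X,Y)=-\overset{\theta}{R}ic(\overset{\theta}{R}(X,Y)U,V)-\overset{\theta}{R}ic(U,\overset{\theta}{R}(X,Y)V)$, and then replace the $P$-connection data by their Levi-Civita expressions. First I would rewrite the correction terms in (\ref{eq:R0cssLn}), (\ref{eq:RbetacssLn}) and (\ref{eq:R4cssLn}) as wedge endomorphisms: since $g(X,Z)Y-g(Y,Z)X=-(X\wedge_g Y)Z$ and $\pi(Z)(\pi(Y)X-\pi(X)Y)=(X\wedge_\Pi Y)Z$, where $(X\wedge_\Pi Y)Z:=\Pi(Y,Z)X-\Pi(X,Z)Y$ is the wedge built from $\Pi=\pi\otimes\pi$ exactly as in (\ref{eq:QTachibana}) and (\ref{eq:QTachibana2}), one gets $\overset{1}{R}(X,Y)=\overset{g}{R}(X,Y)-(X\wedge_g Y)$, while $\overset{0}{R}(X,Y)$ and $\overset{4}{R}(X,Y)$ additionally carry the terms $-\frac14(X\wedge_\Pi Y)$ and $-(X\wedge_\Pi Y)$. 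By (\ref{eq:Ric0cssLn}), (\ref{eq:RicbetacssLn}) and (\ref{eq:Ric4cssLn}) each Ricci tensor has the shape $\overset{\theta}{R}ic=\overset{g}{R}ic+c\,g+d\,\Pi$.

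Substituting these forms and expanding by linearity, I would split the result according to the three summands $\overset{g}{R}ic$, $c\,g$ and $d\,\Pi$ of $\overset{\theta}{R}ic$. The $\overset{g}{R}ic$-summand reconstitutes $\overset{g}{\mathcal{R}}\cdot\overset{g}{R}ic$ from the Levi-Civita curvature operator, contributes $-Q(g,\overset{g}{R}ic)$ from the $g$-wedge directly through (\ref{eq:QTachibana}), and contributes a multiple of $Q(\overset{g}{R}ic,\Pi)$ from the $\Pi$-wedge once one verifies the identity $\overset{g}{R}ic((X\wedge_\Pi Y)U,V)+\overset{g}{R}ic(U,(X\wedge_\Pi Y)V)=Q(\overset{g}{R}ic,\Pi)(U,V;X,Y)$, which follows by matching the four $\pi$-and-$\overset{g}{R}ic$ monomials against (\ref{eq:QTachibana2}) and using the symmetry of $\overset{g}{R}ic$. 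In the $c\,g$-summand the skew-symmetry of $\overset{g}{R}(X,Y)$ (from $\overset{g}{\mathcal{R}}(X,Y,U,V)=-\overset{g}{\mathcal{R}}(X,Y,V,U)$) and of $(X\wedge_g Y)$ with respect to $g$ annihilates the Levi-Civita and $g$-wedge contributions, so only the non-$g$-skew $\Pi$-wedge piece survives; I would identify it with a multiple of $Q(g,\Pi)$ via $g((X\wedge_\Pi Y)U,V)+g((X\wedge_\Pi Y)V,U)=Q(g,\Pi)(U,V;X,Y)$.

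The step I expect to be the main obstacle is the $d\,\Pi$-summand, since a priori it could produce quartic-in-$\pi$ terms absent from the claimed formulas. The key observation is that $\pi$ annihilates the entire $P$-connection curvature operator: by (\ref{eq:piRXYZ}) the identity $\pi(\overset{g}{R}(X,Y)U)=\pi(X)g(Y,U)-\pi(Y)g(X,U)$ cancels exactly against $\pi((X\wedge_g Y)U)=g(Y,U)\pi(X)-g(X,U)\pi(Y)$, while $\pi((X\wedge_\Pi Y)U)=0$ identically; hence $\pi(\overset{\theta}{R}(X,Y)U)=0$ for $\theta=0,1,4$, so $\Pi(\overset{\theta}{R}(X,Y)U,V)=\pi(\overset{\theta}{R}(X,Y)U)\pi(V)=0$ and the whole $d\,\Pi$-summand vanishes. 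This is precisely what eliminates any residual $\Pi\otimes\Pi$ contribution and forces the clean form of the statement. Finally I would collect the three contributions with the coefficients $c,d$ read off from (\ref{eq:Ric0cssLn})--(\ref{eq:Ric4cssLn}): for $\theta=1$ one has $d=0$, both $\Pi$-terms drop out, and (\ref{eq:R1putaRic1}) emerges as the simplest case, while for $\theta=0$ and $\theta=4$ the surviving multiples of $Q(g,\Pi)$ and $Q(\overset{g}{R}ic,\Pi)$ assemble into (\ref{eq:R0putaRic0}) and (\ref{eq:R4putaRic4}).
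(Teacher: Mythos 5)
Your proposal is correct and follows essentially the same route as the paper: a direct expansion of $-\overset{\theta}{R}ic(\overset{\theta}{R}(X,Y)U,V)-\overset{\theta}{R}ic(U,\overset{\theta}{R}(X,Y)V)$ using the curvature and Ricci relations (\ref{eq:R0cssLn})--(\ref{eq:R4cssLn}) and (\ref{eq:Ric0cssLn})--(\ref{eq:Ric4cssLn}), followed by recognizing the resulting blocks as the Tachibana tensors of (\ref{eq:QTachibana}) and (\ref{eq:QTachibana2}); the paper carries this out explicitly only for $\theta=4$ via the long formula (\ref{eq:R4putaRic42}), while you organize the same computation through wedge endomorphisms and treat all three cases uniformly. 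The vanishing $\pi(\overset{\theta}{R}(X,Y)Z)=0$ that you isolate as the reason no quartic $\pi$-terms survive is likewise recorded in the paper (Section \ref{section4}), so no new ingredient is involved.
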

	\begin{proof}
		We present the proof for the last equation. Starting with
		\begin{equation*}
			(\overset{4}{R} (X,Y)\cdot \overset{4}{R}ic)(U,V) = - \overset{4}{R}ic(\overset{4}{R} (X,Y)U,V) - \overset{4}{R}ic(U,\overset{4}{R} (X,Y)V),
		\end{equation*}
		and using equations (\ref{eq:R4cssLn}) and (\ref{eq:Ric4cssLn}), we obtain
		\begin{equation}\label{eq:R4putaRic42}
			\begin{split}
				(\overset{4}{R} (X,Y)\cdot \overset{4}{R}ic)(U,V) = &  - \overset{g}{R}ic(\overset{g}{R} (X,Y)U,V) - \overset{g}{R}ic(U,\overset{g}{R} (X,Y)V) - (g(X,U) + \pi(X)\pi(U)) \overset{g}{R}ic(Y,V) \\
				& + (g(Y,U) + \pi(Y)\pi(U)) \overset{g}{R}ic(X,V) - (g(X,V) + \pi(X)\pi(V)) \overset{g}{R}ic(U,Y) \\
				& + (g(Y,V) + \pi(Y)\pi(V)) \overset{g}{R}ic(U,X) + (n-1) \pi(V)(\pi(X)g(Y,U) - \pi(Y)g(X,U)) \\
				& + (n-1) \pi(U)(\pi(X)g(Y,V) - \pi(Y)g(X,V) ),
			\end{split}
		\end{equation}
		from which it arises
		\begin{equation*}
			\overset{4}{\mathcal{R}}\cdot \overset{4}{R}ic  = \overset{g}{\mathcal{R}}\cdot \overset{g}{R}ic - Q(g,\overset{g}{R}ic) - (n-1)Q(g,\Pi) + Q(\overset{g}{R}ic,\Pi).
		\end{equation*}
	\end{proof}
	
	Based on the previous theorem, we have the following corollary.
	\begin{corollary}
		In a GRW space-time  $(\mathcal{M},g,\overset{1}{\nabla})$, the curvature tensor $\overset{1}{\mathcal{R}}$ and the Ricci tensor $\overset{1}{R}ic$ satisfy the relation $ \overset{1}{\mathcal{R}}\cdot \overset{1}{R}ic=0$ if and only if the manifold is Ricci pseudo-symmetric of constant type, whose the form is given by
		\begin{equation*}
			\overset{g}{\mathcal{R}}\cdot \overset{g}{R}ic = Q(g,\overset{g}{R}ic).
		\end{equation*}
	\end{corollary}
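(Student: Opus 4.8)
The plan is to derive both directions of the equivalence directly from the identity (\ref{eq:R1putaRic1}) established in the preceding theorem, namely $\overset{1}{\mathcal{R}}\cdot \overset{1}{R}ic = \overset{g}{\mathcal{R}}\cdot \overset{g}{R}ic - Q(g,\overset{g}{R}ic)$. Since this is a pointwise tensor identity holding on all of $\mathcal{M}$, the entire content of the corollary is an interpretation of when its left-hand side vanishes, and essentially no new computation is needed beyond what was already carried out for that theorem.

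For the forward implication I would assume $\overset{1}{\mathcal{R}}\cdot \overset{1}{R}ic=0$ and substitute into (\ref{eq:R1putaRic1}), obtaining at once $\overset{g}{\mathcal{R}}\cdot \overset{g}{R}ic = Q(g,\overset{g}{R}ic)$. The key point is to recognize this as the Ricci pseudo-symmetry condition $\overset{g}{\mathcal{R}}\cdot \overset{g}{R}ic = f_2\, Q(g,\overset{g}{R}ic)$ with the associated function being the constant $f_2\equiv 1$; since $f_2$ is constant, the manifold is Ricci pseudo-symmetric of constant type in the sense defined earlier in the excerpt. For the converse I would start from $\overset{g}{\mathcal{R}}\cdot \overset{g}{R}ic = Q(g,\overset{g}{R}ic)$ and substitute back into (\ref{eq:R1putaRic1}), so that the two right-hand terms cancel and $\overset{1}{\mathcal{R}}\cdot \overset{1}{R}ic=0$ follows immediately.

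The only genuine subtlety, rather than a real obstacle, is the bookkeeping around the definition of constant type: one should confirm that $f_2=1$ satisfies the defining relation precisely on the set $\mathcal{U}_2=\{x\in\mathcal{M}\mid Q(g,\overset{g}{R}ic)(x)\ne 0\}$ and that its value is independent of the point. Because (\ref{eq:R1putaRic1}) is valid everywhere on $\mathcal{M}$, this holds automatically, so the argument collapses to reading the stated equivalence off that single identity.
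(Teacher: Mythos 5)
Your proposal is correct and matches the paper's own reasoning: the corollary is stated there as an immediate consequence of equation (\ref{eq:R1putaRic1}), read off exactly as you do, with the constant-type condition realized by $f_2\equiv 1$. No further comment is needed.
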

	
	It was shown in \cite{chaubeysuhde2020} that $\overset{1}{\mathcal{R}}\cdot \overset{1}{R}ic= \overset{g}{\mathcal{R}}\cdot \overset{g}{R}ic$ if and only if the manifold is Ricci flat with respect to the semi-symmetric metric $P$-connection  $\overset{1}{\nabla}$ or the manifold is Einstein with respect to the metric $g$. We now investigate the implications of the tensors $ \overset{0}{\mathcal{R}}\cdot \overset{0}{R}ic$ and $ \overset{4}{\mathcal{R}}\cdot \overset{4}{R}ic$.

	\begin{theorem}\label{thm:RalphaputaRic=0}
		A GRW space-time  $(\mathcal{M},g,\overset{1}{\nabla})$ is an Einstein manifold if and only if $ \overset{\alpha}{\mathcal{R}}\cdot \overset{\alpha}{R}ic=0$, $\alpha=0,4$, holds.
	\end{theorem}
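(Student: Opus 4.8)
The plan is to reduce both implications to the identities (\ref{eq:R0putaRic0}) and (\ref{eq:R4putaRic4}) relating the connection quantities to the Levi-Civita ones, together with the eigenvalue relation (\ref{eq:RicPX}). Writing $S=\overset{g}{R}ic-(n-1)g$ and $\Pi=\pi\otimes\pi$, I would first record two facts to be used repeatedly: since $g$ is parallel for $\overset{g}{\nabla}$ and the curvature is skew in its last two arguments, $\overset{g}{\mathcal{R}}\cdot g=0$ and $Q(g,g)=0$; and since $\pi(X)=g(P,X)$, relation (\ref{eq:RicPX}) gives $S(P,X)=\overset{g}{R}ic(P,X)-(n-1)\pi(X)=0$, so the generator $P$ lies in the kernel of the symmetric tensor $S$.

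For the forward direction I would assume the space-time is Einstein, $\overset{g}{R}ic=\lambda g$. Setting $X=P$ and comparing with (\ref{eq:RicPX}) forces $\lambda=n-1$, hence $S=0$. It then follows that $\overset{g}{\mathcal{R}}\cdot\overset{g}{R}ic=(n-1)\,\overset{g}{\mathcal{R}}\cdot g=0$ and $Q(g,\overset{g}{R}ic)=(n-1)\,Q(g,g)=0$, while linearity of the wedge in its metric slot gives $Q(\overset{g}{R}ic,\Pi)=(n-1)\,Q(g,\Pi)$. Substituting these into (\ref{eq:R0putaRic0}) and (\ref{eq:R4putaRic4}) makes the $\Pi$-terms cancel, since $-\tfrac{n-1}{4}+\tfrac14(n-1)=0$ and $-(n-1)+(n-1)=0$ respectively, so both $\overset{0}{\mathcal{R}}\cdot\overset{0}{R}ic$ and $\overset{4}{\mathcal{R}}\cdot\overset{4}{R}ic$ vanish.

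For the converse I would set $A=\overset{g}{\mathcal{R}}\cdot\overset{g}{R}ic-Q(g,\overset{g}{R}ic)$, so that the two hypotheses become $A-\tfrac{n-1}{4}Q(g,\Pi)+\tfrac14 Q(\overset{g}{R}ic,\Pi)=0$ and $A-(n-1)Q(g,\Pi)+Q(\overset{g}{R}ic,\Pi)=0$. Multiplying the first by $4$ and subtracting the second gives $3A=0$, hence $A=0$; feeding this back into either equation and using linearity of $Q$ in its first argument yields
\begin{equation*}
Q(\overset{g}{R}ic-(n-1)g,\Pi)=Q(S,\Pi)=0 .
\end{equation*}
The decisive step is then to extract $S=0$ from this. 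I would expand (\ref{eq:QTachibana2}) with the first argument replaced by $S$ and set $X=P$; the kernel property $S(P,\cdot)=0$ collapses $(P\wedge_S Y)U=S(Y,U)P$, and together with $\pi(P)=-1$ this gives
\begin{equation*}
0=Q(S,\Pi)(U,V;P,Y)=S(Y,U)\pi(V)+\pi(U)S(Y,V).
\end{equation*}
Choosing $V=P$ and once more using $S(Y,P)=0$ and $\pi(P)=-1$ leaves $S(Y,U)=0$ for all $Y,U$, that is $\overset{g}{R}ic=(n-1)g$, so the manifold is Einstein.

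I expect the main obstacle to be precisely this last extraction. A priori $Q(S,\Pi)=0$ looks weak, because $\Pi=\pi\otimes\pi$ is degenerate, and it is not obvious that it should force $S$ to vanish identically rather than merely on some subspace. What rescues the argument is that $P$ is simultaneously in the kernel of $S$ (by (\ref{eq:RicPX})) and unit timelike ($\pi(P)=-1$); substituting $P$ into both the wedge slot and the $\Pi$-slot converts the degenerate condition into the full statement $S=0$. By comparison, the algebraic elimination of $A$ from (\ref{eq:R0putaRic0}) and (\ref{eq:R4putaRic4}) to isolate $Q(S,\Pi)=0$ is routine, and the forward direction is a direct substitution once $\overset{g}{R}ic=(n-1)g$ is fixed by the eigenvalue relation.
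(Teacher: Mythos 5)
Your forward direction coincides with the paper's in substance, but your converse takes a genuinely different route, and the difference matters for what is actually proved. The paper works with the raw expansion (\ref{eq:R4putaRic42}) of $(\overset{4}{R}(X,Y)\cdot\overset{4}{R}ic)(U,V)$, substitutes $X=V=P$, and uses (\ref{eq:RPYZ}) and (\ref{eq:RicPX}) to collapse everything to $\overset{g}{R}ic=(n-1)g$; in particular the single hypothesis $\overset{4}{\mathcal{R}}\cdot\overset{4}{R}ic=0$ already forces the manifold to be Einstein (and likewise for $\alpha=0$ separately). Your converse instead works at the level of the identities (\ref{eq:R0putaRic0}) and (\ref{eq:R4putaRic4}) and eliminates $A=\overset{g}{\mathcal{R}}\cdot\overset{g}{R}ic-Q(g,\overset{g}{R}ic)$ by taking $4\times$ the first minus the second; this is a clean piece of linear algebra on the Tachibana-type tensors, and your subsequent extraction of $S=0$ from $Q(S,\Pi)=0$ via $S(P,\cdot)=0$ and $\pi(P)=-1$ is correct. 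The price is that your elimination inherently requires \emph{both} conditions $\alpha=0$ and $\alpha=4$ to hold simultaneously, so you prove the equivalence only under the conjunctive reading of the statement, whereas the paper establishes the stronger per-$\alpha$ equivalence. This is not hard to repair within your own framework: from the single identity for $\alpha=4$ one has $A+Q(S,\Pi)=0$ with $S=\overset{g}{R}ic-(n-1)g$, and evaluating $A(U,V;P,Y)$ directly via (\ref{eq:RPYZ}) and (\ref{eq:RicPX}) (which is essentially what the paper does) shows that this single equation already yields $S=0$. Everything else in your argument -- the normalization $\lambda=n-1$ from (\ref{eq:RicPX}), the vanishing of $\overset{g}{\mathcal{R}}\cdot g$ and $Q(g,g)$, the linearity of $Q$ in its first slot, and the cancellation of the $\Pi$-terms in the forward direction -- is sound.
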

	\begin{proof}
		If $\overset{4}{\mathcal{R}}\cdot \overset{4}{R}ic=0$ holds, then from equation (\ref{eq:R4putaRic42}) and setting $X=V=P$, we get
		\begin{equation*}
			\begin{split}
				- \overset{g}{R}ic(\overset{g}{R} (P,Y)U,P) & - \overset{g}{R}ic(U,\overset{g}{R} (P,Y)P) - (g(P,U) + \pi(P)\pi(U)) \overset{g}{R}ic(Y,P) \\
				& + (g(Y,U) + \pi(Y)\pi(U)) \overset{g}{R}ic(P,P) - (g(P,P) + \pi(P)\pi(P)) \overset{g}{R}ic(U,Y) \\
				& + (g(Y,P) + \pi(Y)\pi(P)) \overset{g}{R}ic(U,P) + (n-1) \pi(P)(\pi(P)g(Y,U) - \pi(Y)g(P,U)) \\
				& + (n-1) \pi(U)(\pi(P)g(Y,P) - \pi(Y)g(P,P) ) = 0.
			\end{split}
		\end{equation*}
		Using the properties of the curvature tensor $\overset{g}{R}$ and Ricci tensor $\overset{g}{R}ic$ (specifically, equations (\ref{eq:RPYZ}) and (\ref{eq:RicPX})), after rearranging, we obtain
		\begin{equation}\label{eq:Ajnstajn}
			\overset{g}{R}ic = (n-1)g.
		\end{equation}
		This implies that such a manifold is Einstein. Conversely, if (\ref{eq:Ajnstajn}) holds, then (\ref{eq:R4putaRic42}) implies $\overset{4}{\mathcal{R}}\cdot \overset{4}{R}ic=0$.
	\end{proof}
	
	For the curvature tensor $\overset{5}{\mathcal{R}}$ and Ricci tensor $\overset{5}{R}ic$ we have the following statement.
	
	\begin{theorem}\label{thm:R5Ric5=RgRicg}
		A GRW space-time	$(\mathcal{M},g,\overset{1}{\nabla})$ is an Einstein manifold if and only if $ \overset{5}{\mathcal{R}}\cdot \overset{5}{R}ic $ $=\overset{g}{\mathcal{R}}\cdot \overset{g}{R}ic$.
	\end{theorem}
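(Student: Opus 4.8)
The plan is to mirror, step for step, the computation already carried out for $\overset{4}{\mathcal{R}}\cdot\overset{4}{R}ic$ in equation (\ref{eq:R4putaRic42}). I would start from the definition (\ref{eq:RthetaputaB}) applied to $B=\overset{5}{R}ic$, namely
\[
(\overset{5}{R}(X,Y)\cdot\overset{5}{R}ic)(U,V) = -\overset{5}{R}ic(\overset{5}{R}(X,Y)U,V) - \overset{5}{R}ic(U,\overset{5}{R}(X,Y)V),
\]
and then substitute the explicit forms (\ref{eq:R5cssLn}) for $\overset{5}{R}(X,Y)\cdot$ and (\ref{eq:Ric5cssLn}) for $\overset{5}{R}ic$. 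After expanding the products and grouping, I expect the purely metric-curvature part to reproduce $(\overset{g}{R}(X,Y)\cdot\overset{g}{R}ic)(U,V)$, while the remaining terms organize into Tachibana-type expressions built from $Q(g,\overset{g}{R}ic)$, $Q(g,\Pi)$ and $Q(\overset{g}{R}ic,\Pi)$ together with the asymmetric $\pi$-contractions coming from the $-\tfrac12\pi(Y)(\pi(Z)X-\pi(X)Z)$ piece of $\overset{5}{R}$. Thus the first goal is to produce a closed formula for the difference $\overset{5}{\mathcal{R}}\cdot\overset{5}{R}ic-\overset{g}{\mathcal{R}}\cdot\overset{g}{R}ic$ as an explicit combination of these tensors.

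For the \emph{if} direction I would substitute the Einstein condition $\overset{g}{R}ic=(n-1)g$ directly into that difference. Here the key simplifying identities are $Q(g,g)=0$ and $\overset{g}{\mathcal{R}}\cdot g=0$ (the latter from the skew-symmetry $g(\overset{g}{R}(X,Y)U,V)=-g(\overset{g}{R}(X,Y)V,U)$), so that $Q(g,\overset{g}{R}ic)=(n-1)Q(g,g)=0$ and the $\overset{g}{R}ic$-dependent terms collapse; one then checks that the residual $\pi$-terms also cancel, giving $\overset{5}{\mathcal{R}}\cdot\overset{5}{R}ic=\overset{g}{\mathcal{R}}\cdot\overset{g}{R}ic$. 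For the \emph{only if} direction I would assume equality, i.e. that the whole correction vanishes for all arguments, and then specialize by setting $X=V=P$, exactly as in the proof of Theorem \ref{thm:RalphaputaRic=0}. Invoking (\ref{eq:RPYZ}) and (\ref{eq:RicPX}) together with $\pi(P)=g(P,P)=-1$, the contractions at $P$ isolate $\overset{g}{R}ic(U,Y)$ and force $\overset{g}{R}ic=(n-1)g$, which is the Einstein condition.

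The main obstacle I anticipate is the bookkeeping of the $\pi$-dependent terms. Unlike the correction $-\pi(Z)(\pi(Y)X-\pi(X)Y)$ appearing in $\overset{4}{R}$, the term $-\tfrac12\pi(Y)(\pi(Z)X-\pi(X)Z)$ in $\overset{5}{R}$ does not enjoy the same antisymmetry in the first two slots, so several cancellations that were automatic in the $\overset{4}{R}$ computation must instead be verified explicitly by tracking which argument carries each copy of $\pi$. Consequently the safest route is not to guess a compact $Q$-form for the difference but to carry the fully expanded identity (the analogue of (\ref{eq:R4putaRic42})) and perform the $X=V=P$ specialization on that raw expression, where $\overset{5}{R}ic(P,\cdot)=\tfrac{n-1}{2}\pi(\cdot)$ and the vanishing $\pi(\overset{5}{R}(X,Y)Z)=0$ (recorded just after Theorem \ref{thm:Ricirazlicitiodnule}) eliminate the troublesome cross terms and leave the Einstein relation.
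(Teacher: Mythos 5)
Your proposal matches the paper's proof essentially step for step: the paper derives exactly the fully expanded identity you describe (its equation (\ref{eq:R5putaRic5}), the analogue of (\ref{eq:R4putaRic42})), obtains the Einstein condition $\overset{g}{R}ic=(n-1)g$ by the same $X=V=P$ specialization using (\ref{eq:RPYZ}) and (\ref{eq:RicPX}), and proves the converse by direct substitution of that condition. Your anticipated difficulty and chosen remedy (working with the raw expansion rather than a compact $Q$-form) is precisely what the paper does, so the argument is correct and not materially different.
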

	\begin{proof}
		For the tensor field $ \overset{5}{\mathcal{R}}\cdot \overset{5}{R}ic$ we have
		\begin{equation}\label{eq:R5putaRic5}
			\begin{split}
				(\overset{5}{R} (X,Y)\cdot \overset{5}{R}ic)(U,V) = &   (\overset{g}{R} (X,Y)\cdot \overset{g}{R}ic)(U,V) - \frac{n-1}{2}\pi(Y) ( \pi(V)g(X,U) +  \pi(U)g(X,V)) \\
				& + \frac{1}{2}\pi(Y) (\pi(V)\overset{g}{R}ic(U,X) + \pi(U)\overset{g}{R}ic(X,V)) \\
				& - \pi(Y)\pi(X) (\overset{g}{R}ic(U,V) - (n-1)g(U,V)) - g(X,U)\overset{g}{R}ic(Y,V) \\
				& + g(Y,U)\overset{g}{R}ic(X,V) - g(X,V)\overset{g}{R}ic(U,Y)+ g(Y,V)\overset{g}{R}ic(U,X).
			\end{split}
		\end{equation}
		Assume that $\overset{5}{\mathcal{R}}\cdot \overset{5}{R}ic=\overset{g}{\mathcal{R}}\cdot \overset{g}{R}ic$. Setting $X=V=P$, and using the equations (\ref{eq:RPYZ}) and (\ref{eq:RicPX}), we find, after rearranging, that the manifold is Einstein, with the Ricci tensor of the form (\ref{eq:Ajnstajn}). 
		
		Conversely, if we assume that (\ref{eq:Ajnstajn}) holds, then substituting this equation into (\ref{eq:R5putaRic5}) gives $\overset{5}{\mathcal{R}}\cdot \overset{5}{R}ic=\overset{g}{\mathcal{R}}\cdot \overset{g}{R}ic$.
	\end{proof}
	
	Since the following equation holds in every pseudo-Riemannian Einstein manifold (of dimension $n\geq4$) (see Theorem 3.1 in \cite{deszcz2001}) 
	\begin{equation*}
		\overset{g}{\mathcal{R}}\cdot	\overset{g}{\mathcal{C}} - \overset{g}{\mathcal{C}}\cdot \overset{g}{\mathcal{R}} = \frac{\overset{g}{r}}{n(n-1)}Q(g, \overset{g}{\mathcal{R}}) =\frac{\overset{g}{r}}{n(n-1)} Q(g, \overset{g}{\mathcal{C}}),
	\end{equation*}
	Theorems \ref{thm:RgCg-CgRg}, \ref{thm:RalphaputaRic=0} and \ref{thm:R5Ric5=RgRicg}, lead to the following corollary.
	\begin{corollary}
		In a GRW space-time $(\mathcal{M},g,\overset{1}{\nabla})$, dimension $n\geq4$, if either $ \overset{\alpha}{\mathcal{R}}\cdot \overset{\alpha}{R}ic=0$, $\alpha=0,4$, or $ \overset{5}{\mathcal{R}}\cdot \overset{5}{R}ic $ $=\overset{g}{\mathcal{R}}\cdot \overset{g}{R}ic$ holds, then the following relation
		\begin{equation*}
			\overset{g}{\mathcal{R}}\cdot	\overset{g}{\mathcal{C}} - \overset{g}{\mathcal{C}}\cdot \overset{g}{\mathcal{R}}=\frac{1}{n-1} Q(\overset{g}{R}ic, \overset{g}{\mathcal{R}}) = Q(g, \overset{g}{\mathcal{R}}) = Q(g, \overset{g}{\mathcal{C}}),
		\end{equation*}
		also holds.
	\end{corollary}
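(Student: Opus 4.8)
The plan is to reduce everything to the Einstein case and then invoke the two structural identities already available for Einstein manifolds. First I would note that each of the three hypotheses forces $(\mathcal{M},g,\overset{1}{\nabla})$ to be Einstein: the conditions $\overset{\alpha}{\mathcal{R}}\cdot \overset{\alpha}{R}ic=0$, $\alpha=0,4$, give Einstein by Theorem \ref{thm:RalphaputaRic=0}, while $\overset{5}{\mathcal{R}}\cdot \overset{5}{R}ic=\overset{g}{\mathcal{R}}\cdot \overset{g}{R}ic$ gives it by Theorem \ref{thm:R5Ric5=RgRicg}. Crucially, the proofs of both theorems do not merely yield \emph{some} Einstein constant but the specific form $\overset{g}{R}ic=(n-1)g$ of equation (\ref{eq:Ajnstajn}). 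Taking the trace then fixes the scalar curvature as $\overset{g}{r}=n(n-1)$.

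With the manifold now known to be Einstein and $n\geq4$, I would apply Theorem \ref{thm:RgCg-CgRg} directly, which furnishes the first equality
\[
\overset{g}{\mathcal{R}}\cdot \overset{g}{\mathcal{C}} - \overset{g}{\mathcal{C}}\cdot \overset{g}{\mathcal{R}}=\frac{1}{n-1} Q(\overset{g}{R}ic, \overset{g}{\mathcal{R}}).
\]
The second equality then follows from the linearity of the Tachibana tensor in its first argument: substituting $\overset{g}{R}ic=(n-1)g$ gives $Q(\overset{g}{R}ic,\overset{g}{\mathcal{R}})=(n-1)\,Q(g,\overset{g}{\mathcal{R}})$, so that $\tfrac{1}{n-1}Q(\overset{g}{R}ic,\overset{g}{\mathcal{R}})=Q(g,\overset{g}{\mathcal{R}})$.

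For the last equality $Q(g,\overset{g}{\mathcal{R}})=Q(g,\overset{g}{\mathcal{C}})$ I would invoke the Deszcz identity displayed immediately before the corollary, which on any Einstein manifold of dimension $n\geq4$ equates $\overset{g}{\mathcal{R}}\cdot \overset{g}{\mathcal{C}} - \overset{g}{\mathcal{C}}\cdot \overset{g}{\mathcal{R}}$ with both $\tfrac{\overset{g}{r}}{n(n-1)}Q(g,\overset{g}{\mathcal{R}})$ and $\tfrac{\overset{g}{r}}{n(n-1)}Q(g,\overset{g}{\mathcal{C}})$. Substituting $\overset{g}{r}=n(n-1)$ makes the common coefficient $\tfrac{\overset{g}{r}}{n(n-1)}$ equal to $1$, so the two $Q$-terms both coincide with $\overset{g}{\mathcal{R}}\cdot \overset{g}{\mathcal{C}} - \overset{g}{\mathcal{C}}\cdot \overset{g}{\mathcal{R}}$ and hence with each other, closing the chain of equalities.

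The argument is almost entirely mechanical once the Einstein property is in hand; the only point demanding care — the nearest thing to an obstacle — is to keep track of the \emph{exact} value $\lambda=n-1$ of the Einstein constant rather than an anonymous one, since it is precisely this value that makes both the factor $\tfrac{1}{n-1}$ in Theorem \ref{thm:RgCg-CgRg} and the Deszcz coefficient $\tfrac{\overset{g}{r}}{n(n-1)}$ collapse to unity. This special constant is not accidental: it is dictated by the GRW-with-parallel-timelike-vector structure through equation (\ref{eq:RicPX}), $\overset{g}{R}ic(P,X)=(n-1)\pi(X)$.
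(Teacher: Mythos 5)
Your proof is correct and follows exactly the route the paper intends: reduce each hypothesis to the Einstein condition $\overset{g}{R}ic=(n-1)g$ via Theorems \ref{thm:RalphaputaRic=0} and \ref{thm:R5Ric5=RgRicg}, apply Theorem \ref{thm:RgCg-CgRg}, use linearity of $Q$ in its first slot, and close with the Deszcz identity at $\overset{g}{r}=n(n-1)$. The paper leaves these steps implicit, but your explicit tracking of the Einstein constant $\lambda=n-1$ is precisely the point that makes all the coefficients collapse to $1$.
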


	\section{Perfect fluid space-time}\label{section5}
	
	A Lorentzian manifold is called a \textit{perfect fluid space-time} if the Ricci tensor $\overset{g}{R}ic$ has the form  
	\begin{equation}\label{eq:PF}
		\overset{g}{R}ic=ag+b\pi\otimes\pi,
	\end{equation}
	where $a$ and $b$ are scalars. Every RW space-time is a perfect fluid space-time \cite{neill1983}, while the converse has been explored in  \cite{dede2023,dede2024}. For dimension $n=4$, a GRW space-time is a perfect fluid space-time if and only if it is a RW space-time \cite{gutierrez2009}. In the geometric literature, a perfect fluid space-time is known as a \textit{quasi-Einstein manifold} (which may have a metric of arbitrary signature).
	
	It was shown in \cite{ucd2024} that a Lorentzian manifold with a semi-symmetric metric connection and a unit timelike torse-forming vector, whose curvature tensor vanishes, is a perfect fluid space-time (see Theorem 1.3 in \cite{ucd2024}).
	
	Here, we investigate the application of a semi-symmetric metric $P$-connection to a perfect fluid space-time.  Ricci solitons on Lorentzian manifolds with a semi-symmetric metric $P$-connection were studied in \cite{li2023Ln}, where, among other results, the following relation was established for perfect fluid space-times with a semi-symmetric metric $P$-connection
	\begin{equation}\label{eq:a-b}
		a-b=n-1.
	\end{equation}
	Furthermore, the Ricci tensor of such a space-time can be written in the form 
	\begin{equation}\label{eq:PF2}
		\overset{g}{R}ic = \left(\frac{\overset{g}{r}}{n-1} - 1\right) g + \left(\frac{\overset{g}{r}}{n-1} - n \right)\pi\otimes\pi.
	\end{equation}
	In general, the scalar curvature of a perfect fluid space-time with a semi-symmetric metric $P$-connection is not constant (see Lemma 3. in \cite{li2023Ln}). 
	
	It was demonstrated in \cite{chaki2000} that quasi-Einstein manifolds are not, in general, Ricci semi-symmetric. Every three-dimensional quasi-Einstein manifold is pseudo-symmetric \cite{deszcz1994}. We now prove the following theorem in an $n$-dimensional perfect fluid space-time with a semi-symmetric metric $P$-connection.
	
	
	\begin{theorem}\label{thm:Ricipseudosimetricankonst}
		A perfect fluid space-time $(\mathcal{M},g,\overset{1}{\nabla})$ is a Ricci pseudo-symmetric manifold of constant type which satisfies the relation
		\begin{equation*}
			\overset{g}{\mathcal{R}}\cdot \overset{g}{R}ic = Q(g, \overset{g}{R}ic).
		\end{equation*} 
	\end{theorem}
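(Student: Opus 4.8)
The plan is to exploit the perfect fluid form of the Ricci tensor together with the bilinearity of the two operations appearing in the identity. Writing $\overset{g}{R}ic = ag + b\,\pi\otimes\pi$ as in (\ref{eq:PF}), I would first observe that both $B \mapsto \overset{g}{\mathcal{R}}\cdot B$ and $B \mapsto Q(g,B)$ are linear in the $(0,2)$-tensor $B$. Hence it suffices to understand how each operation acts on the two building blocks $g$ and $\Pi = \pi\otimes\pi$ separately, and then to recombine.

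For the metric part, I would record the two vanishing identities $\overset{g}{\mathcal{R}}\cdot g = 0$ and $Q(g,g)=0$. The first is immediate from the definition (\ref{eq:RputaB}) and the skew-symmetry of $\overset{g}{\mathcal{R}}$ in its last two arguments (equivalently, the metric compatibility of the Levi-Civita connection); the second follows by expanding the endomorphism $X\wedge_g Y$ in (\ref{eq:QTachibana}). Consequently $\overset{g}{\mathcal{R}}\cdot\overset{g}{R}ic = b\,(\overset{g}{\mathcal{R}}\cdot\Pi)$ and $Q(g,\overset{g}{R}ic) = b\,Q(g,\Pi)$, so that the whole statement collapses to the single identity $\overset{g}{\mathcal{R}}\cdot\Pi = Q(g,\Pi)$.

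The core step is then to verify this last identity by direct expansion. Applying the definition (\ref{eq:RputaB}) with $B = \Pi$ produces terms of the shape $\pi(\overset{g}{R}(X,Y)U)\,\pi(V)$, and here the essential input is the GRW identity (\ref{eq:piRXYZ}), namely $\pi(\overset{g}{R}(X,Y)Z) = \pi(X)g(Y,Z) - \pi(Y)g(X,Z)$, which is available because $P$ is the unit timelike torse-forming generator. Substituting this turns the left-hand side into a combination of products of $\pi$ and $g$; expanding the right-hand side $Q(g,\Pi)$ via $(X\wedge_g Y)U = g(Y,U)X - g(X,U)Y$ yields exactly the same combination, so the two sides coincide term by term. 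This establishes $\overset{g}{\mathcal{R}}\cdot\overset{g}{R}ic = Q(g,\overset{g}{R}ic)$, i.e. the manifold is Ricci pseudo-symmetric with $f_2 \equiv 1$, a constant, and hence of constant type.

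I do not expect a genuine obstacle: the argument is a bookkeeping computation once the reduction to $\overset{g}{\mathcal{R}}\cdot\Pi = Q(g,\Pi)$ has been made. The only point requiring care is confirming that the coefficient in front of $Q(g,\overset{g}{R}ic)$ is literally $1$, and not some function of $a$, $b$ or $\overset{g}{r}$; this is guaranteed precisely because the $g$-parts drop out and the surviving factor $b$ cancels between the two sides. In particular, relations (\ref{eq:a-b}) and (\ref{eq:PF2}) are not needed for the identity itself, so the constancy of $f_2$ is structural rather than a consequence of the scalar curvature being constant (which, as already noted, it generally is not).
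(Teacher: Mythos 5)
Your proposal is correct and follows essentially the same route as the paper: both arguments reduce the claim to a direct expansion in which the $ag$ part of the Ricci tensor drops out by skew-symmetry of $\overset{g}{\mathcal{R}}$ (resp. of $X\wedge_g Y$), and both use the identity $\pi(\overset{g}{R}(X,Y)Z)=\pi(X)g(Y,Z)-\pi(Y)g(X,Z)$ as the decisive input to match the surviving $b\,\pi\otimes\pi$ terms. The only difference is presentational — you make the linearity-based reduction to $\overset{g}{\mathcal{R}}\cdot\Pi = Q(g,\Pi)$ explicit, whereas the paper carries out the full computation in one pass.
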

	\begin{proof}
		In a perfect fluid space-time (\ref{eq:PF}) we have
		\begin{equation*}
			\begin{split}
				(\overset{g}{R} (X,Y)\cdot \overset{g}{R}ic)(U,V) & =  - \overset{g}{R}ic(\overset{g}{R} (X,Y)U,V) - \overset{g}{R}ic(U,\overset{g}{R} (X,Y)V) \\
				& = - b \pi(\overset{g}{R} (X,Y)U)\pi(V) - b \pi(U)\pi(\overset{g}{R} (X,Y)V)
			\end{split}
		\end{equation*}
		where we used the anti-symmetry of the Riemannian curvature tensor, i.e. $\overset{g}{\mathcal{R}} (X,Y,U,V) = - \overset{g}{\mathcal{R}} (X,Y, V,U)$. Considering equation (\ref{eq:piRXYZ}), we obtain
		\begin{equation}\label{eq:RputaRicPF}
			\begin{split}
				(\overset{g}{R} (X,Y)\cdot \overset{g}{R}ic)(U,V)  =  b(&-g(Y,U)\pi(X)\pi(V)  + g(X,U)\pi(Y)\pi(V) \\
				&	- g(Y,V)\pi(X)\pi(U) + g(X,V)\pi(Y)\pi(U) ).
			\end{split}
		\end{equation}
		On the other hand, in a perfect fluid space-time (\ref{eq:PF}) we have
		\begin{equation*}
			\begin{split}
				Q(g, \overset{g}{R}ic)(U,V;X,Y)  = & -g(Y,U)\overset{g}{R}ic(X,V)  + g(X,U)\overset{g}{R}ic(Y,V) \\ 
				& - g(Y,V)\overset{g}{R}ic(X,U) + g(X,V)\overset{g}{R}ic(Y,U) \\
				= &  b(-g(Y,U)\pi(X)\pi(V)  + g(X,U)\pi(Y)\pi(V) \\
				& \quad - g(Y,V)\pi(X)\pi(U) + g(X,V)\pi(Y)\pi(U) ).
			\end{split}
		\end{equation*}
		The right-hand sides of the last two equations are equal, i.e. $\overset{g}{\mathcal{R}}\cdot \overset{g}{R}ic = Q(g, \overset{g}{R}ic)$.
	\end{proof}

	From equation (\ref{eq:PF2}), relation (\ref{eq:RputaRicPF}) can be written as
	\begin{equation*}
		\overset{g}{\mathcal{R}}\cdot \overset{g}{R}ic = \left(\frac{\overset{g}{r}}{n-1} - n \right)Q(g,\Pi),
	\end{equation*} 
	where $\Pi=\pi\otimes\pi$. Note that $Q(g,\Pi)=0$ cannot hold, because it would imply $\pi=0$, which is impossible. Combining the previous theorem and equation (\ref{eq:R1putaRic1}) yields the following corollary.
	\begin{corollary}
		A perfect fluid space-time $(\mathcal{M},g,\overset{1}{\nabla})$ satisfies the relation	$\overset{1}{\mathcal{R}}\cdot \overset{1}{R}ic=0$.
	\end{corollary}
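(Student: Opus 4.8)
The plan is to combine the identity already recorded for the curvature operators of the two connections with the Ricci pseudo-symmetry relation just established for the perfect fluid. The only genuine content is a one-line substitution, so the argument will be very short; the work has effectively been done in Theorem \ref{thm:Ricipseudosimetricankonst} and in equation (\ref{eq:R1putaRic1}).

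First I would note that the underlying manifold carries the GRW structure. By our standing convention $(\mathcal{M},g,\overset{1}{\nabla})$ denotes a GRW space-time equipped with a semi-symmetric metric $P$-connection (and in any case, since $P$ is unit timelike, Corollary \ref{cor:CSSGRW} guarantees this). Consequently the GRW forms (\ref{eq:RbetacssLn}) and (\ref{eq:RicbetacssLn}) of the curvature and Ricci tensors hold, and therefore equation (\ref{eq:R1putaRic1}),
\begin{equation*}
\overset{1}{\mathcal{R}}\cdot \overset{1}{R}ic = \overset{g}{\mathcal{R}}\cdot \overset{g}{R}ic - Q(g,\overset{g}{R}ic),
\end{equation*}
is available in the present setting.

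Next I would invoke Theorem \ref{thm:Ricipseudosimetricankonst}, which asserts that in this perfect fluid space-time
\begin{equation*}
\overset{g}{\mathcal{R}}\cdot \overset{g}{R}ic = Q(g,\overset{g}{R}ic).
\end{equation*}
Substituting this into the right-hand side of the identity above yields at once $\overset{1}{\mathcal{R}}\cdot \overset{1}{R}ic = Q(g,\overset{g}{R}ic) - Q(g,\overset{g}{R}ic) = 0$, which is the claim. There is essentially no obstacle here, as the two expressions are exact negatives of one another after the substitution; the only point meriting a moment's care is confirming that (\ref{eq:R1putaRic1}) may legitimately be applied, i.e. that the perfect fluid is a GRW space-time so that the GRW curvature relations are in force. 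Once that is observed, the cancellation is immediate and the corollary follows.
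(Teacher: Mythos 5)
Your proposal is correct and takes exactly the route the paper does: the corollary is obtained by combining Theorem \ref{thm:Ricipseudosimetricankonst} with equation (\ref{eq:R1putaRic1}) and cancelling the two $Q(g,\overset{g}{R}ic)$ terms. Your extra remark that the perfect fluid space-time is already assumed to be a GRW space-time with a semi-symmetric metric $P$-connection (so that (\ref{eq:R1putaRic1}) applies) matches the paper's standing convention.
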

	
	Since the Ricci tensors $\overset{0}{R}ic$, $\overset{4}{R}ic$, $\overset{5}{R}ic$ are non-zero (see Theorem \ref{thm:Ricirazlicitiodnule}), we now consider slightly weaker conditions, defining special classes of GRW space-times $(\mathcal{M},g,\overset{1}{\nabla})$.
	\begin{definition}\label{def:pftheta}
		A GRW space-time $(\mathcal{M},g,\overset{1}{\nabla})$ is a perfect fluid space-time of the $\theta$-th kind, $\theta=0,1,4,5$ if
		\begin{equation*}
			\overset{\theta}{R}ic=\overset{\theta}{a}g+\overset{\theta}{b}\pi\otimes\pi, \;\;\; \theta=0,1,4,5,
		\end{equation*}
		where $\overset{\theta}{a}$, $\overset{\theta}{b}$ are smooth functions.
	\end{definition}
	
	Contracting the previous equation yields
	\begin{equation*}
		\overset{\theta}{r}=\overset{\theta}{a}n+\overset{\theta}{b}, \;\;\; \theta=0,1,4,5.
	\end{equation*}
	
	Using the properties of the Ricci tensors $\overset{\theta}{R}ic$, $\theta=0,1,4,5$, we can obtain expressions for $\overset{\theta}{a}$, $\overset{\theta}{b}$, so the perfect fluid space-times of the $\theta$-th kind can be written in the following forms
	\begin{align*}
		\overset{0}{R}ic & = \left(\frac{\overset{0}{r}}{n-1} - \frac{1}{4} \right) g + \left(\frac{\overset{0}{r}}{n-1} - \frac{n}{4} \right)\pi\otimes\pi, \\
		\overset{1}{R}ic & =\frac{\overset{1}{r}}{n-1}(g+\pi\otimes\pi), \\
		\overset{4}{R}ic & = \left(\frac{\overset{4}{r}}{n-1} - 1 \right) g + \left(\frac{\overset{4}{r}}{n-1} - n \right) \pi\otimes\pi,\\
		\overset{5}{R}ic & = \left(\frac{\overset{5}{r}}{n-1} - \frac{1}{2} \right) g + \left(\frac{\overset{5}{r}}{n-1} - \frac{n}{2} \right)\pi\otimes\pi.
	\end{align*}
	
	The following theorem demonstrates that the perfect fluid space-times of the $\theta$-th kind coincide with the perfect fluid space-time (with respect to the metric $g$).	
	\begin{theorem}
		A GRW space-time $(\mathcal{M},g,\overset{1}{\nabla})$ is a perfect fluid space-time if and only if it is a perfect fluid space-time of the $\theta$-th kind, $\theta=0,1,4,5$.
	\end{theorem}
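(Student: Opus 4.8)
The plan is to read the result directly off the Ricci relations (\ref{eq:Ric0cssLn})--(\ref{eq:Ric5cssLn}), exploiting the crucial structural feature that in every case the difference $\overset{\theta}{R}ic-\overset{g}{R}ic$ is a fixed linear combination of $g$ and $\pi\otimes\pi$ whose coefficients depend only on the dimension $n$. Writing each relation in the unified form $\overset{\theta}{R}ic=\overset{g}{R}ic+c_\theta\, g+d_\theta\,\pi\otimes\pi$ for suitable constants $c_\theta,d_\theta$ (for instance $c_0=-(n-1)$, $d_0=-\tfrac{n-1}{4}$; $c_1=-(n-1)$, $d_1=0$; $c_4=-(n-1)$, $d_4=-(n-1)$; $c_5=-(n-1)$, $d_5=-\tfrac{n-1}{2}$), the whole statement reduces to a question of membership in the two-dimensional space $\mathrm{span}\{g,\pi\otimes\pi\}$ of symmetric $(0,2)$-tensors, which is genuinely two-dimensional because $g$ is nondegenerate while $\pi\otimes\pi$ has rank one.

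For the forward implication I would assume $(\mathcal{M},g,\overset{1}{\nabla})$ is a perfect fluid space-time, so $\overset{g}{R}ic=ag+b\,\pi\otimes\pi$ by (\ref{eq:PF}). Substituting this into the relation for a fixed $\theta\in\{0,1,4,5\}$ and collecting terms immediately yields
\begin{equation*}
    \overset{\theta}{R}ic=(a+c_\theta)\,g+(b+d_\theta)\,\pi\otimes\pi,
\end{equation*}
which is exactly the perfect fluid form of the $\theta$-th kind with $\overset{\theta}{a}=a+c_\theta$ and $\overset{\theta}{b}=b+d_\theta$. The converse runs the same computation in reverse: assuming $\overset{\theta}{R}ic=\overset{\theta}{a}\,g+\overset{\theta}{b}\,\pi\otimes\pi$ and solving the same relation for $\overset{g}{R}ic$ gives $\overset{g}{R}ic=(\overset{\theta}{a}-c_\theta)\,g+(\overset{\theta}{b}-d_\theta)\,\pi\otimes\pi$, so the manifold is a perfect fluid space-time with respect to $g$ with $a=\overset{\theta}{a}-c_\theta$ and $b=\overset{\theta}{b}-d_\theta$.

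There is no genuine obstacle here: the argument is a one-line substitution for each of the four values of $\theta$, and the only thing worth recording explicitly is the invertibility of the affine maps $(a,b)\mapsto(\overset{\theta}{a},\overset{\theta}{b})$, which is automatic since they are mere translations in the coefficient plane. To compress the write-up I would simply observe that adding a multiple of $g$ and a multiple of $\pi\otimes\pi$ preserves the class of tensors of the form $ag+b\,\pi\otimes\pi$, so that the perfect fluid condition on $\overset{g}{R}ic$ and the perfect fluid condition of the $\theta$-th kind on $\overset{\theta}{R}ic$ are equivalent, and then point to the explicit coefficient dictionary extracted from (\ref{eq:Ric0cssLn})--(\ref{eq:Ric5cssLn}) to confirm that no coefficient degenerates.
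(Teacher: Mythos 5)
Your proof is correct and follows essentially the same route as the paper: substitute the relations $\overset{\theta}{R}ic=\overset{g}{R}ic+c_\theta g+d_\theta\,\pi\otimes\pi$ read off from (\ref{eq:Ric0cssLn})--(\ref{eq:Ric5cssLn}) and observe that adding a fixed element of $\mathrm{span}\{g,\pi\otimes\pi\}$ preserves the perfect-fluid form in both directions. The paper merely adds one cosmetic step you omit — rewriting the resulting coefficients in terms of $\overset{\theta}{r}$ via (\ref{eq:r0css}) to match the canonical expressions displayed before the theorem — which is not needed for the equivalence itself.
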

	\begin{proof}
		We provide the proof for $\theta=0$. If a GRW space-time $(\mathcal{M},g,\overset{1}{\nabla})$ is a perfect fluid space-time, then equation (\ref{eq:PF2}) holds. Substituting this equation into (\ref{eq:Ric0cssLn}) yields
		\begin{equation*}
			\overset{0}{R}ic = \left(\frac{\overset{g}{r}}{n-1} -n \right) g + \left(\frac{\overset{g}{r}}{n-1} - \frac{5n-1}{4} \right)\pi\otimes\pi.
		\end{equation*}
		Using equation (\ref{eq:r0css}), i.e. $4\overset{0}{r} = 4\overset{g}{r}-(n-1)(4n-1)$, the previous equation we can rewrite as
		\begin{equation}\label{eq:Ric0PF}
			\overset{0}{R}ic  = \left(\frac{\overset{0}{r}}{n-1} - \frac{1}{4} \right) g + \left(\frac{\overset{0}{r}}{n-1} - \frac{n}{4} \right)\pi\otimes\pi,
		\end{equation}
		providing that $(\mathcal{M},g,\overset{1}{\nabla})$ is a perfect fluid space-time of the zeroth kind.
		
		Conversely, if (\ref{eq:Ric0PF}) holds, then substituting this equation into (\ref{eq:Ric0cssLn}) and using (\ref{eq:r0css}) gives (\ref{eq:PF2}).
	\end{proof}
	
 \begin{remark}
			The Definition \ref{def:pftheta} does not apply to any non-symmetric connections, because in the general case the Ricci tensors $\overset{\theta}{R}ic$ associated with such connections are not symmetric. In that case, one could take $sym\overset{\theta}{R}ic$ instead of $\overset{\theta}{R}ic$, where
			\begin{equation*}
				sym\overset{\theta}{R}ic(X,Y)=\frac{1}{2}(\overset{\theta}{R}ic(X,Y)+\overset{\theta}{R}ic(Y,X)).
			\end{equation*}
			For such manifolds with respect to the semi-symmetric metric connection you can see paper \cite{han2021}. The relation $\overset{\theta}{R}ic =sym \overset{\theta}{R}ic$ is valid for a concircularly semi-symmetric metric connection (and for a semi-symmetric metric $P$-connection), which justifies Definition \ref{def:pftheta}. 
	\end{remark}
	
	\section{Application to the theory of relativity}\label{section6}
	
	To apply the preceding results to the theory of relativity, we now consider a $GRW$ space-time  $(\mathcal{M},g,\overset{1}{\nabla})$ in dimension $n=4$. \textit{Einstein's field equations} are fundamental equations in the theory of relativity, connecting the curvature of a space-time to the mass and energy of matter. Here we focus on Einstein's field equations without the cosmological constant which read
	\begin{equation}\label{eq:EFE}
		\overset{g}{R}ic - \frac{\overset{g}{r}}{2} g = k\tau,
	\end{equation}
	where $\tau$ is the \textit{energy-momentum tensor} (of type $(0,2)$) and $k$ is the \textit{gravitational constant}.
	
	Space-times with a semi-symmetric energy-momentum tensor of the form $\overset{g}{\mathcal{R}}\cdot \tau=0$ were studied in \cite{ucdv2015}, while those
	with a pseudo-symmetric energy-momentum tensor of the form $\overset{g}{\mathcal{R}}\cdot \tau = f Q(g, \tau)$ were investigated in \cite{mallickde2016}. The following theorem was proven.     
	\begin{theorem}\cite{mallickde2016}
		A general relativistic space-time with a pseudo-symmetric energy-momentum tensor is Ricci pseudo-symmetric and vice-versa.
	\end{theorem}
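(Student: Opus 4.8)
The plan is to use Einstein's field equations (\ref{eq:EFE}) to pass freely between the Ricci tensor $\overset{g}{R}ic$ and the energy-momentum tensor $\tau$, and to exploit the fact that both derivation-type operators $\overset{g}{\mathcal{R}}\cdot(\cdot)$ and $Q(g,\cdot)$ are \emph{algebraic} (pointwise) and annihilate the metric tensor $g$. First I would rewrite (\ref{eq:EFE}) in the form $k\tau = \overset{g}{R}ic - \frac{\overset{g}{r}}{2}\,g$, exhibiting $\tau$ and $\overset{g}{R}ic$ as differing by a scalar multiple of $g$, and recall that the pseudo-symmetry of the energy-momentum tensor means $\overset{g}{\mathcal{R}}\cdot\tau = f\,Q(g,\tau)$ while Ricci pseudo-symmetry means $\overset{g}{\mathcal{R}}\cdot\overset{g}{R}ic = f\,Q(g,\overset{g}{R}ic)$.

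The crucial preliminary step is to establish the two vanishing identities $\overset{g}{\mathcal{R}}\cdot g = 0$ and $Q(g,g)=0$. Both follow at once from the antisymmetry of the Riemannian curvature tensor in its last two slots, $\overset{g}{\mathcal{R}}(X,Y,U,V) = -\overset{g}{\mathcal{R}}(X,Y,V,U)$: substituting $B=g$ into the definitions (\ref{eq:RputaB}) and (\ref{eq:QTachibana}) makes the two summands in each expression cancel. I would also note that $\overset{g}{R}(X,Y)\cdot$ and $(X\wedge_{g} Y)\cdot$ act as pointwise derivations, so the non-constant scalar $\frac{\overset{g}{r}}{2}$ simply factors out, without generating any derivative term.

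Applying $\overset{g}{\mathcal{R}}\cdot(\cdot)$ and $Q(g,\cdot)$ to the rewritten field equation and using the two vanishing identities then yields the key relations
\[
k\,\overset{g}{\mathcal{R}}\cdot\tau = \overset{g}{\mathcal{R}}\cdot\overset{g}{R}ic, \qquad k\,Q(g,\tau) = Q(g,\overset{g}{R}ic).
\]
From here the equivalence is immediate. If the space-time is Ricci pseudo-symmetric, $\overset{g}{\mathcal{R}}\cdot\overset{g}{R}ic = f\,Q(g,\overset{g}{R}ic)$, then substituting both relations and dividing by $k\neq 0$ gives $\overset{g}{\mathcal{R}}\cdot\tau = f\,Q(g,\tau)$, so $\tau$ is pseudo-symmetric with the \emph{same} function $f$; the converse follows by reversing the substitution. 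This also makes precise the ``and vice-versa'' in the statement, since the proportionality function is preserved in both directions.

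The computation is short, and I expect the only genuine point requiring care to be this pair of identities $\overset{g}{\mathcal{R}}\cdot g = 0$, $Q(g,g)=0$, together with the observation that the non-constancy of the scalar curvature $\overset{g}{r}$ does not obstruct the argument: because both operators are algebraic rather than differential, no derivative of $\overset{g}{r}$ ever enters, and the term $\frac{\overset{g}{r}}{2}\,g$ is killed cleanly. Confirming this is the main (minor) obstacle.
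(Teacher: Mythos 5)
Your argument is correct, and it is essentially the computation the paper itself uses: the paper cites this statement from \cite{mallickde2016} without reproving it, but its proof of the analogous theorem in Section \ref{section6} (relating $\overset{\theta}{\mathcal{R}}\cdot \tau = f Q(g,\tau)$ to $\overset{\theta}{\mathcal{R}}\cdot \overset{g}{R}ic = f Q(g,\overset{g}{R}ic)$) proceeds exactly as you do — substitute $k\tau=\overset{g}{R}ic-\frac{\overset{g}{r}}{2}g$ and let the antisymmetry $\overset{g}{\mathcal{R}}(X,Y,U,V)=-\overset{g}{\mathcal{R}}(X,Y,V,U)$ kill the metric term. Your packaging of that cancellation as the identities $\overset{g}{\mathcal{R}}\cdot g=0$ and $Q(g,g)=0$, together with the remark that the operators are tensorial so $\frac{\overset{g}{r}}{2}$ factors out without differentiation, is a clean and accurate way to present the same mechanism.
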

	This theorem directly implies the following statement.
	\begin{corollary}\cite{ucdv2015}
		A general relativistic space-time with a semi-symmetric energy-momentum tensor is Ricci semi-symmetric and vice-versa.
	\end{corollary}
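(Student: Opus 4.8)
The plan is to obtain the corollary as the $f=0$ specialization of the preceding theorem of Mallick--De, but I would first isolate the algebraic identity that powers the whole equivalence, since the same mechanism explains both statements. By definition, the energy-momentum tensor is \emph{semi-symmetric} when $\overset{g}{\mathcal{R}}\cdot \tau = 0$, and the space-time is \emph{Ricci semi-symmetric} when $\overset{g}{\mathcal{R}}\cdot \overset{g}{R}ic = 0$; so it suffices to show that these two vanishing conditions are equivalent under Einstein's field equations (\ref{eq:EFE}).

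First I would apply the curvature operator $\overset{g}{\mathcal{R}}\cdot$ to equation (\ref{eq:EFE}). The point I want to exploit is that, for a fixed $(0,2)$-tensor $B$, the object defined in (\ref{eq:RputaB}), namely $(\overset{g}{R}(X,Y)\cdot B)(U,V) = -B(\overset{g}{R}(X,Y)U,V) - B(U,\overset{g}{R}(X,Y)V)$, involves no derivatives of $B$: it is a zeroth-order, pointwise-algebraic operation and hence $C^\infty(\mathcal{M})$-linear in its tensor argument. Consequently a scalar factor such as $\overset{g}{r}/2$ passes straight through the operator, which is exactly what lets me deal with the metric term on the left-hand side of (\ref{eq:EFE}).

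The heart of the computation is then the identity $\overset{g}{\mathcal{R}}\cdot g = 0$. Writing $g(\overset{g}{R}(X,Y)U,V)=\overset{g}{\mathcal{R}}(X,Y,U,V)$ and invoking the antisymmetry $\overset{g}{\mathcal{R}}(X,Y,U,V)=-\overset{g}{\mathcal{R}}(X,Y,V,U)$ in the last pair of arguments, the two terms defining $\overset{g}{\mathcal{R}}\cdot g$ cancel. Combined with the $C^\infty$-linearity above, this gives $\overset{g}{\mathcal{R}}\cdot(\tfrac{\overset{g}{r}}{2}g)=\tfrac{\overset{g}{r}}{2}\,\overset{g}{\mathcal{R}}\cdot g = 0$, so applying $\overset{g}{\mathcal{R}}\cdot$ to (\ref{eq:EFE}) collapses to
\begin{equation*}
    k\,(\overset{g}{\mathcal{R}}\cdot \tau) = \overset{g}{\mathcal{R}}\cdot \overset{g}{R}ic .
\end{equation*}
Since $k$ is the nonzero gravitational constant, I conclude that $\overset{g}{\mathcal{R}}\cdot \tau = 0$ holds if and only if $\overset{g}{\mathcal{R}}\cdot \overset{g}{R}ic = 0$, which is precisely the claimed equivalence.

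I do not expect a genuine obstacle: the corollary is literally the $f=0$ case of the cited theorem, whose proof runs in the same way once one also records $Q(g,g)=0$ (by the same antisymmetry of the wedge endomorphism), giving $k\,Q(g,\tau)=Q(g,\overset{g}{R}ic)$ so that the factor $f$ can be matched on both sides. The only step demanding a little care is the observation that $\overset{g}{\mathcal{R}}\cdot$ acts algebraically on its tensor slot rather than as a covariant-derivative expression; this is what justifies pulling out the scalar $\overset{g}{r}/2$ and annihilating the metric term, and everything else is routine bookkeeping.
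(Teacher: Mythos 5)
Your proposal is correct and matches the paper's route: the paper obtains this corollary precisely as the $f=0$ specialization of the preceding Mallick--De theorem, and the mechanism you spell out (substituting Einstein's field equations and using the antisymmetry $\overset{g}{\mathcal{R}}(X,Y,U,V)=-\overset{g}{\mathcal{R}}(X,Y,V,U)$ to annihilate the metric term, so that $\overset{g}{\mathcal{R}}\cdot\overset{g}{R}ic=k\,\overset{g}{\mathcal{R}}\cdot\tau$) is exactly the computation the paper carries out for its analogous theorem on the tensors $\overset{\theta}{\mathcal{R}}\cdot\tau$. Your observation that the operator acts pointwise on its tensor slot, so the non-constant factor $\overset{g}{r}/2$ passes through, is the right justification and requires no further argument.
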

	
	
	We now prove the next theorem.
	
	\begin{theorem}
		Let  $(\mathcal{M},g,\overset{1}{\nabla})$ be a $GRW$ space-time which satisfies Einstein's field equations without the cosmological constant. Then
		\begin{equation}\label{eq:Rputatau}
			\overset{\theta}{\mathcal{R}}\cdot \tau = f Q(g, \tau), \;\; \theta=0,1,4,
		\end{equation} 
		holds if and only if 
		\begin{equation}\label{eq:RputaRicg}
			\overset{\theta}{\mathcal{R}}\cdot \overset{g}{R}ic = f Q(g, \overset{g}{R}ic),
		\end{equation} 
		holds, where $f$ is an arbitrary function.
	\end{theorem}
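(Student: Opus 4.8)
The plan is to exploit the fact that both operators appearing in the statement, $\overset{\theta}{\mathcal{R}}\cdot(\,\cdot\,)$ defined by (\ref{eq:RthetaputaB}) and $Q(g,\,\cdot\,)$ defined by (\ref{eq:QTachibana}), are purely algebraic in their tensor argument and hence $C^{\infty}(\mathcal{M})$-linear in it. Since we are in dimension $n=4$ and assume Einstein's field equations (\ref{eq:EFE}) without the cosmological constant, I would first rewrite them as
\begin{equation*}
k\tau=\overset{g}{R}ic-\frac{\overset{g}{r}}{2}\,g,
\end{equation*}
expressing $\tau$ pointwise as a linear combination of $\overset{g}{R}ic$ and $g$; the coefficient $-\overset{g}{r}/(2k)$ is only a function (the scalar curvature need not be constant here), which is exactly why the algebraic, function-linear character of the two operators is essential.

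Applying $Q(g,\,\cdot\,)$ and using the standard identity $Q(g,g)=0$—immediate from (\ref{eq:QTachibana}), since the endomorphism $(X\wedge_{g}Y)$ is skew-adjoint for $g$—gives $Q(g,\tau)=\tfrac{1}{k}Q(g,\overset{g}{R}ic)$. Applying $\overset{\theta}{\mathcal{R}}\cdot(\,\cdot\,)$ gives instead
\begin{equation*}
\overset{\theta}{\mathcal{R}}\cdot\tau=\frac{1}{k}\,\overset{\theta}{\mathcal{R}}\cdot\overset{g}{R}ic-\frac{\overset{g}{r}}{2k}\,\overset{\theta}{\mathcal{R}}\cdot g,
\end{equation*}
so that (\ref{eq:Rputatau}) is equivalent to $\overset{\theta}{\mathcal{R}}\cdot\overset{g}{R}ic-\tfrac{\overset{g}{r}}{2}\overset{\theta}{\mathcal{R}}\cdot g=f\,Q(g,\overset{g}{R}ic)$. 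Thus the whole biconditional with (\ref{eq:RputaRicg}) reduces to understanding the single term $\overset{\theta}{\mathcal{R}}\cdot g$.

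For $\theta=1$ this closes immediately: $\overset{1}{\nabla}$ is a metric connection, so its curvature $(0,4)$-tensor $\overset{1}{\mathcal{R}}$ is skew in its last pair of arguments, and then $\overset{1}{\mathcal{R}}\cdot g=0$ directly from (\ref{eq:RthetaputaB}). The two displayed relations then agree up to the factor $1/k$, and (\ref{eq:Rputatau}) $\Leftrightarrow$ (\ref{eq:RputaRicg}) follows with the same $f$. The hard part will be $\theta=0$ and $\theta=4$, where the connections underlying $\overset{0}{R}$ and $\overset{4}{R}$ are not metric and $\overset{\theta}{\mathcal{R}}\cdot g$ does not obviously vanish. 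Here I would compute it from the explicit $(0,4)$ expressions (\ref{eq:R0cssLn}) and (\ref{eq:R4cssLn}): since $\overset{\theta}{\mathcal{R}}\cdot g$ only sees the part of $\overset{\theta}{\mathcal{R}}$ that is symmetric in its last two slots, the skew Riemannian part and the $g$-quadratic part drop out, leaving a scalar multiple of $Q(g,\pi\otimes\pi)$. The crux of the proof is then to show that the surviving contribution $\tfrac{\overset{g}{r}}{2}\overset{\theta}{\mathcal{R}}\cdot g$ does not obstruct the equivalence—this is where I would bring in the GRW/torse-forming identities, in particular (\ref{eq:RicPX}) and the perfect-fluid relations of Section \ref{section5}, to control $Q(g,\pi\otimes\pi)$ relative to $Q(g,\overset{g}{R}ic)$—after which the stated equivalence holds for every $\theta\in\{0,1,4\}$ with a common function $f$.
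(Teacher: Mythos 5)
Your reduction is exactly the paper's: substitute $k\tau=\overset{g}{R}ic-\frac{\overset{g}{r}}{2}g$ into both sides, use $C^{\infty}$-linearity and $Q(g,g)=0$, and observe that the whole equivalence hinges on the single term $\overset{\theta}{\mathcal{R}}\cdot g$. For $\theta=1$ your argument is complete and coincides with the paper's. The genuine gap is that you never dispose of that term for $\theta=0,4$: you correctly note that the symmetric part of $\overset{\theta}{\mathcal{R}}$ in its last two slots survives as a nonzero functional multiple of $Q(g,\Pi)$ (explicitly $\tfrac{1}{4}Q(g,\Pi)$ for $\theta=0$ and $Q(g,\Pi)$ for $\theta=4$, coming from the $\pi(Z)\bigl(\pi(Y)X-\pi(X)Y\bigr)$ terms in (\ref{eq:R0cssLn}) and (\ref{eq:R4cssLn})), and then you merely assert that GRW identities will "control" it. They will not, in the form you propose: the theorem concerns a general GRW space-time $(\mathcal{M},g,\overset{1}{\nabla})$, not a perfect fluid, so $Q(g,\Pi)$ need not be a functional multiple of $Q(g,\overset{g}{R}ic)$; and even where it is, absorbing $\frac{\overset{g}{r}}{2}\,\overset{\theta}{\mathcal{R}}\cdot g$ into the right-hand side would change $f$, whereas the statement asks for the equivalence with the same $f$.

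The paper closes precisely this point in one line, by invoking the antisymmetry $\overset{\theta}{\mathcal{R}}(X,Y,U,V)=-\overset{\theta}{\mathcal{R}}(X,Y,V,U)$ for $\theta=0,1,4$ (citing \cite{mincic1999}), which is exactly the assertion $\overset{\theta}{\mathcal{R}}\cdot g=0$. Be aware that your own computation is in direct tension with that assertion: since $Q(g,\Pi)\neq 0$ (as the paper itself remarks in Section \ref{section5}), a nonzero symmetric part proportional to $Q(g,\Pi)$ means the tensors (\ref{eq:R0cssLn}) and (\ref{eq:R4cssLn}) are \emph{not} skew in their last two arguments. So the issue you flagged is the true crux of the proof, and to finish you must either establish the antisymmetry the paper relies on (which your analysis indicates fails for $\theta=0,4$) or explicitly account for the residual term $\frac{\overset{g}{r}}{2}\,\overset{\theta}{\mathcal{R}}\cdot g$; as written, your proposal proves the statement only for $\theta=1$.
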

	\begin{proof}
		Equation (\ref{eq:Rputatau}) implies
		\begin{equation*}
			\begin{split}
				-\tau(\overset{\theta}{R} (X,Y)U,V) - \tau(U,\overset{\theta}{R} (X,Y)V)=f(&-g(Y,U)\tau(X,V)  +  g(X,U)\tau(Y,V)  \\ 
				& - g(Y,V)\tau(U,X) + g(X,V)\tau(U,Y)).
			\end{split}
		\end{equation*}
		Using (\ref{eq:EFE}), we obtain
		\begin{equation*}
			\begin{split}
				&-\overset{g}{R}ic(\overset{\theta}{R} (X,Y)U,V) + \frac{\overset{g}{r}}{2} g(\overset{\theta}{R} (X,Y)U,V) - \overset{g}{R}ic(U,\overset{\theta}{R} (X,Y)V) + \frac{\overset{g}{r}}{2} g(U,\overset{\theta}{R} (X,Y)V) \\ & =f(-g(Y,U)\overset{g}{R}ic(X,V)  +  g(X,U)\overset{g}{R}ic(Y,V)  - g(Y,V)\overset{g}{R}ic(U,X) + g(X,V)\overset{g}{R}ic(U,Y)).
			\end{split}
		\end{equation*}
		Since $\overset{\theta}{\mathcal{R}} (X,Y,U,V)=- \overset{\theta}{\mathcal{R}} (X,Y,V,U)$, $\theta=0,1,4$ (see\cite{mincic1999}), the last equation gives (\ref{eq:RputaRicg}).
	\end{proof}
	
	Based on Theorem \ref{thm:Ricipseudosimetricankonst} we can prove the following statement. 
	\begin{theorem}\label{thm:PFRputatau}
		Let $(\mathcal{M},g,\overset{1}{\nabla})$ be a perfect fluid space-time which satisfies Einstein's field equations without the cosmological constant. Then the relation
		\begin{equation*}
			\overset{g}{\mathcal{R}}\cdot \tau = Q(g, \tau)
		\end{equation*} 
		holds.
	\end{theorem}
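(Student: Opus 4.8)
The plan is to reduce the claim for $\tau$ to the identity for $\overset{g}{R}ic$ already established in Theorem \ref{thm:Ricipseudosimetricankonst}, exploiting the fact that Einstein's field equations express $\tau$ as a linear combination of $\overset{g}{R}ic$ and $g$ with scalar coefficients. First I would rewrite equation (\ref{eq:EFE}) in the form
\begin{equation*}
	k\tau = \overset{g}{R}ic - \frac{\overset{g}{r}}{2}\,g.
\end{equation*}
The key observation is that both operations $B\mapsto \overset{g}{\mathcal{R}}\cdot B$ and $B\mapsto Q(g,B)$, as defined in (\ref{eq:RputaB}) and (\ref{eq:QTachibana}), are purely algebraic (pointwise) and $C^\infty(\mathcal{M})$-linear in the tensor argument $B$: they merely substitute $\overset{g}{R}(X,Y)X_i$ or $(X\wedge_{g} Y)X_i$ into one slot of $B$ and involve no covariant derivative of $B$. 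Consequently any scalar function multiplying $B$ — in particular the (generally non-constant) factor $\frac{\overset{g}{r}}{2}$ — simply factors out.

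Applying $\overset{g}{\mathcal{R}}\cdot$ and $Q(g,\cdot)$ to the field equations then yields
\begin{equation*}
	k\,(\overset{g}{\mathcal{R}}\cdot\tau) = \overset{g}{\mathcal{R}}\cdot\overset{g}{R}ic - \frac{\overset{g}{r}}{2}\,(\overset{g}{\mathcal{R}}\cdot g), \qquad k\,Q(g,\tau) = Q(g,\overset{g}{R}ic) - \frac{\overset{g}{r}}{2}\,Q(g,g).
\end{equation*}
I would then show that the two correction terms vanish: $\overset{g}{\mathcal{R}}\cdot g = 0$ because $\overset{g}{R}(X,Y)$ is skew-symmetric with respect to $g$ (the Levi-Civita connection preserves the metric), and $Q(g,g)=0$ by a direct cancellation in (\ref{eq:QTachibana}). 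Invoking Theorem \ref{thm:Ricipseudosimetricankonst}, which asserts $\overset{g}{\mathcal{R}}\cdot\overset{g}{R}ic = Q(g,\overset{g}{R}ic)$ for a perfect fluid space-time, the two right-hand sides coincide, so $k\,(\overset{g}{\mathcal{R}}\cdot\tau) = k\,Q(g,\tau)$; dividing by the non-zero gravitational constant $k$ gives $\overset{g}{\mathcal{R}}\cdot\tau = Q(g,\tau)$.

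There is no substantial obstacle here; the argument is essentially a linear-algebra manipulation riding on Theorem \ref{thm:Ricipseudosimetricankonst}. The only point requiring care is confirming that the scalar curvature $\overset{g}{r}$ — which is not constant for a perfect fluid space-time with a semi-symmetric metric $P$-connection — may be pulled through the operations unchanged. This is guaranteed precisely because neither $\overset{g}{\mathcal{R}}\cdot(\cdot)$ nor $Q(g,\cdot)$ differentiates its tensor argument, so the non-constancy of $\overset{g}{r}$ is irrelevant once the two auxiliary identities $\overset{g}{\mathcal{R}}\cdot g = 0$ and $Q(g,g)=0$ are in hand.
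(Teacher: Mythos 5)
Your proposal is correct and follows essentially the same route as the paper: the paper also derives this result by substituting $k\tau = \overset{g}{R}ic - \tfrac{\overset{g}{r}}{2}g$ from the field equations, letting the metric terms drop out via the skew-symmetry of the curvature operator (equivalently $\overset{g}{\mathcal{R}}\cdot g = 0$ and $Q(g,g)=0$), and then invoking Theorem \ref{thm:Ricipseudosimetricankonst}. Your explicit verification of the pointwise linearity and of the two vanishing identities is a slightly more detailed rendering of the same argument.
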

	
	The energy-momentum tensor for a perfect fluid space-time has the form
	\begin{equation}\label{eq:energy-momentum}
		\tau = p g + (\sigma+p) \pi \otimes \pi,
	\end{equation}
	where $\sigma$ is the \textit{energy density} and $p$ is the \textit{isotropic pressure}, where $\sigma+p\ne 0$ and $\sigma>0$ (see pp. 61-63 in \cite{duggal1999} or pp. 61. in \cite{stephani2009}). The relationship between pressure and energy density defines the \textit{equation of state}. The equation of state for \textit{dark energy} can be described by the relation $\frac{p}{\sigma}=\varpi$, for $\varpi<-\frac{1}{3}$, while for $\varpi<-1$ we have \textit{phantom dark energy} (for example, see \cite{bhar2021}). If $\sigma=p$, then we have a \textit{stiff matter fluid} (pp. 66. in \cite{stephani2009}).  It was shown in \cite{mantica2016} that an $n$-dimensional Lorentzian manifold ($n\geq 3$) whose Ricci tensor has the form $\overset{g}{R}ic=-\overset{g}{r}\pi\otimes\pi$ represents a stiff matter fluid. This motivates the following theorem.

	\begin{theorem}
		A GRW space-time $(\mathcal{M},g,\overset{1}{\nabla})$ satisfying relation $ \overset{0}{\mathcal{R}}\cdot \overset{0}{R}ic =0$ is a stiff matter fluid with respect to $\overset{0}{\nabla}$,  where $\overset{0}{\nabla}$ is associated symmetric connection of $\overset{1}{\nabla}$.
	\end{theorem}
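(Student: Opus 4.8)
The plan is to reduce the curvature hypothesis to an Einstein condition and then simply read off the Ricci tensor of the symmetric connection. First I would invoke Theorem \ref{thm:RalphaputaRic=0} with $\alpha=0$: the assumption $\overset{0}{\mathcal{R}}\cdot\overset{0}{R}ic=0$ is precisely the condition shown there to be equivalent to the space-time being Einstein, so equation (\ref{eq:Ajnstajn}) yields $\overset{g}{R}ic=(n-1)g$. This is the only point at which the full strength of the hypothesis is consumed, and it has effectively been established already.

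Next I would substitute this Einstein identity into the relation (\ref{eq:Ric0cssLn}) linking $\overset{0}{R}ic$ and $\overset{g}{R}ic$. With $\overset{g}{R}ic=(n-1)g$, the $(n-1)g$ exactly cancels the $4g$-term, leaving $\overset{0}{R}ic=-\frac{n-1}{4}\,\pi\otimes\pi$. Hence the Ricci tensor of $\overset{0}{\nabla}$ is already of the pure $\pi\otimes\pi$ form characteristic of stiff matter, and no further geometric input is needed to reach this shape.

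The remaining step is to identify the coefficient $-\frac{n-1}{4}$ with $-\overset{0}{r}$, so that the relation takes the form $\overset{0}{R}ic=-\overset{0}{r}\,\pi\otimes\pi$ required by the stiff-matter characterization of \cite{mantica2016}, now read with respect to $\overset{0}{\nabla}$ in place of the Levi-Civita connection. To this end I would compute $\overset{0}{r}$ from (\ref{eq:r0css}), inserting the values $\omega=1$ and $\pi(P)=g(P,P)=-1$ fixed in Section \ref{section3}, together with the Einstein scalar curvature $\overset{g}{r}=n(n-1)$ obtained as the trace of $(n-1)g$. A short simplification gives $\overset{0}{r}=\frac{n-1}{4}$, so that $\overset{0}{R}ic=-\overset{0}{r}\,\pi\otimes\pi$, and the conclusion follows from \cite{mantica2016} applied to $\overset{0}{\nabla}$.

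There is no serious obstacle: the structural content is carried entirely by Theorem \ref{thm:RalphaputaRic=0}, and everything afterward is bookkeeping with the already-derived relations (\ref{eq:Ric0cssLn}) and (\ref{eq:r0css}). The one point demanding care is the arithmetic in the scalar-curvature computation, where one must confirm that (\ref{eq:r0css}) produces exactly $\overset{0}{r}=\frac{n-1}{4}$ and not merely a quantity proportional to it, since the stiff-matter characterization requires the precise equality $\overset{0}{R}ic=-\overset{0}{r}\,\pi\otimes\pi$ rather than only proportionality $\overset{0}{R}ic\propto\pi\otimes\pi$.
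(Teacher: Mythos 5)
Your proposal is correct and follows essentially the same route as the paper: invoke Theorem \ref{thm:RalphaputaRic=0} to get $\overset{g}{R}ic=(n-1)g$, substitute into (\ref{eq:Ric0cssLn}) to obtain $\overset{0}{R}ic=-\frac{n-1}{4}\pi\otimes\pi$, and identify the coefficient with $-\overset{0}{r}$. The only cosmetic difference is that you compute $\overset{0}{r}$ from (\ref{eq:r0css}) while the paper simply traces the derived form of $\overset{0}{R}ic$ (using $\pi(P)=-1$); both give $\overset{0}{r}=\frac{n-1}{4}$ (i.e.\ $\frac{3}{4}$ for $n=4$), so the argument goes through.
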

	\begin{proof}
		If $ \overset{0}{\mathcal{R}}\cdot \overset{0}{R}ic =0$ holds, then the observed manifold is Einstein (see Theorem \ref{thm:RalphaputaRic=0}) satisfying relation (\ref{eq:Ajnstajn}). Substituting the equation (\ref{eq:Ajnstajn}) into (\ref{eq:Ric0cssLn}) yields
		\begin{equation*}
			\overset{0}{R}ic=-\frac{3}{4} \pi\otimes\pi. 
		\end{equation*}
		This implies $\overset{0}{r}=\frac{3}{4}$, so the previous equation can be written in the form
		\begin{equation*}
			\overset{0}{R}ic=-\overset{0}{r}\pi\otimes\pi.
		\end{equation*}
		Thus, the GRW space-time $(\mathcal{M},g,\overset{1}{\nabla})$  is a stiff matter fluid with respect to the connection $\overset{0}{\nabla}$.
	\end{proof}
	
	Based on equations (\ref{eq:EFE}) and (\ref{eq:energy-momentum}) we get
	\begin{equation*}
		\overset{g}{R}ic= \frac{1}{2}(\overset{g}{r} + 2k p)g+ k(\sigma+p) \pi\otimes\pi.
	\end{equation*}
	Contracting the previous equation gives the scalar curvature $\overset{g}{r} = k(\sigma -3p)$, so the Ricci tensor of a perfect fluid space-time has the form
	\begin{equation}\label{eq:RicPF}
		\overset{g}{R}ic=\frac{k(\sigma-p)}{2}g+k(\sigma+p)\pi\otimes\pi.
	\end{equation}
	
	Since $a-b=3$ holds in a four-dimensional perfect fluid space-time with a semi-symmetric metric $P$-connection (see equation (\ref{eq:a-b})), we obtain
	\begin{equation}\label{eq:sigma+3rho<0}
		k(\sigma + 3p)=-6.
	\end{equation}
	Considering that $k>0$, it follows
	\begin{equation*}
		\sigma + 3p<0.
	\end{equation*}
	This means that the \textit{strong energy condition} is violated, for which $\sigma + 3p\geq 0$ and $\sigma + p\geq 0$ should apply.
	
	\begin{theorem}\label{thm:SEC}
		In a perfect fluid space-time $(\mathcal{M},g,\overset{1}{\nabla})$ which satisfies the Einstein's field equations without the cosmological constant, the strong energy condition is violated.
	\end{theorem}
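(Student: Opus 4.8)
The plan is to feed the explicit perfect-fluid form of the energy--momentum tensor into the field equations, reduce via a trace, and then combine the resulting Ricci coefficients with the rigid connection constraint $a-b=n-1$ established earlier in the paper.

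First I would substitute the perfect-fluid energy--momentum tensor (\ref{eq:energy-momentum}) into Einstein's field equations (\ref{eq:EFE}) and solve for $\overset{g}{R}ic$, obtaining an expression of the shape $\overset{g}{R}ic=\tfrac12(\overset{g}{r}+2kp)g+k(\sigma+p)\,\pi\otimes\pi$. Contracting this with the metric in dimension $n=4$ (using $\pi(P)=-1$, so that the trace of $\pi\otimes\pi$ equals $-1$) yields the scalar curvature $\overset{g}{r}=k(\sigma-3p)$. Reinserting this value puts the Ricci tensor into the canonical perfect-fluid form (\ref{eq:PF}), namely $\overset{g}{R}ic=\tfrac{k(\sigma-p)}{2}g+k(\sigma+p)\,\pi\otimes\pi$, so that the scalar coefficients are $a=\tfrac{k(\sigma-p)}{2}$ and $b=k(\sigma+p)$.

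The decisive step is then to invoke relation (\ref{eq:a-b}), which for the semi-symmetric metric $P$-connection and $n=4$ reads $a-b=3$. Substituting the values of $a$ and $b$ and simplifying forces $k(\sigma+3p)=-6$; since the gravitational constant obeys $k>0$, this gives $\sigma+3p<0$. Because the strong energy condition for a perfect fluid requires $\sigma+3p\ge 0$ (together with $\sigma+p\ge 0$), the derived strict inequality contradicts it, completing the argument.

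The computation itself is routine; the real content---and the only place where anything could go wrong---is the use of the equality $a-b=n-1$. This is not a sign hypothesis but an exact identity dictated purely by the $P$-connection structure, and it is precisely its rigidity that, once passed through the trace-reduced field equations, pins the sign of $\sigma+3p$ unconditionally. I would therefore be careful to confirm that (\ref{eq:a-b}) is applicable in the present setting (a perfect fluid space-time carrying the connection $\overset{1}{\nabla}$), since the entire conclusion hinges on it.
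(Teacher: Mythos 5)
Your proposal is correct and follows essentially the same route as the paper: substitute the perfect-fluid energy--momentum tensor into the field equations, take the trace in $n=4$ to get $\overset{g}{r}=k(\sigma-3p)$, read off $a=\tfrac{k(\sigma-p)}{2}$ and $b=k(\sigma+p)$, and combine with the connection identity $a-b=3$ to force $k(\sigma+3p)=-6$ and hence $\sigma+3p<0$. The caution you flag about the applicability of $a-b=n-1$ is well placed, and the paper does use exactly that relation (established in the cited work for perfect fluid space-times with a semi-symmetric metric $P$-connection) as the decisive step.
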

	

	\begin{remark}
		Theorem \ref{thm:SEC} can be considered as a consequence of Theorem \ref{thm:Ricipseudosimetricankonst} herein and Theorem II. 2.(ii) from \cite{mallickde2016}.
	\end{remark}
	
	\begin{remark}
		Equation (\ref{eq:sigma+3rho<0}) was also derived in \cite{yilmaz2023} for a pseudo $Z$-symmetric space-time.
	\end{remark}

	We now prove the following theorems.

	\begin{theorem}
		If a perfect fluid space-time $(\mathcal{M},g,\overset{1}{\nabla})$ which satisfies the Einstein's field equations without the cosmological constant is a Ricci semi-symmetric manifold, then the equation of state represents a phantom barrier.
	\end{theorem}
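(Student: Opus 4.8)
The plan is to exploit the standing convention that $(\mathcal{M},g,\overset{1}{\nabla})$ is a GRW space-time, so that the Ricci semi-symmetry hypothesis can be fed into Theorem \ref{thm:GRWRicipolu-sim}. First I would invoke that theorem: since the perfect fluid space-time is assumed to be Ricci semi-symmetric, it must be Einstein, whence the Levi-Civita Ricci tensor takes the form (\ref{eq:Ajnstajn}), namely $\overset{g}{R}ic=(n-1)g$. As the relativistic discussion of this section fixes the dimension at $n=4$, this reads $\overset{g}{R}ic=3g$, a pure multiple of the metric with no $\pi\otimes\pi$ contribution.

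The second step is to confront this Einstein condition with the perfect fluid form of the Ricci tensor. Substituting the energy-momentum tensor (\ref{eq:energy-momentum}) into Einstein's field equations (\ref{eq:EFE}) produces the Ricci tensor (\ref{eq:RicPF}), in which the coefficient of $g$ is $\tfrac12 k(\sigma-p)$ and the coefficient of $\pi\otimes\pi$ is $k(\sigma+p)$. Comparing this with $\overset{g}{R}ic=3g$ and matching the $\pi\otimes\pi$ terms forces $k(\sigma+p)=0$; since the gravitational constant satisfies $k>0$, I would conclude $\sigma+p=0$, i.e. $p=-\sigma$.

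Finally I would read this off as a statement about the equation of state: dividing by the positive energy density $\sigma$ gives $\varpi=p/\sigma=-1$, which is exactly the phantom barrier separating dark energy ($\varpi<-\tfrac13$) from phantom dark energy ($\varpi<-1$). As an internal check, the $g$-coefficient then collapses to $k\sigma=3$, consistent with (\ref{eq:sigma+3rho<0}) after substituting $p=-\sigma$.

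I do not anticipate a genuine obstacle, as the argument is a short chain linking Theorem \ref{thm:GRWRicipolu-sim} to the algebraic structure of the perfect fluid Ricci tensor. The two points demanding attention are the correct application of the Ricci-semi-symmetric $\Leftrightarrow$ Einstein equivalence (whose proof leans on the eigenvector role of the timelike generator $P$) and carrying the dimension $n=4$ faithfully through the coefficient comparison. One may alternatively bypass the explicit Einstein step by imposing $\overset{g}{\mathcal{R}}\cdot\overset{g}{R}ic=0$ in the perfect fluid identity $\overset{g}{\mathcal{R}}\cdot\overset{g}{R}ic=(\tfrac{\overset{g}{r}}{n-1}-n)\,Q(g,\pi\otimes\pi)$ that follows from Theorem \ref{thm:Ricipseudosimetricankonst}, and then using $Q(g,\pi\otimes\pi)\neq 0$ to force $\overset{g}{r}=n(n-1)$, which again yields the phantom value $\varpi=-1$.
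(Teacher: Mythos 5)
Your argument is correct and coincides with the paper's own proof: both invoke Theorem \ref{thm:GRWRicipolu-sim} to get $\overset{g}{R}ic=3g$ for $n=4$ and then compare the $\pi\otimes\pi$ coefficient with (\ref{eq:RicPF}) to obtain $k(\sigma+p)=0$, hence $p=-\sigma$. The consistency check against (\ref{eq:sigma+3rho<0}) and the alternative route via Theorem \ref{thm:Ricipseudosimetricankonst} are sound additions but not needed.
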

	\begin{proof}
		If a perfect fluid space-time $(\mathcal{M},g,\overset{1}{\nabla})$ is Ricci semi-symmetric, then for $n=4$ the Ricci tensor has the form
		\begin{equation*}
			\overset{g}{R}ic = 3g,
		\end{equation*}
		where we take into consideration Theorem \ref{thm:GRWRicipolu-sim}. Comparing the previous equation with (\ref{eq:RicPF}) yields $k(\sigma+p)=0$, i.e. $\sigma+p=0$.
	\end{proof}

	\begin{theorem}
		If, in a perfect fluid space-time $(\mathcal{M},g,\overset{1}{\nabla})$ which satisfies the Einstein's field equations without the cosmological constant, either $ \overset{\alpha}{\mathcal{R}}\cdot \overset{\alpha}{R}ic=0$, $\alpha=0,4$, or $ \overset{5}{\mathcal{R}}\cdot \overset{5}{R}ic $ $=\overset{g}{\mathcal{R}}\cdot \overset{g}{R}ic$ holds, then the equation of state represents a phantom barrier.
	\end{theorem}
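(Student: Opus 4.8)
The plan is to observe that both disjunctive hypotheses funnel through the same intermediate conclusion—that the manifold is Einstein—so the argument reduces to the coefficient comparison already carried out in the preceding (Ricci semi-symmetric) theorem. First I would invoke the characterization results already established in Section \ref{section4}: if $\overset{\alpha}{\mathcal{R}}\cdot \overset{\alpha}{R}ic=0$ for $\alpha=0,4$, then Theorem \ref{thm:RalphaputaRic=0} yields that $(\mathcal{M},g,\overset{1}{\nabla})$ is an Einstein manifold, while if $\overset{5}{\mathcal{R}}\cdot \overset{5}{R}ic=\overset{g}{\mathcal{R}}\cdot \overset{g}{R}ic$, then Theorem \ref{thm:R5Ric5=RgRicg} gives the same conclusion. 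In either case the Einstein condition (\ref{eq:Ajnstajn}) holds, which for the dimension $n=4$ under consideration becomes $\overset{g}{R}ic=3g$.

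Next I would compare this with the explicit form (\ref{eq:RicPF}) of the Ricci tensor of a perfect fluid space-time obeying Einstein's field equations without the cosmological constant, namely
\[
\overset{g}{R}ic=\frac{k(\sigma-p)}{2}g+k(\sigma+p)\pi\otimes\pi.
\]
Since $\overset{g}{R}ic=3g$ carries no $\pi\otimes\pi$ component, matching the coefficients of $\pi\otimes\pi$ forces $k(\sigma+p)=0$. Recalling that the gravitational constant satisfies $k>0$, this immediately gives $\sigma+p=0$, that is $p=-\sigma$, so the equation-of-state parameter is $\varpi=\frac{p}{\sigma}=-1$. This is precisely the phantom barrier separating quintessence from phantom dark energy, which is the desired conclusion; for completeness one may also record from the $g$-coefficient that $k(\sigma-p)=6$.

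I do not expect any genuine obstacle here: the statement is a direct corollary of Theorems \ref{thm:RalphaputaRic=0} and \ref{thm:R5Ric5=RgRicg} combined with the one-line coefficient comparison against (\ref{eq:RicPF}), exactly as in the proof of the preceding theorem for the Ricci semi-symmetric hypothesis. The only point requiring minor care is to treat the two alternative hypotheses uniformly by routing both through the Einstein condition $\overset{g}{R}ic=3g$, after which the comparison with (\ref{eq:RicPF}) is identical for both cases.
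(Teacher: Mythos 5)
Your proposal is correct and follows exactly the route the paper intends: both hypotheses reduce to the Einstein condition $\overset{g}{R}ic=(n-1)g=3g$ via Theorems \ref{thm:RalphaputaRic=0} and \ref{thm:R5Ric5=RgRicg}, and comparison with (\ref{eq:RicPF}) gives $k(\sigma+p)=0$, hence $\sigma+p=0$, just as in the paper's proof of the preceding Ricci semi-symmetric case.
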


	\section{Discussion and conclusions}

		Following the publication of paper  \cite{chen2014}, the investigation of GRW space-times through various vector fields and geometric structures has become a significant area of research. In this paper, we have shown that a Lorentzian manifold equipped with a concircularly semi-symmetric metric connection, with a unit timelike vector, reduces to a GRW space-time. Specifically, we demonstrated that the generator of the mentioned connection becomes a unit timelike torse-forming vector (\ref{eq:Ptorse-forming}). The idea of the authors was to connect the generator of the mentioned connection with the vector on the basis of which the necessary and sufficient conditions for GRW space-times were obtained. 
		Studying Lorentzian manifolds equipped with a concircularly semi-symmetric metric connection is effectively equivalent to studying such manifolds that admit a unit time-like torse-forming vector field, as defined in equation (\ref{eq:Ptorse-forming}), with respect to the Levi-Civita connection. In this context, it is not necessary to explicitly reference the concircularly semi-symmetric metric connection or the semi-symmetric metric $P$-connection, since the same geometric structures and results can be obtained solely through the Levi-Civita connection and the associated unit time-like torse-forming vector field (\ref{eq:Ptorse-forming}). However, this equivalence does not extend to the case of the semi-symmetric metric connection in general, as its defining vector field need not be torse-forming.
		
		Given the numerous literature on vector fields with related properties on Lorentzian manifolds, further exploration of the geometric and physical characteristics of these manifolds is warranted. Future work may deal with the perfect fluid space-times with this connection, including their application to general relativity, considering both the cases of Einstein's field equations with and without the cosmological constant. 
		A promising direction for future research is to investigate the observed connection within the framework of the theory of relativity, focusing on formulations that involve the torsion tensor. In particular, one could examine Einstein’s field equations expressed in terms of the Ricci tensor relative to a non-symmetric connection, following the approaches in \cite{kranas2019,csillag2024}. Building on \cite{csillag2024} which analyzed generalized Einstein's field equations incorporating the symmetric part of the Ricci tensor for a semi-symmetric metric connection, it would be of interest to begin with a concircularly semi-symmetric metric connection. This approach would lead to a simpler form of the generalized Einstein’s field equations and recover a special case of the results presented in \cite{csillag2024}.

	\section{Acknowledgement}
	
	The authors thanks the anonymous referees for their suggestions and comments that helped to improve the paper.
	
	Miroslav D. Maksimovi\'c was partially supported by the Ministry of Education, Science and Technological Development of the Republic of Serbia, project no. 451-03-137/2025-03/200123, by the project of Faculty of Sciences and Mathematics, University of Pri\v stina in Kosovska Mitrovica (internal-junior project IJ-2303) and by the Bulgarian Ministry of Education and Science, Scientific Programme "Enhancing the Research Capacity in Mathematical Sciences (PIKOM)", No. DO1-67/05.05.2022. Milan Lj. Zlatanović was partially supported by a grant from the IMU-CDC and the Simons Foundation, by the Ministry of Education, Science and Technological Development of the Republic of Serbia (project no. 451-03-137/2025-03/200124), and by the Bulgarian Ministry of Education and Science, Scientific Programme “Enhancing the Research Capacity in Mathematical Sciences (PIKOM)” (No. DO1-67/05.05.2022). M.Z. also thanks IMI-BAS, Sofia, for providing support and excellent research conditions during the final stage of preparing the paper.

	Miroslav D. Maksimovi\'c \\
	{University of Pri\v stina in Kosovska Mitrovica, Faculty of Sciences and Mathematics, Department of Mathematics, Kosovska Mitrovica, Serbia, and	\\
		Institute of Mathematics and Informatics, Bulgarian Academy of Sciences, Sofia 1113, Acad. G. Bonchev Str., Bl. 8, Bulgaria}
	\\
	email: miroslav.maksimovic@pr.ac.rs
	\\
	
	Milan Lj. Zlatanovi\'c \\
	{University of Ni\v s, Faculty of Sciences and Mathematics, Department of Mathematics, Ni\v s, Serbia,}
	\\
	email: zlatmilan@yahoo.com
	\\
	
	Milica R. Vu\v{c}urovi\'c \\
	{University of Pri\v stina in Kosovska Mitrovica, Faculty of Sciences and Mathematics, Department of Mathematics, Kosovska Mitrovica, Serbia,}
	\\
	email: milica.vucurovic@pr.ac.rs

\end{document}